\newtheorem{thm}{Theorem}[section]
\newtheorem{prop}[thm]{Proposition}
\newtheorem{lem}[thm]{Lemma}
\newtheorem{lem-def}[thm]{Lemma-Definition}
\newtheorem{cor}[thm]{Corollary}
\theoremstyle{remark}
\newtheorem{exam}[thm]{Example}
\newtheorem{rmk}{Remark}[section]
\theoremstyle{definition}
\newtheorem{dfn}[thm]{Definition}
\numberwithin{equation}{section}
\newcommand{\frakm}{{\mathfrak m}}
\newcommand{\frakB}{{\mathfrak B}}
\newcommand{\frakS}{{\mathfrak S}}
\newcommand{\frakX}{{\mathfrak X}}
\newcommand{\bA}{{\mathbb A}}
\newcommand{\bC}{{\mathbb C}}
\newcommand{\bL}{{\mathbb L}}
\newcommand{\bM}{{\mathbb M}}
\newcommand{\bQ}{{\mathbb Q}}
\newcommand{\bZ}{{\mathbb Z}}
\newcommand{\calE}{{\mathcal E}}
\newcommand{\calL}{{\mathcal L}}
\newcommand{\calM}{{\mathcal M}}
\newcommand{\calO}{{\mathcal O}}
\newcommand{\rA}{{\mathrm A}}
\newcommand{\rC}{{\mathrm C}}
\newcommand{\rH}{{\mathrm H}}
\newcommand{\rM}{{\mathrm M}}
\newcommand{\rW}{{\mathrm W}}
\newcommand{\Zp}{{\bZ_p}}
\newcommand{\Cp}{{\bC_p}}
\newcommand{\Ainf}{{\mathrm{A_{inf}}}}
\newcommand{\AAinf}{{\bA_{\mathrm{inf}}}}            
\newcommand{\Spa}{{\mathrm{Spa}}}           
\newcommand{\Spf}{{\mathrm{Spf}}}           
\newcommand{\Spec}{{\mathrm{Spec}}}         
\newcommand{\colim}{{\mathrm{colim}}}
\newcommand{\Ker}{{\mathrm{Ker}}}           
\newcommand{\Ima}{{\mathrm{Im}}}            
\newcommand{\End}{{\mathrm{End}}}           
\newcommand{\ev}{{\mathrm{ev}}}             
\newcommand{\Crys}{{\mathrm{Crys}}}
\newcommand{\et}{{\mathrm{\text{\'e}t}}}    
\newcommand{\Et}{\mathrm{\text{\'E}t}}
\newcommand{\proet}{{\mathrm{pro\text{\'e}t}}}
\newcommand{\perf}{\mathrm{perf}}           
\DeclareSymbolFontAlphabet{\mathbb}{AMSb} 
\DeclareSymbolFontAlphabet{\mathbbl}{bbold}
\newcommand{\Prism}{{\mathlarger{\mathbbl{\Delta}}}}
\newcommand{\id}{{\mathrm{id}}}             
\newcommand{\IF}{{\mathrm{if}}}
\begin{document}
	\title{Relative $(\varphi,\Gamma)$-modules and prismatic $F$-crystals}
	
	\author{Yu Min}
	\thanks{Y.M. $\bf Email$: yu.min@amss.ac.cn. $\bf Address$: Morningside Center of Mathematics No.505, Chinese Academy of Sciences, ZhongGuancun East Road 55, Beijing, 100190, China.}

	\author{Yupeng Wang}
	\thanks{Y.W. $\bf Email$: wangyupeng@amss.ac.cn. $\bf Address$: Morningside Center of Mathematics No.505, Chinese Academy of Sciences, ZhongGuancun East Road 55, Beijing, 100190, China.}

	\date{\today}
	\maketitle

	\begin{abstract}
		In this paper, we prove that for any $p$-adic smooth separated formal scheme $\frakX$, the category of prismatic $F$-crystals over $\calO_{\Prism}[\frac{1}{I}]^{\wedge}_p$ is equivalent to the category of \'etale $\bZ_p$-local systems on the generic fiber of $\frakX$. We then compare the cohomology of the corresponding coefficients.
	\end{abstract}
	
	\tableofcontents
	
	\section{Introduction}\label{Sec-Introduction}
	In rational $p$-adic Hodge theory, there is the classical theorem due to Colmez--Fontaine that the category ${\rm Rep}_{\bQ_p}^{\rm cris}(G_K)$ of crystalline representations of the absolute Galois group $G_K$ of a $p$-adic field $K$, is equivalent to the category ${\rm MF}^{\varphi}_K$ of weakly admissible filtered $\varphi$-modules. It is tempting to ask for an integral version of this result. In fact, one also hopes to use some semilinear algebras to describe crystalline $\bZ_p$-representations. There are two ways to deal with this question. One is the cyclotomic case: there is the category of $(\varphi, \Gamma)$-modules which is equivalent to the category ${\rm Rep}_{\bZ_p}^{\rm cris}(G_K)$. The other is the Kummer case: there are the theories of Breuil modules and Breuil--Kisin modules which however can not describe the whole category ${\rm Rep}_{\bZ_p}^{\rm cris}(G_K)$ in general.  Combining these two points of view, there is the theory of $({\varphi, \hat G})$-modules which also describes the whole category ${\rm Rep}_{\bZ_p}^{\rm cris}(G_K)$.

	On the other hand, integral $p$-adic Hodge theory has seen great developements recently. In \cite{BS}, Bhatt and Scholze introduced the theory of prismatic cohomology, which unifies its precedents: the $\AAinf$-cohomology theory in \cite{BMS-a} and the Breuil--Kisin cohomology in \cite{BMS-b}. The latter two originate from the seeking of a cohomological construction of the Breuil--Kisin modules. It turns out that the prismatic formalism contains finer integral information than the Breuil--Kisin theory.  In \cite{BS1}, Bhatt and Scholze proved that the category of prismatic $F$-crystals over $\calO_{\Prism}$ (resp. ${\calO_{\Prism}[\frac{1}{I}]^{\wedge}_p}$) on the absolute prismatic site $(\calO_K)_{\Prism}$, where $\calO_K$ is the ring of integers in a $p$-adic field $K$, is equivalent to the category ${\rm Rep}_{\bZ_p}^{\rm cris}(G_K)$ (resp. ${\rm Rep}_{\bZ_p}(G_K)$). The equivalence between the category of prismatic $F$-crystals and the category ${\rm Rep}_{\bZ_p}(G_K)$ has also been proved in \cite{Wu}.
	
	In this paper, we want to generalize the work of Wu to the relative case, i.e. compare prismatic $F$-crystals and relative $(\varphi,\Gamma)$-modules. More presicely, we prove the following theorem.
	
	\begin{thm}[Theorem \ref{local system}]
		Let $\frakX$ be a separated $p$-adic smooth formal scheme over $\calO_K$ and $X$ be its adic generic fiber. Then there is an equivalence of categories
		\[
		{\rm Crys}(\calO_{\Prism}[\frac{1}{I}]^{\wedge}_p, \varphi)\xrightarrow{\simeq} {\rm LS}(X_{\rm \acute et}, \bZ_p),
		\]
		where the right one is the category of \'etale $\bZ_p$-local systems on $X$.
	\end{thm}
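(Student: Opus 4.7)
The plan is to adapt the method of Bhatt--Scholze and Wu to the relative setting by passing to a perfectoid Galois cover of $\frakX$ and expressing Laurent $F$-crystals in terms of étale $(\varphi,\Gamma)$-modules. Since both categories satisfy étale descent on $\frakX$, one first reduces to the affine case $\frakX=\Spf R$ with $R$ a small smooth $\calO_K$-algebra equipped with a framing $\calO_K\langle T_1^{\pm 1},\ldots,T_d^{\pm 1}\rangle\to R$. Adjoining compatible $p$-power roots of the coordinates $T_i$ together with $p$-power roots of unity produces a perfectoid Galois cover $R_\infty$ of $R$ with Galois group $\Gamma_R\cong\Gamma_K\ltimes\bZ_p(1)^d$, and the Ainf-prism $(\AAinf(R_\infty),(\xi))$ sits inside the absolute prismatic site of $\frakX$.

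Given a Laurent $F$-crystal $\calE$, evaluation on this prism produces a finite projective module $M$ over $\AAinf(R_\infty)[\tfrac{1}{I}]^{\wedge}_p$ equipped with a Frobenius isomorphism $\varphi_M$ and a continuous semilinear $\Gamma_R$-action commuting with $\varphi_M$. By the Kedlaya--Liu correspondence, this étale $\varphi$-module corresponds to a $\bZ_p$-local system on the perfectoid adic space $\Spa(R_\infty[\tfrac{1}{p}],R_\infty)_{\et}$, and the $\Gamma_R$-action constitutes pro-étale descent data down to $X_{\et}$. This yields the functor from Laurent $F$-crystals to $\bZ_p$-local systems; full faithfulness follows because morphisms on both sides are recovered as $\Gamma_R$-invariant, $\varphi$-equivariant module homomorphisms.

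Conversely, given an étale $\bZ_p$-local system $\bL$ on $X$, pulling back to $X_\infty$ and applying the inverse Kedlaya--Liu functor produces a $\Gamma_R$-equivariant étale $\varphi$-module $M$ over $\AAinf(R_\infty)[\tfrac{1}{I}]^{\wedge}_p$. The essential step is then to promote this datum from a module on a single prism to a crystal on the whole absolute prismatic site. This is done by flat descent along the cover provided by $(\AAinf(R_\infty),\xi)$: one identifies the self-products of the Ainf-prism in the relative prismatic site with rings assembled from the Čech nerve of $R_\infty/R$, and verifies that the $\Gamma_R$-action on $M$ supplies exactly the required descent datum. Étale gluing over $\frakX$ then upgrades the resulting local equivalences to the claimed global one.

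The principal obstacle is this descent step: one must carefully identify the self-products of $(\AAinf(R_\infty),\xi)$ in the relative prismatic site and confirm that Laurent $F$-crystals genuinely satisfy descent along this cover, with descent data matching $\Gamma_R$-actions. A secondary technical issue is to ensure that the Kedlaya--Liu equivalence is sufficiently functorial in the perfectoid base to glue the local constructions across different charts of $\frakX$, and that finite projectivity together with the crystal property is preserved throughout.
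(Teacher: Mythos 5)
Your overall route is the one the paper takes: localize to a framed small affine, use the perfectoid Galois cover $R_\infty$ and the prism $(\Ainf(R_\infty),(\xi))$ as a cover of the final object of the prismatic topos, identify Laurent $F$-crystals with $\Gamma$-equivariant étale $\varphi$-modules over $\widehat{\Ainf(R_\infty)[\frac{1}{\xi}]}$, and then pass to $\bZ_p$-local systems via Kedlaya--Liu. However, the step you yourself flag as the ``principal obstacle'' is exactly where the real mathematical content lies, and your proposal does not supply the ideas needed to close it. The $(i+1)$-fold self-products of $(\Ainf(R_\infty),(\xi))$ in the prismatic site are \emph{not} rings of the form $\rC(\Gamma^i,-)$ assembled from the Čech nerve of $R_\infty/R$: they are the initial prisms (prismatic envelopes, via \cite[Proposition 7.2]{BS}) of the quasi-regular semiperfectoid rings $R_\infty\widehat\otimes_R\cdots\widehat\otimes_R R_\infty$, and for $i\geq 1$ they are imperfect and have no direct group-theoretic description. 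So ``flat descent along the cover, with descent data matching $\Gamma_R$-actions'' cannot be verified as stated.

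The paper closes this gap with two inputs that are absent from your outline. First, Wu's theorem (\cite[Theorem 4.6]{Wu}, Proposition \ref{Prop-Wu}) that base change to the perfection of a prism induces an equivalence on étale $\varphi$-modules over $\widehat{A[\frac{1}{I}]}$; this is what allows one to replace the true Čech nerve $A_\infty^\bullet$ in $(R/\Ainf)_{\Prism}$ by the Čech nerve $B_\infty^\bullet$ in the \emph{perfect} prismatic site (Lemmas \ref{Lem-Perfect special cover} and \ref{Lem-Fcrystal as perfect Stratification}), so that a stratification need only be produced over the perfect nerve. Second, the identification $\widehat{B_\infty^i[\frac{1}{\xi}]}\simeq \rC(\Gamma^i,\rW((R_\infty[\frac{1}{p}])^\flat))$ (Lemma \ref{Lem-Describe of perfect cover}) is not a formal torsor statement on the integral level: the perfectoid self-products $S_\infty^i$ can have $p$-torsion killed by $\frakm_{\Cp}$, so the comparison with continuous functions on $\Gamma^i$ only holds after inverting $p$, and is proved by a diamond-theoretic argument ($V_i^\diamond\simeq V_1^\diamond\times\underline\Gamma^i$ plus \cite[Proposition 10.2.3]{SW}); only then does ``stratification $=$ continuous semilinear $\Gamma$-action'' (Remark \ref{Rmk-Description of Gamma-action}) make sense and match the action you want. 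A further point your sketch leaves open is canonicity for globalization: rather than gluing framing-dependent local equivalences, the paper defines a framing-free functor $\alpha$ by evaluating the crystal at the perfect prism $(\rW((S^+)^\flat),\Ker\theta)$ attached to each affinoid perfectoid object of $X_{\proet}$ and invokes \cite[Theorem 9.3.7, Corollary 9.3.8]{KL}, which removes your ``secondary technical issue'' about functoriality in the perfectoid base. Without the perfection step and the $\rC(\Gamma^i,-)$ computation, your essential-surjectivity argument (promoting a $\Gamma_R$-equivariant module to a crystal) does not go through.
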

	The proof of this theorem uses relative $(\varphi,\Gamma)$-modules as intermediate objects, which are certain pro-\'etale sheaves on the generic fiber by the work of Kedlaya--Liu \cite{KL}. We then give an equivalence between the category of prismatic $F$-crystals and the category of relative $(\varphi,\Gamma)$-modules following the idea of Wu.
	\begin{rmk}
		In fact, Bhatt and Scholze proved a more general result than the above theorem in \cite{BS1}. Their results apply to crystals in perfect complex and $p$-adic formal schemes, which are not necessarily smooth. Compared to their proof,  the proof of our theorem involves a concrete construction of the equivalence. We hope the readers can still find this concrete description useful.
	\end{rmk}
	
	\begin{rmk}
		It seems natural to hope for a relative version of the result of Bhatt and Scholze concerning the crsytalline representations. Namely, we hope there is an equivalence between the category of prismatic $F$-crystals over $\calO_{\Prism}$ on the absolute prismatic site $\frakX_{\Prism}$ and the category of ``crystalline $\bZ_p$-local systems" on the rigid generic fiber of $\frakX$. Note that the category of ``crystalline $\bZ_p$-local systems" are only well-defined in the unramified case at the moment. Based on the idea of Bhatt--Scholze, one might need a description of crystalline $\bQ_p$-local systems via certain relative weakly admissible filtered $\varphi$-modules. But there seems still no satisfying relative theory of ``weakly admissible implies admissible", cf.\cite{Moon}.
		
	\end{rmk}
	
	Our second main theorem is an  \'etale comparison with coefficients.
	\begin{thm}[Theorem \ref{global etale}]
		Let $(A,I)$ be a perfect prism such that $A/I$ contains all the $p$-power roots of unity and $\frakX$ be a separated $p$-adic smooth formal scheme over $A/I$. For any $\calM$ which is  a prismatic $F$-crystal over $\calO_{\Prism}[\frac{1}{I}]^{\wedge}_p$ with corresponding $\Zp$-local system $\calL$ on $X_{\rm \acute et}$, there is a quasi-isomorphism
		\[
		R\Gamma((\frakX/A)_{\Prism}, \calM)^{\varphi=1}\simeq R\Gamma(X_{\rm \acute et}, \calL).
		\]
	\end{thm}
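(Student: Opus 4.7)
The plan is to bridge prismatic cohomology and étale cohomology of $\calL$ through the pro-étale site, using an Artin--Schreier-type exact sequence of period sheaves. Because $(A,I)$ is a perfect prism and $A/I$ contains all $p$-power roots of unity, any affinoid perfectoid pro-étale cover of $X$ produces perfect prisms over $(A,I)$: to $\Spa(R,R^+)\to X$ one associates $(\AAinf(R),\ker\theta)$, and $\calM$ evaluated on such an object is a finite projective $\AAinf(R)[\frac{1}{I}]^\wedge_p$-module $M(R)$ together with an isomorphism $\varphi^{*}M(R)\simeq M(R)$. These perfect prisms form a covering family of $(\frakX/A)_\Prism$.

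Next, I would sheafify the above assignments. Let $\widehat{\bB}$ denote the pro-étale sheaf on $X_\proet$ obtained from $\Spa(R,R^+)\mapsto \AAinf(R)[\frac{1}{I}]^\wedge_p$, and let $\widehat{\calM}$ denote the pro-étale sheaf obtained from $M(R)$. The concrete construction behind Theorem \ref{local system}, which passes through relative $(\varphi,\Gamma)$-modules, identifies $\calL$ with the $\varphi$-invariants of $\widehat{\calM}$ and yields a natural isomorphism
\[
\widehat{\calM}\;\simeq\;\calL\otimes_{\Zp}\widehat{\bB}
\]
of pro-étale sheaves. Comparing the {\v C}ech--Alexander complex of $\calM$ along the cover by the perfect prisms above with the {\v C}ech complex of $\widehat{\calM}$ along the corresponding affinoid perfectoid cover of $X_\proet$, one obtains
\[
R\Gamma((\frakX/A)_\Prism,\calM)\;\simeq\;R\Gamma(X_\proet,\widehat{\calM}).
\]

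Now I would apply the classical exact sequence
\[
0\to\Zp\to \AAinf(R)[\tfrac{1}{I}]^\wedge_p\xrightarrow{\varphi-1}\AAinf(R)[\tfrac{1}{I}]^\wedge_p\to 0,
\]
which holds for every perfectoid $R$; sheafifying on $X_\proet$ yields $0\to\Zp\to\widehat{\bB}\xrightarrow{\varphi-1}\widehat{\bB}\to 0$. Tensoring the flat sheaf $\calL$ with this sequence gives the short exact sequence $0\to\calL\to\widehat{\calM}\xrightarrow{\varphi-1}\widehat{\calM}\to 0$. Taking $R\Gamma(X_\proet,-)$, combining with the identification of the previous paragraph, and using $R\Gamma(X_\proet,\calL)\simeq R\Gamma(X_\et,\calL)$ for an étale $\Zp$-local system, one obtains the distinguished triangle
\[
R\Gamma(X_\et,\calL)\to R\Gamma((\frakX/A)_\Prism,\calM)\xrightarrow{\varphi-1}R\Gamma((\frakX/A)_\Prism,\calM),
\]
which is equivalent to the claimed quasi-isomorphism upon passage to $\varphi$-fixed points.

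The main obstacle is the pair of identifications in the second paragraph: that the perfect prisms attached to affinoid perfectoid pro-étale covers of $X$ form a basis (up to refinement) for computing prismatic cohomology on $(\frakX/A)_\Prism$, and that the resulting {\v C}ech--Alexander complex with coefficients in $\calM$ is quasi-isomorphic to the {\v C}ech complex computing pro-étale cohomology of $\widehat{\calM}$. Once this compatibility is established -- by unwinding the construction of the equivalence in Theorem \ref{local system} and invoking the (almost) vanishing of higher cohomology of $\widehat{\bB}$-type sheaves on affinoid perfectoids -- the remaining steps reduce to the formal sheaf-theoretic argument above, with the local system $\calL$ merely twisting a $\widehat{\bB}$-module throughout.
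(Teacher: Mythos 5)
Your overall architecture (pass through the pro\'etale site, identify $\widehat{\calM}\simeq\calL\otimes_{\Zp}\widehat{\bB}$, and finish with an Artin--Schreier-type sequence) is reasonable for the perfectoid/pro\'etale half of the statement, but the central claim of your second paragraph --- that $R\Gamma((\frakX/A)_{\Prism},\calM)\simeq R\Gamma(X_{\proet},\widehat{\calM})$ \emph{before} taking $\varphi$-fixed points --- is false in general, and it is exactly the point where the real work lies. The \v{C}ech--Alexander complex computing $R\Gamma((\frakX/A)_{\Prism},\calM)$ along a cover of the final object uses the self-products of that cover in the \emph{full} prismatic site; even when the degree-zero term is a perfect prism such as $(\Ainf(R_{\infty}),(\xi))$, the higher terms are prismatic envelopes $A^i_{\infty}$ which are not perfect, whereas the \v{C}ech complex computing $R\Gamma(X_{\proet},\widehat{\calM})$ has terms given by the self-products in the \emph{perfect} prismatic site, of the form $\rC(\Gamma^i,\rW((R_{\infty}[\frac{1}{p}])^{\flat}))$. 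These two complexes are not quasi-isomorphic: already for $\calM=\calO_{\Prism}[\frac{1}{I}]^{\wedge}_p$ the left side is of $q$-de Rham/$A\Omega$ nature while the right side is the full continuous $\Gamma$-cohomology of $\rW((R_{\infty}[\frac{1}{p}])^{\flat})$, and inverting $I=(\xi)$ and $p$-completing does not invert $\mu$, so the discrepancy does not go away. The two sides agree only after applying $(-)^{\varphi=1}$, which is precisely why the theorem (here and in Bhatt--Scholze) is stated with $\varphi$-fixed points.

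Supplying that missing step is the heart of the paper's proof: one first shows (via transversality and torsion-freeness) that crystals have no higher cohomology on the relevant prisms, so both $R\Gamma((R/A)_{\Prism},\calM)$ and $R\Gamma((R/A)_{\Prism}^{\perf},\calM)$ are computed by \v{C}ech--Alexander complexes; then one compares the two bicomplexes computing the $\varphi$-fixed points through their $E_1$-pages, identifying $\varphi$-invariants and $\varphi$-coinvariants of $\calM(A^{\square}(R)^i)$ with those of $\calM(\Ainf(R_{\infty})^i)$ by interpreting them as automorphisms and isomorphism classes of extensions of \'etale $\varphi$-modules, and invoking Wu's theorem that perfection induces an equivalence of categories of \'etale $\varphi$-modules which preserves extensions. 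Your proposal never addresses this passage from imperfect to perfect prisms, so as written there is a genuine gap; you would need to insert this extension-theoretic comparison (or some substitute showing the $\varphi$-fixed points do not distinguish the site from its perfect subsite). Two smaller remarks: your sequence $0\to\Zp\to\widehat{\bB}\xrightarrow{\varphi-1}\widehat{\bB}\to 0$ is exact only as pro\'etale sheaves, not on sections over a fixed affinoid perfectoid (your sheaf-level use is fine, but the paper instead quotes Kedlaya--Liu directly for $R\Gamma(X_{\proet},\calE)^{\varphi=1}\simeq R\Gamma(X_{\et},\calL)$); and for globalization one must make the local identifications natural, independent of the chosen framing, which the paper handles by a colimit over finite sets of framings.
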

	
	The proof of this theorem also depends on an intermediate object, the perfect prismatic site, which appears already in the \'etale comparison with constant coefficients in \cite{BS}. By the same argument, one can also get the following theorem.
	
	\begin{thm}[Theorem \ref{O_K}]
	    	Let $\frakX$ be a separated $p$-adic smooth formal scheme over $\calO_K$ and $\calM$ be a prismatic $F$-crystal over $\calO_{\Prism}[\frac{1}{I}]^{\wedge}_p$ on $\frakX_{\Prism}$ with corresponding $\Zp$-local system $\calL$ on $X_{\rm \acute et}$. Then there is a quasi-isomorphism
		\[
		R\Gamma((\frakX)_{\Prism}, \calM)^{\varphi=1}\simeq R\Gamma(X_{\rm \acute et}, \calL).
		\]
	\end{thm}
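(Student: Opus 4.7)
The plan is to run the same argument as for Theorem \ref{global etale}, which proceeds by restricting to the perfect prismatic subsite and performing a perfectoid \v{C}ech computation. In Theorem \ref{global etale} the hypothesis that $A/I$ contains all $p$-power roots of unity was used only at the final comparison step, to trivialize the arithmetic Galois action; dropping it simply means keeping track of this extra Galois action, while the rest of the argument adapts verbatim.

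First, let $\frakX_{\Prism}^{\perf} \subset \frakX_{\Prism}$ denote the full subcategory of perfect prisms. As in \cite{BS}, one shows that the natural restriction map
\[
R\Gamma(\frakX_{\Prism}, \calM)^{\varphi=1} \xrightarrow{\sim} R\Gamma(\frakX_{\Prism}^{\perf}, \calM^{\perf})^{\varphi=1}
\]
is a quasi-isomorphism. The key input is that for any prism $(B, J)$ the canonical map $B[\tfrac{1}{J}]^{\wedge}_{p} \to B_{\perf}[\tfrac{1}{J}]^{\wedge}_{p}$ induces an isomorphism on $\varphi=1$-invariants; this statement is formal in $(B, J)$ and insensitive to the base ring, so it carries over unchanged from the situation of Theorem \ref{global etale}.

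Next, cover $\frakX$ by small affine opens $\Spf R$ admitting a toric framing over $\calO_{K}$. For each such $R$, the standard construction produces a pro-Galois perfectoid cover $R \to R_{\infty}$ (adjoining $p$-power roots of the framing coordinates and passing to an algebraic closure of $K$) with Galois group $\Gamma_{R}$. The associated perfect prism $(\AAinf(R_{\infty}), \ker\theta)$ covers $(\Spf R)_{\Prism}^{\perf}$, and computing with its \v{C}ech nerve via the $(\varphi, \Gamma)$-module description from Theorem \ref{local system} identifies the $\varphi=1$-invariants of the \v{C}ech complex of $\calM^{\perf}[\tfrac{1}{I}]^{\wedge}_{p}$ with $R\Gamma_{\cts}(\Gamma_{R}, \calL(R_{\infty}))$. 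Scholze's primitive comparison theorem then identifies the latter with $R\Gamma((\Spa R[\tfrac{1}{p}])_{\et}, \calL)$, and Zariski descent on $\frakX$ yields the global statement.

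The main obstacle is the first step, where one must verify that the $\varphi=1$-invariants on imperfect prisms are controlled by their perfections; as noted, this is formal and independent of the base ring, so the argument of \cite{BS} applies without change. In the second step, the arithmetic Galois factor (coming from $G_{K}$) that was trivial in Theorem \ref{global etale} now contributes to $\Gamma_{R}$, but this only affects the bookkeeping of the \v{C}ech computation: the underlying primitive comparison between continuous Galois cohomology of perfectoid covers and \'etale cohomology of the generic fiber remains valid in the absolute setting.
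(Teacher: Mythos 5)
Your overall plan (pass to the perfect prismatic site, compute there by a \v{C}ech--Alexander complex for a perfectoid cover, then compare with the generic fiber) is indeed the paper's plan -- the paper proves this theorem by repeating the proof of Theorem \ref{global etale} verbatim, with $\widehat{\overline R}$ and $G=\pi_1^{\et}(\Spa(R[\frac{1}{p}],R))$ (Remark \ref{Rmk-General groups}, Corollary \ref{descend equivalence}) playing the role of $R_\infty$ and $\Gamma$. But two of your steps have genuine gaps. First, the comparison of $\varphi=1$-cohomology between the full and the perfect site is not ``formal in $(B,J)$'': $R\Gamma(-)^{\varphi=1}$ is the fiber of $\varphi-1$, so on each \v{C}ech term you must control the \emph{cokernel} of $\varphi-1$ as well as the kernel, and the kernel-only statement you cite does not give this. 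This is exactly where the paper works: it uses Wu's equivalence of \'etale $\varphi$-modules under perfection (which itself has hypotheses -- transversality of $(p,d)$, verified for the terms of the \v{C}ech nerve of the framing prism), upgrades it to preserve extensions (Remark \ref{Rmk-Preserve extension}), and converts that via Lemma \ref{Lem-Cohomology of phi modules} into the identification of both $\rH^0_\varphi$ and $\rH^1_\varphi$ termwise, then compares the two bicomplexes. None of this is insensitive to hypotheses, and your first step as stated skips the half of the argument that carries the content.

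Second, your \v{C}ech computation on the perfect site is wrong as stated. Taking ``$\varphi=1$-invariants of the \v{C}ech complex'' termwise produces continuous cochains valued in $M_\infty^{\varphi=1}=\calL(R_\infty)$, i.e.\ $R\Gamma_{\cts}(\Gamma_R,\calL(R_\infty))$, but this again discards the coinvariant (cokernel) contributions, and with your choice of cover (the toric tower base-changed to $\calO_{\overline K}$) the resulting complex does \emph{not} compute $R\Gamma(X_{\et},\calL)$ in general: $R_\infty$ need not trivialize $\calL$, and $\Spa(R_\infty[\frac{1}{p}],R_\infty)$ has nonvanishing $H^1_{\et}$ with torsion coefficients, so Cartan--Leray has extra terms beyond the $H^0$ you keep. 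Moreover the input you invoke is misnamed: the primitive comparison theorem concerns proper rigid spaces with $\calO^+/p$-coefficients and is not what is needed here. The paper avoids all of this by never converting to $\calL$ on the cover: it identifies the perfect-prismatic \v{C}ech complex with $R\Gamma(X_{\proet},\calE)$ for the Kedlaya--Liu sheaf $\calE$ (using the acyclicity of $\calE$ on affinoid perfectoids, \cite[Lemma 9.3.4]{KL}, and a diamond argument identifying the two \v{C}ech nerves), and only afterwards applies $(-)^{\varphi=1}$, invoking $R\Gamma(X_{\proet},\calE)^{\varphi=1}\simeq R\Gamma(X_{\et},\calL)$ from Kedlaya--Liu; the cokernel terms you dropped are precisely what account for the higher \'etale cohomology of the cover. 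To repair your write-up you should either follow that route, or at least replace $R_\infty$ by $\widehat{\overline R}$ and keep the full two-row ($\varphi-1$) bicomplex throughout.
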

Note that this theorem is a generalization of the \'etale comparison in \cite{BS} even in the case of constant coefficients. In particular, it shows that we need to consider cohomology of absolute prismatic site instead of relative one when we study the case of imperfect prisms.
	
	\begin{rmk}
		The above theorems should be regarded as a part of the whole comparison theory with coefficients as in \cite{BS}.
		In \cite{MT}, Morrow and Tsuji have proved an equivalence between the category of relative Breuil--Kisin--Fargues modules and the category of prismatic $F$-crystals with the base prism $(\Ainf, (\Ker(\theta)))$. In particular, they have constructed explicitly the $\bZ_p$-local systems, the vector bundles with integrable connections and the crystalline $F$-crystals out of relative Breuil--Kisin--Fargues modules (see also \cite{BS1} for some similar statements). They have also got some comparison results with these coefficients. Note that these coefficient objects are not constructed directly out of prismatic $F$-crystals and it seems that Morrow--Tsuji's arguments could not deal with imperfect prisms like the Breuil--Kisin prism $(\frakS, (E))$. We plan to study the case of imperfect prisms in a future work.
	\end{rmk}

	\section*{Acknowledgments}
	We would like to thank Zhiyou Wu for answering our questions. We also want to thank Heng Du, Shizhang Li, Ruochuan Liu, Matthew Morrow and Takeshi Tsuji for their valuable comments on the earlier drafts of this work.
	
	\section{Local construction}\label{local}
	Let $\Ainf$ be the period ring of infinitesimal deformation due to Fontaine and $\theta:\Ainf\rightarrow\calO:=\calO_{\Cp}$ be the canonical surjection. Fix a compatible system of $p$-power roots $\{\zeta_{p^n}\}_n$. Then $\Ker(\theta)$ is a principal ideal generated by $\xi = \frac{\mu}{\varphi^{-1}(\mu)}$ where $\mu = [\epsilon]-1$ and $\epsilon=(1, \zeta_p,\zeta_{p^2}\cdots)$.
	
	Let $\Spf(R)$ be a $p$-adic smooth formal scheme over $\calO$ with a framing $\Box: \calO\langle\underline T^{\pm 1}\rangle\rightarrow R$, i.e. an \'etale morphism over $\calO$. Let $(R/\Ainf)_{\Prism}$ be the prismatic site of $\Spf(R)$ over $(\Ainf,(\xi))$ of bounded prisms with the structure sheaf $\calO_{\Prism}$ and $(R/\Ainf)^{\perf}_{\Prism}$ be the site of perfect prisms over $(\Ainf,(\xi))$. Denote $\calO_{\Prism}[\frac{1}{\xi}]^{\wedge}_p$ the $p$-adic completion of $\calO_{\Prism}[\frac{1}{\xi}]$ and then for any $\frakB = (\Spf(R)\leftarrow\Spf(B/\xi B)\rightarrow \Spf(B))$ in $(R/\Ainf)_{\Prism}$, we have ${\calO_{\Prism}[\frac{1}{\xi}]^{\wedge}_p}(\frakB)=\widehat{B[\frac{1}{\xi}]}$ (cf.\cite{Wu})).
	
	\begin{rmk}\label{Rmk-Identify prismatic sites}
		The site $(R/\Ainf)_{\Prism}$ can be viewed as the absolute prismatic site $(R)_{\Prism}$ over $R$ of bounded prisms (\cite[Remark 4.7]{BS}). In fact, for any prism $(A,I)$ endowed with a morphism $R\rightarrow A/I$, the composition $\calO\rightarrow R\rightarrow A/I$ lifts uniquely to a morphism of prisms $(\Ainf,(\xi))\rightarrow (A,I)$ by the deformation theory as $\calO$ is perfectoid.
	\end{rmk}
	
	If we fix an $\Ainf$-lifting $\Ainf\langle\underline T^{\pm 1}\rangle$ (with the $(p,\xi)$-adic topology) of $\calO\langle\underline T^{\pm 1}\rangle$, then $R$ also admits an $\Ainf$-lifting $A^{\Box}_{\inf}(R)$ together with a formally \'etale morphism $\Ainf\langle\underline T^{\pm 1}\rangle\rightarrow A^{\Box}_{\inf}(R)$ lifting $\Box$, which is uniquely determined by the given framing $\Box$. Now, we equip $\Ainf\langle\underline T^{\pm 1}\rangle$ with the $\varphi$-action given by $\varphi(T_i)=T_i^p$ for $1\leq i\leq d$ compatible with the usual $\varphi$-action on $\Ainf$, and then such a $\varphi$-action extends uniquely to $A^{\Box}_{\inf}(R)$.
	
	\begin{exam}\label{Exam-some prisms}
		$(1)$~ The prism$(A^{\Box}_{\inf}(R), (\xi))$ is an object in $(R/\Ainf)_{\Prism}$, since $A^{\Box}_{\inf}(R)$ is formally smooth over $\Ainf$.
		
		$(2)$~ Let $(A^{\Box}_{\inf}(R),(\xi))^{\perf}$ be the perfection of $(A^{\Box}_{\inf}(R),(\xi))$ (\cite[Lemma 3.9]{BS}). Then $(A^{\Box}_{\inf}(R),(\xi))^{\perf}$ is a cover of the final object of the topos ${\rm Shv}((R/\Ainf)_{\Prism}^{\rm perf})$ (see Lemma \ref{Lem-Special cover}); the same holds for $(A^{\Box}_{\inf}(R),(\xi))$.
	\end{exam}
	
	\begin{dfn}\label{Dfn-Prismatic F-Crystal}
		$(1)$~ By a \emph{prismatic $F$-crystal} over $\calO_{\Prism}$ on $(R/\Ainf)_{\Prism}$, we mean a sheaf of $\calO_{\Prism}$-module $\calM$ endowed with an isomorphism $(\varphi^{*}\calM)[\frac{1}{\xi}]\xrightarrow{\simeq}\calM[\frac{1}{\xi}]$, such that for any object $\frakB = (\Spf(R)\leftarrow\Spf(B/\xi B)\rightarrow \Spf(B))$, $\calM(\frakB)$ is a finite projective $B=\calO_{\Prism}(\frakB)$-module and such that for any morphism \[\frakB'=(\Spf(R)\leftarrow\Spf(B'/\xi B')\rightarrow\Spf(B'))\rightarrow \frakB\]
		in $(R/\Ainf)_{\Prism}$, the induced morphism $\calM(\frakB)\otimes_BB'\rightarrow\calM(\frakB')$ is an isomorphism of $\varphi$-modules.
		
		$(2)$ By a \emph{prismatic $F$-crystal} over $\calO_{\Prism}[\frac{1}{\xi}]^{\wedge}_p$ on $(R/\Ainf)_{\Prism}$, we mean a sheaf of ${\calO_{\Prism}[\frac{1}{\xi}]^{\wedge}_p}$-module $\calM$ endowed with an isomorphism $\varphi_{\calM}:(\varphi^*\calM)\xrightarrow{\simeq} \calM$, such that for any object $\frakB$, $\calM(\frakB)$ is a finite projective $\widehat{B[\frac{1}{\xi}]}=\calO_{\Prism}[\frac{1}{\xi}]^{\wedge}_p(\frakB)$-module such that for any morphism $\frakB'\rightarrow \frakB$ in $(R/\Ainf)_{\Prism}$, the induced morphism $\calM(\frakB)\otimes_{\widehat{B[\frac{1}{\xi}]}}\widehat{B'[\frac{1}{\xi}]}\rightarrow\calM(\frakB')$ is an isomorphism of $\varphi$-modules.
		
	\end{dfn}
	
	\begin{dfn}\label{Dfn-Etale phi-Gamma modules}
		Let $R$ be a ring equipped with a continuous endomorphism $\varphi:R\rightarrow R$ and $\Gamma$ be a topological group.
		
		$(1)$~ By a \emph{$\varphi$-module} over $R$, we mean an $R$-module $M$ together with a $\varphi$-semi-linear continuous morphism $\varphi_M:M\rightarrow M$. A $\varphi$-module $M$ is called \emph{\'etale}, if it is finite projective and the linearization $\varphi^{\ast}M\rightarrow M$ of $\varphi_M$ is an isomorphism of $R$-modules. Let $\Phi\rM(R)$ and $\Et\Phi\rM(R)$ be the categories of $\varphi$-modules and \'etale $\varphi$-modules over $R$, respectively.
		
		$(2)$~ Assume, moreover, $R$ is endowed with a continuous $\Gamma$-action commuting with $\varphi$. By a \emph{$(\varphi,\Gamma)$-module} over $R$, we mean a $\varphi$-module $M$ together with a semi-linear continuous $\Gamma$-action commuting with $\varphi$. A $(\varphi,\Gamma)$-module $M$ is call \emph{\'etale}, if it is an \'etale $\varphi$-module. Let $\Phi\Gamma\rM(R)$ and $\Et\Phi\Gamma\rM(R)$ be the categories of $(\varphi,\Gamma)$-modules and \'etale $(\varphi,\Gamma)$-modules over $R$, respectively.
	\end{dfn}
	
	Now, we consider the category of prismatic $F$-crystals over $\calO_{\Prism}[\frac{1}{\xi}]^{\wedge}_p$ on $(R/\Ainf)_{\Prism}$.
	
	We first give another description of $(A^{\Box}_{\inf}(R),\xi)^{\perf}$. Let $R_{\infty}= R\widehat \otimes_{\calO\langle\underline T^{\pm 1}\rangle}\calO\langle\underline T^{\pm \frac{1}{p^{\infty}}}\rangle$, which is a perfectoid $\calO$-algebra. Thus, there is a unique $\Ainf$-lifting $\Ainf(R_{\infty})$ of $R_{\infty}$ together with a unique morphism $(\Ainf,(\xi))\rightarrow(\Ainf(R_{\infty}),(\xi))$ of prisms lifting the composition $\calO\rightarrow R\rightarrow R_{\infty}$. One can easily check that $\Ainf(R_{\infty})$ is the $(p,\xi)$-adic completion of $\Ainf\langle\underline T^{\pm \frac{1}{p^{\infty}}}\rangle\otimes_{\Ainf\langle\underline T^{\pm 1}\rangle}A^{\Box}_{\inf}(R)$, where $T_i$ is identified with $[T_i^{\flat}]$ for $T_i^{\flat} = (T_i, T_i^{\frac{1}{p}}, \cdots)\in R^{\flat}_{\infty}$ and that the morphism $A^{\Box}_{\inf}(R)\rightarrow \Ainf(R_{\infty})$ carrying $T_i$ to $T_i$ of $\Ainf$-algebras preserves $\delta$-structures and induces a morphism of prisms
	\[(A^{\Box}_{\inf}(R),(\xi))\rightarrow (\Ainf(R_{\infty}),(\xi)).\]
	
	\begin{lem}\label{Lem-Perfection}
		The above morphism induces an isomorphism
		\[(A^{\Box}_{\inf}(R),(\xi))^{\perf}\rightarrow (\Ainf(R_{\infty}),(\xi)).\]
	\end{lem}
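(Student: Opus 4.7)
The plan is to identify both sides as the $(p,\xi)$-completed Frobenius colimit of $A^{\Box}_{\inf}(R)$. First, I observe that $(\Ainf(R_{\infty}),(\xi))$ is itself a perfect prism, since $R_{\infty}$ is a perfectoid $\calO$-algebra. Consequently, the morphism $(A^{\Box}_{\inf}(R),(\xi))\to(\Ainf(R_{\infty}),(\xi))$ constructed just above the lemma factors uniquely through the perfection by its universal property, yielding the candidate map whose invertibility is to be checked.

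To identify the two sides, I would exploit the formal \'etaleness of $\Ainf\langle \underline T^{\pm 1}\rangle\to A^{\Box}_{\inf}(R)$. By the uniqueness of Frobenius lifts on formally \'etale $\delta$-extensions, the Frobenius on $A^{\Box}_{\inf}(R)$ is forced to be the unique lift extending the coordinate-wise $T_i\mapsto T_i^p$ on $\Ainf\langle \underline T^{\pm 1}\rangle$. The $\delta$-ring analogue of the invariance of the \'etale site under Frobenius then asserts that the relative Frobenius of $A^{\Box}_{\inf}(R)$ over $\Ainf\langle \underline T^{\pm 1}\rangle$ is an isomorphism. Iterating and taking $(p,\xi)$-completed Frobenius colimits yields
\[
(A^{\Box}_{\inf}(R),(\xi))^{\perf}\;\cong\; A^{\Box}_{\inf}(R)\,\widehat\otimes_{\Ainf\langle \underline T^{\pm 1}\rangle}\,(\Ainf\langle \underline T^{\pm 1}\rangle,(\xi))^{\perf}.
\]
One concludes by noting that $(\Ainf\langle \underline T^{\pm 1}\rangle,(\xi))^{\perf}=\Ainf\langle \underline T^{\pm 1/p^{\infty}}\rangle$, because Frobenius on $\Ainf$ is already bijective so only the coordinate direction needs to be perfected; plugging this in recovers the explicit description of $\Ainf(R_{\infty})$ recalled before the lemma.

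The main obstacle is making the commutation of perfection with formally \'etale base change precise in the $(p,\xi)$-completed $\delta$-ring setting. A cleaner alternative is derived Nakayama: both sides are $\xi$-torsion-free and $(p,\xi)$-complete as underlying rings of perfect prisms, so the candidate map is an isomorphism if and only if it is so modulo $\xi$. Modulo $\xi$ the right-hand side is $R_{\infty}$, while the left-hand side is the perfectoidization of $R$ in the sense of Bhatt--Scholze; the framing then lets one identify the latter with $R_{\infty}$ by directly adjoining $p$-power roots of the $T_i$.
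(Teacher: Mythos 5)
Your first route is, at its core, the same computation the paper performs: everything hinges on the completed base change $A':=A^{\Box}_{\inf}(R)\widehat\otimes_{\Ainf\langle\underline T^{\pm1}\rangle}\Ainf\langle\underline T^{\pm\frac{1}{p^{\infty}}}\rangle$ and on the identification $(\Ainf\langle\underline T^{\pm1}\rangle,(\xi))^{\perf}=\Ainf\langle\underline T^{\pm\frac{1}{p^{\infty}}}\rangle$. But the point you yourself flag as ``the main obstacle'' --- commutation of perfection with $(p,\xi)$-completely \'etale base change, via invertibility of the relative Frobenius in the completed $\delta$-ring setting --- is exactly where the content of the lemma lies, and you leave it unproven. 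The paper avoids invoking the general principle: it shows directly that $(A',(\xi))$ is a prism (by $(p,\xi)$-complete \'etaleness of $A^{\Box}_{\inf}(R)$ over $\Ainf\langle\underline T^{\pm1}\rangle$), that $A'/\xi\cong R_{\infty}$, hence via the perfect-prism/perfectoid equivalence of Bhatt--Scholze that $(A',(\xi))$ is a perfect prism isomorphic to $(\Ainf(R_{\infty}),(\xi))$; the universal property of the perfection then produces maps $i$ and $j$ with $j\circ i=\id$, and $i\circ j=\id$ is checked modulo $\xi$ on the elements $T_i^{\frac{1}{p^{n}}}$, again through that equivalence. If you want to keep your formulation, you must actually prove that the relative Frobenius $A^{\Box}_{\inf}(R)\widehat\otimes_{\Ainf\langle\underline T^{\pm1}\rangle,\varphi}\Ainf\langle\underline T^{\pm1}\rangle\rightarrow A^{\Box}_{\inf}(R)$ is an isomorphism (reduce mod $p$, use that the relative Frobenius of an \'etale map of $\bF_p$-algebras is an isomorphism, and control the $\xi$-adic completions) and that the completed Frobenius colimit commutes with the completed tensor product; that is the proof, not a remark, and once written out it amounts to the paper's argument repackaged.

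The fallback you offer does contain a genuine gap. Reducing modulo $\xi$ is legitimate: a morphism of perfect prisms is an isomorphism if and only if its reduction is, by the equivalence with perfectoid rings (this is also how the paper checks $i\circ j=\id$). What is not justified is the computation of the left-hand side modulo $\xi$. For a smooth $R$ there is no initial perfectoid $R$-algebra, and the Bhatt--Scholze perfectoidization is available as an honest ring only in the (semi)perfectoid setting; more to the point, the identification $(A^{\Box}_{\inf}(R),(\xi))^{\perf}/\xi\cong R_{\infty}$ is essentially equivalent to the lemma itself, so asserting it ``by directly adjoining $p$-power roots of the $T_i$'' is circular. Computing that reduction requires exactly the \'etale-base-change input of your first paragraph (or the paper's explicit construction of $A'$), so the alternative does not bypass the obstacle --- it hides it.
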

	\begin{proof}
		Denote $(A_{\infty},(\xi)):=(A^{\Box}_{\inf}(R),(\xi))^{\perf}$. By \cite[Lemma 3.9]{BS}, $A_{\infty}$ is the $(p,\xi)$-adic completion of $\colim_{\varphi}A^{\Box}_{\inf}(R)$ and is the initial object among all perfect prisms over $(A^{\Box}_{\inf}(R),(\xi))$. In particular, there is a unique morphism $i:(A_{\infty},(\xi))\rightarrow (\Ainf(R_{\infty}),(\xi))$. We shall construct the inverse of $i$ to complete the proof.
		
		When $R = \calO\langle\underline T^{\pm 1}\rangle$, it is straightforward to check $A_{\infty} = \Ainf\langle\underline T^{\frac{1}{p^{\infty}}}\rangle$ and in particular, the lemma holds in this case. For the general case, as $R$ is \'etale over $\calO\langle\underline T^{\pm 1}\rangle$, there is a unique morphism
		\[j: A^{\Box}_{\inf}(R)\widehat \otimes_{\Ainf\langle\underline T^{\pm 1}\rangle}\Ainf\langle\underline T^{\frac{1}{p^{\infty}}}\rangle=:A_{\infty}'\rightarrow A_{\infty}\]
		once one regards $(A_{\infty},(\xi))$ as a perfect prism over $(\Ainf\langle\underline T^{\pm 1}\rangle,(\xi))$. According to the $(p,\xi)$-\'etaleness of $A^{\Box}_{\inf}(R)$ over $\Ainf\langle\underline T^{\pm 1}\rangle$, $(A_{\infty}',(\xi))$ is a prism. By construction of $A_{\infty}'$, its reduction modulo $\xi$ is the $p$-adical completion of $R\otimes_{\calO\langle\underline T^{\pm 1}\rangle}\calO\langle\underline T^{\pm \frac{1}{p^{\infty}}}\rangle = R_{\infty}$. By the equivalence in \cite[Theorem 3.10]{BS}, we deduce $(A_{\infty}',\xi)$ is a perfect prism and is isomorphic to $(\Ainf(R_{\infty}),(\xi))$.
		
		Thanks to the universal property of $(A_{\infty},(\xi))$, the composite $j\circ i$ is the identity morphism. It remains to check $i\circ j$ is also the identity morphism. By \cite[Theorem 3.10]{BS}, it is enough to verify this modulo $\xi$. However, by construction, we have the following commutative diagram of $\Ainf$-algebras
		\begin{equation*}
			\xymatrix@C=0.45cm{
				\Ainf\langle\underline T^{\pm \frac{1}{p^{\infty}}}\rangle\ar[d]\ar@{=}[rr]&&\Ainf\langle\underline T^{\pm \frac{1}{p^{\infty}}}\rangle\ar[d]\\
				A_{\infty}'\ar[r]^{j}&A_{\infty}\ar[r]^{i}& A_{\infty}'
			}.
		\end{equation*}
		Modulo $\xi$, we see that the morphism $i\circ j: R_{\infty}\rightarrow R_{\infty}$ of $R$-algebras preserve $T_i^{\frac{1}{p^{n}}}$ for all $1\leq i\leq d$ and all $n\geq 0$. By definition of $R_{\infty}$, we deduce $i\circ j$ is the identity morphism modulo $\xi$. This shows that $j$ is the inverse of $i$ as desired.
	\end{proof}
	\begin{rmk}[$\Gamma$-action on $\Ainf(R_{\infty})$]\label{Rmk-Perfection}
		During the proof of Lemma \ref{Lem-Perfection}, we have shown that
		\begin{equation}\label{Equ-Decomposition}
			\Ainf(R_{\infty})=\widehat \bigoplus_{\alpha\in (\bZ[1/p]\cap[0,1))^d}A^{\Box}_{\inf}(R)\underline T^{\alpha},
		\end{equation}
		where $\widehat \bigoplus$ means the topological direct sum with respect to the $(p,\xi)$-adic topology and for any $\alpha = (\alpha_1, \cdots, \alpha_d)\in (\bZ[1/p]\cap[0,1))^d, \underline T^{\alpha} = T_1^{\alpha_1}\cdots T_d^{\alpha_d}$.
		
		We know $\Spa(R_{\infty}[\frac{1}{p}],R_{\infty})$ is a Galois cover of $\Spa(R[\frac{1}{p}],R)$ with Galois group $\Gamma = \oplus_{i=1}^d\Zp\gamma_i$, where $\gamma_i(T_j^{\frac{1}{p^n}}) =\zeta_{p^n}^{\delta_{ij}}T_j^{\frac{1}{p^n}}$ for $1\leq i,j\leq d$. As $R_{\infty}$ is perfectoid, by deformation theory, the above $\Gamma$-action lifts uniquely to a continuous action on $\Ainf(R_{\infty})$ determined by $\gamma_i(T_j) = [\epsilon]^{\delta_{ij}}T_j$ for all $1\leq i,j\leq d$. From this, one can easily check that the equation (\ref{Equ-Decomposition}) is a topological decomposition of $\Gamma$-modules.
		In particular, $A^{\Box}_{\inf}(R)$ is equipped with the restricted $\Gamma$-action.
		
		Clearly, all above $\Gamma$-actions on $\Ainf(R_{\infty})$ and $A^{\Box}_{\inf}(R)$ commute with the actions of $\varphi$.
	\end{rmk}
	\begin{lem}\label{Lem-Special cover}
		The prism $(\Ainf(R_{\infty}),(\xi))\in (R/\Ainf)_{\Prism}$ is a cover of the final object of the topos ${\rm Shv}((R/\Ainf)_{\Prism})$. In particular, the prism $(A^{\square}_{\inf}(R),(\xi))$ is also a cover of the final object of the topos ${\rm Shv}((R/\Ainf)_{\Prism})$.
	\end{lem}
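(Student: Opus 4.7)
The plan is to first establish the assertion for $(\Ainf(R_{\infty}),(\xi))$; the statement for $(A^{\square}_{\inf}(R),(\xi))$ then follows by composing with the canonical ring morphism $A^{\square}_{\inf}(R)\to\Ainf(R_{\infty})$ supplied by Lemma~\ref{Lem-Perfection} (so any site morphism $\frakB'\to(\Ainf(R_{\infty}),(\xi))$ produced for $\frakB$ yields a morphism $\frakB'\to(A^{\square}_{\inf}(R),(\xi))$ by post-composition). Thus the task reduces to proving that for every $\frakB=(\Spf(R)\leftarrow\Spf(B/\xi B)\to\Spf(B))\in(R/\Ainf)_{\Prism}$, there exists a cover $\frakB'\to\frakB$ in the prismatic topology together with a ring morphism $\Ainf(R_{\infty})\to B'$ compatible with the $R$-structures modulo $\xi$.

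Fix such a $\frakB$. First, exploit the $(p,\xi)$-adic formal smoothness of $A^{\square}_{\inf}(R)$ over $\Ainf$ and the completeness of $B$ to lift the structure map $R\to B/\xi B$ to an $\Ainf$-algebra morphism $A^{\square}_{\inf}(R)\to B$ by choosing arbitrary lifts $\tilde T_i\in B$ of the images of $T_i$. Next, define
\[
B':=B\,\widehat{\otimes}_{A^{\square}_{\inf}(R)}\,\Ainf(R_{\infty}),
\]
with the tensor product completed $(p,\xi)$-adically. The topological decomposition \eqref{Equ-Decomposition} identifies $B'$ with $\widehat{\bigoplus}_{\alpha\in(\bZ[1/p]\cap[0,1))^d}B\cdot\underline T^{\alpha}$, realizing $B'$ concretely as $B$ with compatible formal $p$-power roots $T_i^{1/p^n}$ of the $\tilde T_i$ adjoined. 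In particular $B\to B'$ is $(p,\xi)$-completely faithfully flat.

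The next step is to install on $B'$ the $\delta$-structure dictated by that on $\Ainf(R_{\infty})$, forcing $\delta(T_i^{1/p^n})=0$ and extending the given $\delta$-structure on $B$. Granting this, $(B',\xi B')$ is a bounded prism: its reduction $\bar B'=B'/\xi B'\cong\bar B\,\widehat{\otimes}_R\,R_{\infty}$ carries a natural $R_{\infty}$-structure refining the $R$-structure from $\bar B$, and both $B\to B'$ and $\Ainf(R_{\infty})\to B'$ are $\delta$-$\Ainf$-algebra morphisms. Hence $\frakB':=(B',\xi B')\in(R/\Ainf)_{\Prism}$, the morphism $\frakB'\to\frakB$ is a cover in the prismatic topology, and $\frakB'$ receives the required morphism from $(\Ainf(R_{\infty}),(\xi))$.

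The main obstacle is that the chosen lift $A^{\square}_{\inf}(R)\to B$ is in general not a $\delta$-map, so the $\delta$-structure on $B'$ cannot be produced by a naive $\delta$-ring pushout. The resolution exploits that $\Ainf(R_{\infty})/A^{\square}_{\inf}(R)$ is the $(p,\xi)$-completed perfection (Lemma~\ref{Lem-Perfection}): adjoining compatible $p$-power roots of the $\tilde T_i$ to $B$ and passing to the Frobenius colimit realizes $B'$ as a perfection of $B$ relative to the framing, which carries a canonical $\delta$-structure compatible with the one on $B$ since $\delta$ on a Frobenius colimit is uniquely determined. Equivalently, one can invoke the existence of pushouts with perfect prisms in the category of bounded prisms (cf.\ \cite[Lemma 3.8, Corollary 3.12]{BS}) to obtain $\frakB'$ directly as $\frakB\sqcup_{(A^{\square}_{\inf}(R),(\xi))}(\Ainf(R_{\infty}),(\xi))$; boundedness is then inherited from that of $B$ together with the $(p,\xi)$-completely flat base change.
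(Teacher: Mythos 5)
Your reduction of the second assertion to the first, and your use of formal smoothness to produce a ring-theoretic lift $A^{\Box}_{\inf}(R)\to B$, are fine, but the construction of the cover breaks down exactly at the point you flag, and neither of your proposed repairs closes the gap. The completed tensor product $B':=B\widehat{\otimes}_{A^{\Box}_{\inf}(R)}\Ainf(R_{\infty})$ is formed along a map that need not be a $\delta$-map, and it carries no prism structure in general. Your prescription $\delta(T_i^{1/p^n})=0$ is inconsistent with extending $\delta_B$: any Frobenius lift $\varphi'$ on $B'$ restricting to $\varphi_B$ must satisfy $\varphi'(T_i^{1/p^n})^{p^n}=\varphi_B(\tilde T_i)=\tilde T_i^p+p\delta_B(\tilde T_i)$, so $B'$ would have to contain a compatible tower of $p$-power roots of the unit $1+p\delta_B(\tilde T_i)\tilde T_i^{-p}$; nothing in your construction provides such roots, units of the form $1+pu$ need not admit them (already $1+p$ has no $p$-th root in $\Zp$), and $\delta_B(\tilde T_i)=0$ cannot be arranged for a general object of the site. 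The appeal to perfection is a non sequitur: $B'$ is obtained from $B$ by adjoining $p$-power roots of the $\tilde T_i$, not as a colimit along Frobenius, so it is not perfect and the canonicity of $\delta$ on perfect rings says nothing about it. Likewise, the pushout $\frakB\sqcup_{(A^{\Box}_{\inf}(R),(\xi))}(\Ainf(R_{\infty}),(\xi))$ presupposes that $(A^{\Box}_{\inf}(R),(\xi))\to\frakB$ is a morphism of prisms, i.e.\ that your lift can be chosen to be a $\delta$-map; this is precisely what fails in general (were it always available, every object of the site would map to $(A^{\Box}_{\inf}(R),(\xi))$ and the lemma would be immediate with no cover at all), so the cited results of \cite{BS} give you no such pushout.

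The paper avoids constructing a $\delta$-structure by hand. Since $R\to R_{\infty}$ is a quasi-syntomic cover, so is $B/\xi B\to B/\xi B\widehat{\otimes}_R R_{\infty}$ for any $\frakB$ in the site, and \cite[Proposition 7.11]{BS} supplies a prism $(B',\xi B')$ faithfully flat over $(B,\xi B)$ together with a $p$-faithfully flat map $B/\xi B\widehat{\otimes}_R R_{\infty}\to B'/\xi B'$; because $R_{\infty}$ is perfectoid, the induced map $R_{\infty}\to B'/\xi B'$ lifts uniquely to a morphism of prisms $(\Ainf(R_{\infty}),(\xi))\to (B',\xi B')$ by deformation theory, and the statement for $(A^{\Box}_{\inf}(R),(\xi))$ follows by composition exactly as in your first step. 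If you want an explicit cover in the spirit of your tensor product, the standard substitute is to start from the honest $\delta$-ring $B\widehat{\otimes}_{\Ainf}\Ainf(R_{\infty})$ (both factors are $\delta$-rings over $\Ainf$) and pass to a prismatic envelope as in \cite[Proposition 3.13]{BS} for the kernel of the surjection onto $B/\xi B\widehat{\otimes}_R R_{\infty}$; it is this envelope, not the naive completed base change along a non-$\delta$ lift, that produces a flat prism over $\frakB$ receiving $(\Ainf(R_{\infty}),(\xi))$.
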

	\begin{proof}
		It can be checked directly that $R_{\infty}$ is a quasi-syntomic cover of $R$ in the sense of \cite[Definition 4.10(3)]{BMS-b}. So for any bounded prism $(A,I)\in (R/\Ainf)_{\Prism}$, $A/I\widehat \otimes_{R}R_{\infty}$ is also a quasi-syntomic cover of $A/I$. By \cite[Proposition 7.11]{BS}, there exists a prism $(B,IB)$ over $(A,I)$ (hence over $(\Ainf,(\xi))$) together with a $p$-faithfully flat morphism $A/I\widehat \otimes_RR_{\infty}\rightarrow B/IB$. In particular, $(B,IB)$ is a cover of $(A,I)$. Now, the composition $R_{\infty}\rightarrow A/I\widehat \otimes_RR_{\infty}\rightarrow B/IB$ lifts (uniquely) to a morphism $(\Ainf(R_{\infty}),(\xi))\rightarrow (B,IB)$ by deformation theory as $R_{\infty}$ is perfectoid. Since $(B,IB)$ is a cover of $(A,I)$, we complete the proof.
	\end{proof}
	
	Let $\widehat{A^{\Box}_{\inf}(R)[\frac{1}{\xi}]}$ be the $p$-adic completion of $A^{\Box}_{\inf}(R)[\frac{1}{\xi}]$. As $\varphi(\xi) = \xi^p+p\delta(\xi)$ with $p\delta(\xi)$ topologically nilpotent (since $\delta(\xi)\in \rA_{\inf}^{\times}$), the $\varphi$-action on $A^{\Box}_{\inf}(R)$ extends (uniquely) to $\widehat{A^{\Box}_{\inf}[\frac{1}{\xi}]}$. Similarly, if we denote by $\widehat{\Ainf(R_{\infty})[\frac{1}{\xi}]}$ the $p$-adic completion of $\Ainf(R_{\infty})[\frac{1}{\xi}]$, then it also admits an induced $\varphi$-action. Moreover, as $\Gamma$ fixes the base ring $\Ainf$, both $\widehat{A^{\Box}_{\inf}(R)[\frac{1}{\xi}]}$ and $\widehat{\Ainf(R_{\infty})[\frac{1}{\xi}]}$ are endowed with (continuous) $\Gamma$-actions as well.
	
	We recall the following proposition in \cite{Wu}.
	\begin{prop}[\emph{\cite[Theorem 4.6]{Wu}}]\label{Prop-Wu}
		Let $(A,I)$ be a bounded prism such that $\varphi(I)$ modulo $p$ is generated by a non-zero divisor in $A/p$. Let $(A_{\infty},IA_{\infty})$ be the perfection of the prism $(A,I)$, then we have an equivalence of categories
		\[\Et\Phi\rM(\widehat{A[\frac{1}{I}]})\rightarrow \Et\Phi\rM(\widehat{A_{\infty}[\frac{1}{I}]})\]
		induced by base change.
	\end{prop}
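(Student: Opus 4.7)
The plan is to reduce to the mod-$p$ shadow of the claim and then invoke the classical insensitivity of \'etale $\varphi$-modules to perfection in characteristic $p$. Set $B:=\widehat{A[\frac{1}{I}]}$ and $B_\infty:=\widehat{A_\infty[\frac{1}{I}]}$. First I would use $p$-adic completeness to write $\Et\Phi\rM(B)\xrightarrow{\simeq}\lim_n\Et\Phi\rM(B/p^n)$, and similarly for $B_\infty$; then I would show that each transition $\Et\Phi\rM(B/p^{n+1})\to\Et\Phi\rM(B/p^n)$ is itself an equivalence, since an \'etale $\varphi$-module over $B/p^n$ lifts uniquely, up to unique isomorphism, to $B/p^{n+1}$ --- the rigidifying isomorphism $\varphi^*M\xrightarrow{\sim}M$ forces uniqueness by a standard $1-\varphi$ bijectivity argument on the obstruction module. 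This reduces matters to proving the equivalence modulo $p$.

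Next I would identify the mod-$p$ reductions. Since $A_\infty$ is by construction the $(p,I)$-adic completion of $\colim_\varphi A$, its mod-$p$ reduction is a completed colimit of $A/p$ along Frobenius, and the hypothesis that $\varphi(I)\bmod p$ is generated by a non-zero divisor in $A/p$ is exactly what ensures inversion of $I$ behaves well with Frobenius, so that $B_\infty/p$ can be identified with the $p$-completed perfection of $B/p$. Once this identification is in place, I would invoke the Katz-type equivalence $\Et\Phi\rM(S)\xrightarrow{\simeq}\Et\Phi\rM(S_\perf)$ for rings $S$ of characteristic $p$: the quasi-inverse uses the isomorphism $\varphi^*N\xrightarrow{\sim}N$ to descend an \'etale $\varphi$-module over $S_\perf$ to $S$ via its $\varphi$-stable $S$-structure.

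The step I expect to be hardest is the second: a clean identification of $B_\infty/p$ with the completed perfection of $B/p$, since one must check that $p$-adic completion, inversion of $I$, and colimit along Frobenius all commute in the correct order, with the hypothesis on $\varphi(I)\bmod p$ invoked precisely where localization needs to respect Frobenius. The first step is a standard deformation argument, and the final Katz-style equivalence in characteristic $p$ is classical.
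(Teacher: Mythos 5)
The paper does not actually prove this proposition: it is quoted from Wu \cite[Theorem 4.6]{Wu}, and Remark \ref{Rmk-Preserve extension} only recalls the skeleton of Wu's argument (first the uncompleted perfection $\colim_{\varphi}$, then the $p$-adic completion, then the case of a perfect $\Fp$-algebra via Witt vectors). Measured against that route, your proposal has a genuine gap at its very first reduction. The claim that each transition functor $\Et\Phi\rM(B/p^{n+1})\to\Et\Phi\rM(B/p^n)$ is an equivalence is false; combined with your limit description it would say that \'etale $\varphi$-modules over $B$ are equivalent to those over $B/p$, i.e. (in the case the paper actually uses, $A=\Ainf$, $B=\widehat{\Ainf[\frac{1}{\xi}]}=\rW(\Cp^{\flat})$) that finite free $\Zp$-modules form a category equivalent to finite-dimensional $\Fp$-vector spaces, which is absurd. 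Concretely, the ``$1-\varphi$ bijectivity'' you invoke fails: the automorphisms of a lift of $M$ along the square-zero extension $B/p^{n+1}\to B/p^n$ are controlled by the kernel of $1-\varphi$ on the \'etale $\varphi$-module $\End(M)\otimes p^nB/p^{n+1}$ over $B/p$, and over such rings $1-\varphi$ is (typically) surjective but never injective on a nonzero \'etale $\varphi$-module --- its kernel is exactly the associated mod-$p$ local system, of $\Fp$-dimension equal to the rank. So lifts are not unique up to unique isomorphism, the transition functors are full and essentially surjective but not faithful, and the tower does not collapse.

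The workable d\'evissage, and the one implicit in Wu's proof as recalled in Remark \ref{Rmk-Preserve extension}(2), runs in the other direction: one proves the base-change equivalence \emph{horizontally}, between $B/p^n$ and $B_\infty/p^n$ for every $n$ (starting from $n=1$ and inducting via $p$-torsion-freeness and square-zero extension arguments), and only then passes to the limit over $n$ on both sides. Your second step is the right idea for the $n=1$ case, but note a further point you only gesture at: $B_\infty/p$ is not the perfection of $B/p$ but a \emph{completion} of it (e.g.\ for $A=A^{\Box}_{\inf}(R)$ one gets $(R_\infty[\frac{1}{p}])^{\flat}$ rather than $\colim_\varphi$ of $A^{\Box}_{\inf}(R)[\frac{1}{\xi}]/p$), so besides the Katz-type insensitivity to perfection in characteristic $p$ you also need insensitivity of \'etale $\varphi$-modules to this completion; that is precisely the content of the completion step (\cite[Proposition 4.3]{Wu}) in Wu's argument, where the hypothesis on $\varphi(I)$ mod $p$ and torsion-freeness are used.
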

	
	\begin{rmk}\label{Rmk-Preserve extension}
		Keep notations as in Proposition \ref{Prop-Wu}. If moreover $I=(d)$ such that $(p,d)$ is regular in $A$, then the equivalence in Proposition \ref{Prop-Wu} preserves tensor products, dualities and extensions. The first two assertions are obvious. To see the third, one needs to check the proof of \cite[Theorem 4.6]{Wu} carefully. We split the sketch of the proof in steps.
		
		$(1)$~ Let $R$ be as in \cite[Proposition 4.2]{Wu}. Then the equivalence there preserves extensions. Denote $R_{\infty} = \colim_{\varphi} R$. It suffices to check short exact sequences of \'elate $\varphi$-modules over $R_{\infty}$ lift to short exact sequences of \'elate $\varphi$-modules over $R$. Since all modules and morphisms lift to $R_n = R$ for some $n$, this follows from the proof of \cite[Proposition 4.2]{Wu}.
		
		$(2)$~ Let $R$ be as in \cite[Proposition 4.3]{Wu}. Let $\widehat R_{\infty}$ be the $p$-adic completion of $R_{\infty}$. Assume $R$ is $p$-torsion free (and thus so are $R_{\infty}$ and $\widehat R_{\infty}$). Then the equivalence there preserves extensions. Since all rings involved are $p$-torsion free, this can be checked modulo $p^n$ and then reduces to the above case.
		
		$(3)$~ Under the assumption on $(A,(d))$, the equivalence in \cite[Theorem 4.6]{Wu} preserves extensions. By the argument in the proof of \cite[Theorem 4.4]{Wu}, it is enough to check that for a perfect $F_p$-algebra $R$, the equivalence between the category of \'etale $\varphi$-modules over $\rW(R)$ and the category of lisse $\Zp$-sheaves on $X = \Spec(R)$ preserves extensions. It suffices to check this modulo $p^n$. We regard lisse $\Zp/p^n$-sheaves as $\Zp/p^n$-representations of the algebraic fundamental group $G$ of $X$.
		For a short exact sequence of $\Zp/p^n$-representations
		\[\xymatrix@C=0.45cm{
			0\ar[r]& L_1\ar[r] &L_2\ar[r] &L_3\ar[r]& 0,
		}\]
		choose a finite \'etale Galois cover $Y = \Spec(S)$ of $X$ trivializing all $L_i$'s. Denote by $G_S$ the corresponding Galois group. Since the \'etale $\varphi$-modules corresponding to $L_i$ are $(\rW_n(S)\otimes L_i)^{G_S}$, the resulting sequence of \'etale $\varphi$-modules is exact by Hilbert's theorem $90$.
		Conversely, for any short exact sequence of \'etale $\varphi$-modules
		\[\xymatrix@C=0.45cm{
			0\ar[r]& M_1\ar[r] &M_2\ar[r] &M_3\ar[r]& 0,
		}\]
		denote $L_i$ the corresponding representations for all $i$ and choose $S$ as above. Then $L_i = (M_i\otimes_{\rW_n(R)}\rW_n(S))^{\varphi=1}$. Since taking $\varphi$-invariants is a left exact functor, we get the desired exactness by counting dimensions.
	\end{rmk}
	
	\begin{prop}\label{Prop-equivalence of phi-Gamma modules}
		$(1)$~ The functor $\Et\Phi\rM(\widehat{A^{\Box}_{\inf}(R)[\frac{1}{\xi}]})\rightarrow \Et\Phi\rM(\widehat{\Ainf(R_{\infty})[\frac{1}{\xi}]})$ induced by the base change $M\mapsto M\otimes_{\widehat{A^{\Box}_{\inf}(R)[\frac{1}{\xi}]}}\widehat{\Ainf(R_{\infty})[\frac{1}{\xi}]}$ is a tensor equivalence.
		
		$(2)$~ The functor $\Et\Phi\Gamma\rM(\widehat{A^{\Box}_{\inf}(R)[\frac{1}{\xi}]})\rightarrow \Et\Phi\Gamma\rM(\widehat{\Ainf(R_{\infty})[\frac{1}{\xi}]})$ induced by the base change $M\mapsto M\otimes_{\widehat{A^{\Box}_{\inf}(R)[\frac{1}{\xi}]}}\widehat{\Ainf(R_{\infty})[\frac{1}{\xi}]}$ is a tensor equivalence.
	\end{prop}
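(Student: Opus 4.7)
The plan is to derive Part (1) directly from Proposition \ref{Prop-Wu} via Lemma \ref{Lem-Perfection}, and then to bootstrap Part (2) by transporting the semi-linear $\Gamma$-action through the equivalence obtained in Part (1).

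For Part (1), I would first verify that $(A^{\Box}_{\inf}(R),(\xi))$ satisfies the hypotheses of Proposition \ref{Prop-Wu}: it is a bounded prism (Example \ref{Exam-some prisms}), and since $\xi$ is distinguished one has $\varphi(\xi) \equiv \xi^p \pmod{p}$, with $\xi$ a non-zero divisor in $A^{\Box}_{\inf}(R)/p$ (this follows from the $\xi$-regularity of $\Ainf\langle\underline T^{\pm 1}\rangle/p$ and the formal \'etaleness of $\Ainf\langle\underline T^{\pm 1}\rangle \to A^{\Box}_{\inf}(R)$). Lemma \ref{Lem-Perfection} identifies the perfection with $(\Ainf(R_\infty),(\xi))$, so Proposition \ref{Prop-Wu} directly yields the claimed equivalence of $\varphi$-module categories. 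The tensor compatibility is then immediate because completed base change is a symmetric monoidal functor.

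For Part (2), the key observation is that by Remark \ref{Rmk-Perfection} the $\Gamma$-action on $\Ainf(R_{\infty})$ preserves $A^{\Box}_{\inf}(R)$, so the base change functor $F$ of Part (1) commutes with pullback by each $\gamma$: there are canonical natural isomorphisms $F \circ \gamma^{*} \simeq \gamma^{*} \circ F$ for every $\gamma \in \Gamma$. A semi-linear $\Gamma$-action on an \'etale $\varphi$-module is equivalently a cocycle of $\varphi$-linear isomorphisms $\sigma_\gamma : \gamma^{*}M \xrightarrow{\simeq} M$, and by fully faithfulness of $F$ such a cocycle on $M$ descends uniquely to a cocycle $\tau_\gamma : \gamma^{*}N \xrightarrow{\simeq} N$ on $N := F^{-1}(M)$. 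This endows $N$ with a semi-linear $\Gamma$-action commuting with $\varphi$, functorially and inverse to the base change, with tensor compatibility inherited from Part (1).

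The step I expect to be the main obstacle is checking that the transported $\Gamma$-action on $N$ is continuous. My approach: after localizing so that $N$ is free, $\tau_\gamma$ is represented by a matrix $T_\gamma$ with entries in $\widehat{A^{\Box}_{\inf}(R)[\frac{1}{\xi}]}$, while $\sigma_\gamma$ corresponds to the image of this same matrix under the base change, viewed with entries in $\widehat{\Ainf(R_\infty)[\frac{1}{\xi}]}$. Since the topological decomposition (\ref{Equ-Decomposition}) exhibits $A^{\Box}_{\inf}(R)$ as a direct summand of $\Ainf(R_\infty)$, the induced map of $p$-completed localizations is a (topologically) split injection, and hence the $p$-adic topology on the matrix entries in $\widehat{A^{\Box}_{\inf}(R)[\frac{1}{\xi}]}$ coincides with the subspace topology inherited from $\widehat{\Ainf(R_\infty)[\frac{1}{\xi}]}$. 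Continuity of $\gamma \mapsto \sigma_\gamma$ therefore forces continuity of $\gamma \mapsto \tau_\gamma$ entry-by-entry, and continuity is visibly preserved under base change in the opposite direction; the non-free case is handled by passing to a Zariski cover and invoking fully faithfulness to glue.
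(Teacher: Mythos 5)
Your proposal is correct and takes essentially the same route as the paper: part (1) is exactly the specialization of Proposition \ref{Prop-Wu} to $(A^{\Box}_{\inf}(R),(\xi))$ via Lemma \ref{Lem-Perfection}, and part (2) transports the semi-linear $\Gamma$-action across the fully faithful base-change equivalence, which is what the paper does by identifying $\End_{\varphi}(M)$ with $\End_{\varphi}(M_{\infty})$ (your formulation via cocycles $\gamma^{\ast}M\to M$ is the precise version of that step). Your explicit continuity check, using that the decomposition (\ref{Equ-Decomposition}) makes $\widehat{A^{\Box}_{\inf}(R)[\frac{1}{\xi}]}$ a topologically split direct summand of $\widehat{\Ainf(R_{\infty})[\frac{1}{\xi}]}$, just spells out a point the paper leaves implicit.
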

	\begin{proof}
		$(1)$~ This is a special case of Proposition \ref{Prop-Wu} (in which $(A,I)=(A^{\Box}_{\inf}(R),(\xi))$).
		
		$(2)$~ The fully faithful part follows from $(1)$. It is enough to show the essential surjectivity. For simplicity, put $A^{\Box} := \widehat{A^{\Box}_{\inf}(R)[\frac{1}{\xi}]}$ and $A_{\infty}:=\widehat{\Ainf(R_{\infty})[\frac{1}{\xi}]}$.
		
		Let $M_{\infty}\in \Et\Phi\Gamma(A_{\infty})$ and $M$ be the corresponding \'etale $\varphi$-module over $A^{\Box}$ via the equivalence in $(1)$. Then we have $\End_{\varphi}(M)\simeq \End_{\varphi}(M_{\infty})$. As $M_{\infty}$ is a $(\varphi,\Gamma)$-module, we get a continuous homomorphism $\Gamma\rightarrow\End_{\varphi}(M_{\infty})^{\times}\simeq \End_{\varphi}(M)^{\times}$. Therefore, $M$ is endowed with a continuous $\Gamma$-action commuting with $\varphi$ such that the isomorphism $M\otimes_{A^{\Box}}A_{\infty}\simeq M_{\infty}$ is $\Gamma$-equivariant. As a consequence, if we regard $M$ as an \'etale $(\varphi,\Gamma)$-module over $A^{\Box}$, then $M\otimes_{A^{\Box}}A_{\infty}$ is isomorphic to $M_{\infty}$ in $\Et\Phi\Gamma\rM(A_{\infty})$. This gives the essential surjectivity as desired.
		
		We have established the equivalence parts of both $(1)$ and $(2)$. It follows from some standard linear algebra-theoretic arguments that these are both tensor equivalences.
	\end{proof}
	
	Let $\Crys({\calO_{\Prism}[\frac{1}{\xi}]^{\wedge}_p},\varphi)$ be the category of prismatic $F$-crystals over ${\calO_{\Prism}[\frac{1}{\xi}]^{\wedge}_p}$ on $(R/\Ainf)_{\Prism}$. For any $\calM\in \Crys({\calO_{\Prism}[\frac{1}{\xi}]^{\wedge}_p},\varphi)$, we get an \'etale $\varphi$-module $\ev^{\Box}(\calM)$ by evaluating it at $(A^{\Box}_{\inf}(R)\rightarrow R=R)\in (R/\Ainf)_{\Prism}$. This gives a functor \[\ev^{\Box}: \Crys({\calO_{\Prism}[\frac{1}{\xi}]^{\wedge}_p},\varphi)\rightarrow \Et\Phi\rM(\widehat{A^{\Box}_{\inf}(R)[\frac{1}{\xi}]}).\]
	Similarly, the evaluation at $(A_{\inf}(R_{\infty})\rightarrow R_{\infty}\leftarrow R)\in (R/\Ainf)_{\Prism}$ gives a functor
	\[\ev_{\infty}: \Crys({\calO_{\Prism}[\frac{1}{\xi}]^{\wedge}_p},\varphi)\rightarrow \Et\Phi\rM(\widehat{\Ainf(R_{\infty})[\frac{1}{\xi}]}).\]
	Therefore, we get a commutative diagram as follows:
	\begin{equation}\label{Equ-commutative diagram of evulations-Phi}
		\xymatrix@C=0.5cm{
			& \ar[ld]_{\ev^{\Box}}\Crys({\calO_{\Prism}[\frac{1}{\xi}]^{\wedge}_p},\varphi)\ar[rd]^{\ev_{\infty}}&\\
			\Et\Phi\rM(\widehat{A^{\Box}_{\inf}(R)[\frac{1}{\xi}]})\ar[rr]_{-\otimes_{\widehat{A^{\Box}_{\inf}(R)[\frac{1}{\xi}]}}\widehat{\Ainf(R_{\infty})[\frac{1}{\xi}]}}&&\Et\Phi\rM(\widehat{\Ainf(R_{\infty})[\frac{1}{\xi}]})
			.}
	\end{equation}
	
	
	\begin{lem}\label{Lem-The first evaluation}
		The evaluation functor $\ev^{\Box}$ can be upgraded to a functor from the category $\Crys({\calO_{\Prism}[\frac{1}{\xi}]^{\wedge}_p},\varphi)$ to the category $\Et\Phi\Gamma\rM(\widehat{A^{\Box}_{\inf}(R)[\frac{1}{\xi}]})$.
	\end{lem}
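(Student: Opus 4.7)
The plan is to construct the $\Gamma$-action on $\ev^{\Box}(\calM)$ by first producing a natural $\Gamma$-action on $\ev_{\infty}(\calM)$ from the crystal axiom, and then descending it along the equivalence of Proposition \ref{Prop-equivalence of phi-Gamma modules}(2).

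First I would check that every $\gamma \in \Gamma$ defines an automorphism of the prism $(\Ainf(R_\infty),(\xi))$ as an object of $(R/\Ainf)_{\Prism}$. By Remark \ref{Rmk-Perfection}, $\gamma$ is an $\Ainf$-algebra automorphism of $\Ainf(R_\infty)$ commuting with $\varphi$, and hence a $\delta$-automorphism preserving the ideal $(\xi)$. Since $\gamma_i(T_j) = [\epsilon]^{\delta_{ij}} T_j$ with $[\epsilon] \equiv 1 \pmod{\xi}$, the reduction modulo $\xi$ fixes each $T_i \in R$, so $\gamma$ induces an $R$-algebra automorphism of $R_\infty$. This identifies $\Gamma$ with a subgroup of automorphisms of $(\Ainf(R_\infty),(\xi))$ in the prismatic site. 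Applying the crystal $\calM$ to each such $\gamma$ yields a semi-linear action of $\Gamma$ on $\ev_\infty(\calM)$ via the functoriality isomorphisms $\calM(\gamma)$, and this action commutes with the crystal Frobenius $\varphi_\calM$ because the latter is compatible with base change while $\gamma$ commutes with $\varphi$.

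Next I would verify continuity of the resulting $\Gamma$-action on $\ev_\infty(\calM)$. Since $\ev_\infty(\calM)$ is finite projective over $\widehat{\Ainf(R_\infty)[\tfrac{1}{\xi}]}$, one can Zariski-locally trivialize it and express the cocycle $\gamma \mapsto \calM(\gamma)$ as a map $\Gamma \to \GL_n\bigl(\widehat{\Ainf(R_\infty)[\tfrac{1}{\xi}]}\bigr)$; continuity then reduces to the continuity of the $\Gamma$-action on the base ring, which is part of Remark \ref{Rmk-Perfection}. This is the only real subtlety I anticipate, and it is dispatched by the standard cocycle argument together with the finite-projectivity hypothesis built into the definition of a prismatic $F$-crystal.

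Finally, with $\ev_\infty(\calM)$ promoted to an \'etale $(\varphi,\Gamma)$-module over $\widehat{\Ainf(R_\infty)[\tfrac{1}{\xi}]}$, the essential surjectivity of Proposition \ref{Prop-equivalence of phi-Gamma modules}(2) supplies an \'etale $(\varphi,\Gamma)$-module $N$ over $\widehat{A^{\Box}_{\inf}(R)[\tfrac{1}{\xi}]}$ whose base change recovers $\ev_\infty(\calM)$ as a $(\varphi,\Gamma)$-module. Combining the commutative diagram (\ref{Equ-commutative diagram of evulations-Phi}) with the fully faithfulness of Proposition \ref{Prop-equivalence of phi-Gamma modules}(1) forces a canonical identification $N \cong \ev^{\Box}(\calM)$ as \'etale $\varphi$-modules, thereby endowing $\ev^{\Box}(\calM)$ with the desired $(\varphi,\Gamma)$-module structure. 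Functoriality in $\calM$ is inherited at each stage from the naturality of the base change functors and of the equivalence in Proposition \ref{Prop-equivalence of phi-Gamma modules}.
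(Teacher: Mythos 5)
Your argument is correct, but it takes a more roundabout route than the paper's. The paper's proof is a direct one-step construction: by Remark \ref{Rmk-Perfection} the $\Gamma$-action on $\Ainf(R_{\infty})$ restricts, via the decomposition (\ref{Equ-Decomposition}), to an action on $A^{\Box}_{\inf}(R)$ itself, so each $\gamma\in\Gamma$ is already an automorphism of the prism $(A^{\Box}_{\inf}(R),(\xi))$ in $(R/\Ainf)_{\Prism}$, and applying the crystal property to these automorphisms puts the semilinear $\Gamma$-action directly on $\ev^{\Box}(\calM)$, with no descent needed. You instead run the same mechanism (crystal functoriality along prism automorphisms) at the perfect prism $(\Ainf(R_{\infty}),(\xi))$ to equip $\ev_{\infty}(\calM)$ with a $\Gamma$-action, and then descend using the essential surjectivity of Proposition \ref{Prop-equivalence of phi-Gamma modules}(2), the crystal isomorphism in diagram (\ref{Equ-commutative diagram of evulations-Phi}), and full faithfulness of Proposition \ref{Prop-equivalence of phi-Gamma modules}(1). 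This is logically sound: Proposition \ref{Prop-equivalence of phi-Gamma modules} precedes the lemma and does not use it, the transported structure is independent of the choice of descended object (any two choices are identified by a unique $\varphi$-module isomorphism which is automatically $\Gamma$-equivariant after base change, hence equivariant by faithfulness), so the construction is functorial; and it produces the same action as the paper's, since the paper's action base-changes to the automorphism-induced action on $\ev_{\infty}(\calM)$ (Remark \ref{Rmk-The first evaluation}). The detour buys nothing beyond the direct argument, but it costs only economy, not correctness.

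The one soft step is continuity. Asserting that continuity of the $\Gamma$-action on the module ``reduces to the continuity of the $\Gamma$-action on the base ring'' is not a valid reduction: after trivializing, one must also know that the matrix of the semilinear cocycle, $\gamma\mapsto C_{\gamma}\in\GL_n(\widehat{\Ainf(R_{\infty})[\frac{1}{\xi}]})$, varies continuously in $\gamma$, and this does not follow formally from continuity on the base ring. The clean way to see it is via the \v{C}ech nerve: the evaluation of $\calM$ at $\widehat{B^{1}_{\infty}[\frac{1}{\xi}]}\simeq \rC(\Gamma,\rW((R_{\infty}[\frac{1}{p}])^{\flat}))$ (Lemma \ref{Lem-Describe of perfect cover}) encodes all the $C_{\gamma}$ at once as matrices of continuous functions on $\Gamma$, and the resulting action agrees with the automorphism-induced one by the diagram chase in the proof of Theorem \ref{Thm-F crystals as Phi-Gamma modules} (see Remark \ref{Rmk-Description of Gamma-action}). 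In fairness, the paper's own proof of this lemma also passes over the continuity check in silence, so this is a gap in polish rather than a failure of the approach.
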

	\begin{proof}
		For any given $\calM\in \Crys({\calO_{\Prism}[\frac{1}{\xi}]^{\wedge}_p},\varphi)$, we only need to specify the $\Gamma$-action on $\ev^{\Box}(\calM)$. For simplicity, we denote $\widehat{A^{\Box}_{\inf}(R)[\frac{1}{\xi}]}$ by $A^{\Box}$.
		
		Let $M:=\ev^{\Box}(\calM)$. As we have seen in Remark \ref{Rmk-Perfection}, $A^{\Box}_{\inf}(R)$ admits a continuous $\Gamma$-action and hence one can regard elements in $\Gamma$ as automorphisms of the prism $(A^{\Box}_{\inf}(R),(\xi))$. As $\calM$ is a crystal, for any $\gamma\in \Gamma$, we get an isomorphism of $\varphi$-modules$\gamma^{\ast}M = M\otimes_{A^{\Box},\gamma}A^{\Box}\rightarrow M$ over $A^{\Box}$.
		In other words, we get a $\gamma$-action on $M$, which commutes with $\varphi$. As this holds for any $\gamma\in \Gamma$, we see $M$ is an \'etale $(\varphi,\Gamma)$-module as desired.
	\end{proof}
	\begin{rmk}\label{Rmk-The first evaluation}
		By the same argument, we see that $\ev_{\infty}$ can be upgraded to a functor from $\Crys({\calO_{\Prism}[\frac{1}{\xi}]^{\wedge}_p},\varphi)$ to  $\Et\Phi\Gamma\rM(\widehat{\Ainf(R_{\infty})[\frac{1}{\xi}]})$. By construction, the $\Gamma$-action on ${\ev_{\infty}}(\calM)$ induced by $-\otimes_{\widehat{A^{\Box}_{\inf}(R)[\frac{1}{\xi}]}}\widehat{\Ainf(R_{\infty})[\frac{1}{\xi}]}$ is compatible with the action induced by viewing $\Gamma$ as automorphisms of the prism $(\Ainf(R_{\infty}),(\xi))$ and taking pull-backs as in the proof of Lemma \ref{Lem-The first evaluation}. We still denote the resulting functor
		\[\Crys({\calO_{\Prism}[\frac{1}{\xi}]^{\wedge}_p},\varphi)\rightarrow \Et\Phi\Gamma\rM(\widehat{\Ainf(R_{\infty})[\frac{1}{\xi}]})\]
		by $\ev^{\Box}$ when the contexts are clear for the sake that we shall give another description of $\Gamma$-actions on evaluations of $\ev_{\infty}$ in a moment.
	\end{rmk}
	\begin{rmk}
		We will see $\ev^{\Box}$ is indeed an equivalence (See Proposition \ref{Prop-The second evaluation} and Theorem \ref{Thm-F crystals as Phi-Gamma modules}).
	\end{rmk}
	
	Now, we are going to study $\ev_{\infty}$. As $(A_{\inf}(R_{\infty})\rightarrow R_{\infty}\leftarrow R)$ is  a cover of the final object of the topos  ${\rm Shv}(R/\Ainf)_{\Prism}$, the evaulation of an $F$-crystal on the prism $(A_{\inf}(R_{\infty})\rightarrow R_{\infty}\leftarrow R)$ gives an object in $\Et\Phi\rM(\Ainf(R_{\infty}))$ with stratifications as what Morrow-Tsuji did in \cite[$\S$ 3.1]{MT}. More precisely, let $(A^{\bullet}_{\infty},(\xi))$ be the \v{C}ech nerve of $(\Ainf(R_{\infty}),(\xi))$ in $(R/\Ainf)_{\Prism}$. Denote $p_0,p_1: A_{\infty}^0\rightarrow A_{\infty}^1$ and $p_{01},p_{02},p_{12}: A_{\infty}^1\rightarrow A_{\infty}^2$ the face maps in low degrees and denote $\Delta:A_{\infty}^1\rightarrow A_{\infty}^0$ the degeneracy map in degree $1$. Then we make the following definition as an analogue of \cite[Definition 3.14]{MT}.
	\begin{dfn}\label{Dfn-Stratification}
		An \emph{stratification} on an \'etale $\varphi$-module $M$ over $\widehat{A_{\infty}^0[\frac{1}{\xi}]} = \widehat{\Ainf(R_{\infty})[\frac{1}{\xi}]}$ with respect to $\widehat{A_{\infty}^{\bullet}[\frac{1}{\xi}]}$ is an isomorphism
		\[\varepsilon: M\otimes_{\widehat{A_{\infty}^0[\frac{1}{\xi}]},p_1}\widehat{A_{\infty}^1[\frac{1}{\xi}]}\rightarrow M\otimes_{\widehat{A_{\infty}^0[\frac{1}{\xi}]},p_0}\widehat{A_{\infty}^1[\frac{1}{\xi}]}\]
		of $\varphi$-modules over $\widehat{A_{\infty}^1[\frac{1}{\xi}]}$ satisfying the cocycle condition:
		\[p_{01}^{\ast}(\varepsilon)\circ p_{12}^{\ast}(\varepsilon) = p_{02}^{\ast}(\varepsilon): M\otimes_{\widehat{A_{\infty}^0[\frac{1}{\xi}]},q_2}\widehat{A_{\infty}^2[\frac{1}{\xi}]}\rightarrow M\otimes_{\widehat{A_{\infty}^0[\frac{1}{\xi}]},q_0}\widehat{A_{\infty}^2[\frac{1}{\xi}]}.\]
		
		We denote ${\rm Strat}(\widehat{A_{\infty}^{\bullet}[\frac{1}{\xi}]})$ the category of \'etale $\varphi$-modules over $\widehat{A_{\infty}^0[\frac{1}{\xi}]}$ with stratifications with respect to $\widehat{A_{\infty}^{\bullet}[\frac{1}{\xi}]}$.
	\end{dfn}
	
	\begin{lem}\label{Lem-Fcrystal as Stratification}
		The evaluation functor $\ev_{\infty}$ induces an equivalence between the category $\Crys({\calO_{\Prism}[\frac{1}{\xi}]^{\wedge}_p},\varphi)$  and the category ${\rm Strat}(\widehat{A_{\infty}^{\bullet}[\frac{1}{\xi}]})$.
	\end{lem}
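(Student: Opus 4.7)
The plan is to treat this as an instance of effective descent: since $(\Ainf(R_{\infty}),(\xi))$ covers the final object of the topos ${\rm Shv}((R/\Ainf)_{\Prism})$ by Lemma \ref{Lem-Special cover}, a prismatic $F$-crystal in $\calO_{\Prism}[\frac{1}{\xi}]^{\wedge}_p$-modules should be equivalent to a descent datum with respect to the \v{C}ech nerve of this cover valued in \'etale $\varphi$-modules. The stratification in Definition \ref{Dfn-Stratification} is precisely the data of such a descent datum.

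First I would check that $\ev_{\infty}$ genuinely lands in ${\rm Strat}(\widehat{A_{\infty}^{\bullet}[\frac{1}{\xi}]})$. Given $\calM \in \Crys(\calO_{\Prism}[\frac{1}{\xi}]^{\wedge}_p,\varphi)$, write $M := \calM(\Ainf(R_{\infty}))$. The two face maps $p_0,p_1 \colon A_{\infty}^0 \rightrightarrows A_{\infty}^1$ are morphisms in $(R/\Ainf)_{\Prism}$, so the crystal property yields canonical isomorphisms $p_i^{\ast} M \xrightarrow{\sim} \calM(A_{\infty}^1)$ of $\varphi$-modules over $\widehat{A_{\infty}^1[\frac{1}{\xi}]}$. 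Composing them gives $\varepsilon \colon p_1^{\ast} M \xrightarrow{\sim} p_0^{\ast} M$, and pulling back to degree $2$ together with a further application of the crystal property produces the cocycle identity; functoriality in $\calM$ is automatic.

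For the quasi-inverse, given $(M,\varepsilon)$ in ${\rm Strat}(\widehat{A_{\infty}^{\bullet}[\frac{1}{\xi}]})$, I would construct a crystal $\calM$ by descent. For any $\frakB = (\Spf(R) \leftarrow \Spf(B/\xi B) \rightarrow \Spf(B))$ in $(R/\Ainf)_{\Prism}$, the proof of Lemma \ref{Lem-Special cover} provides a cover $\frakB' \to \frakB$ in the prismatic topology such that $\frakB'$ receives a map from $(\Ainf(R_{\infty}),(\xi))$. The pullback of $(M,\varepsilon)$ along this map defines a descent datum for $\frakB' \to \frakB$, and faithfully flat descent for finite projective modules (applied after inverting $\xi$ and $p$-completing) produces a finite projective $\widehat{B[\frac{1}{\xi}]}$-module $\calM(\frakB)$ carrying the inherited $\varphi$-structure. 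The cocycle condition on $\varepsilon$ makes the construction independent, up to canonical isomorphism, of the auxiliary cover, and this yields functoriality in $\frakB$ as well as the crystal base-change property by the usual descent yoga. That the two constructions are mutually quasi-inverse is then immediate from the definitions.

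The main obstacle is verifying the effectiveness of finite projective module descent for the non-standard sheaf $\calO_{\Prism}[\frac{1}{\xi}]^{\wedge}_p$ along prismatic covers $\frakB' \to \frakB$: one must show that the corresponding map $\widehat{B[\frac{1}{\xi}]} \to \widehat{B'[\frac{1}{\xi}]}$ is sufficiently faithfully flat for this purpose. Since covers in $(R/\Ainf)_{\Prism}$ are $(p,\xi)$-completely faithfully flat, inverting $\xi$ preserves $p$-complete faithful flatness, and finite projectivity over a $p$-adically complete $p$-torsion-free ring is detected modulo each $p^n$, one reduces to ordinary fpqc descent of finite projective modules. Combined with the fact that the linearized Frobenius is an isomorphism on both sides (so descends along with the module), this gives the required equivalence.
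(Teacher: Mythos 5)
Your proof is correct and coincides with the paper's approach: the paper handles this lemma by citing \cite[Proposition 3.2]{Wu}, and your descent argument --- evaluating at the cover $(\Ainf(R_{\infty}),(\xi))$ of the final object from Lemma \ref{Lem-Special cover}, reading the stratification as a descent datum on the \v{C}ech nerve, and recovering the crystal on an arbitrary prism via faithfully flat descent of finite projective modules --- is exactly the content of that cited result. The only point demanding genuine care, as you yourself note, is effectivity of descent for vector bundles over $\widehat{B[\frac{1}{\xi}]}$ along $(p,\xi)$-completely faithfully flat prismatic covers (your sketch "inverting $\xi$ preserves $p$-complete faithful flatness" is the step that needs the real work), and this is precisely what the cited proposition, resp. the stack property of $\calO_{\Prism}[\frac{1}{I}]^{\wedge}_p$-vector bundles in Bhatt--Scholze, supplies.
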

	\begin{proof}
		This follows from \cite[Proposition 3.2]{Wu}.
	\end{proof}
	
	Although $(A_{\infty}^0,(\xi))$ is a perfect prism, for any $i\geq 1$, $(A_{\infty}^i,(\xi))$ is not perfect. We will replace the cosimplicial object $(A_{\infty}^{\bullet},(\xi))$ by another one $B_{\infty}^{\bullet}$, of which each term is perfect. The $(B_{\infty}^{\bullet},(\xi))$ is constructed as follows.
	
	By the same argument as in the proof of Lemma \ref{Lem-Special cover}, we see $(\Ainf(R_{\infty}),(\xi))$ is also a cover of the final object in ${\rm Shv}((R/\Ainf)_{\Prism}^{\perf})$. Let $(B_{\infty}^{\bullet},(\xi))$ be the \v{C}ech nerve of $(\Ainf(R_{\infty}),(\xi))$ in $(R/\Ainf)^{\perf}_{\Prism}$.
	
	\begin{lem}\label{Lem-Perfect special cover}
		For any $i\geq 0$, $(B_{\infty}^i,(\xi))$ is the perfection of $(A_{\infty}^i,(\xi))$.
	\end{lem}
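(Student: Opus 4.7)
The plan is to identify both cosimplicial rings with coproducts in the respective categories of prisms and then apply the universal property of perfection. Concretely, by construction the \v{C}ech nerve of a cover of the final object is the simplicial object whose $i$-th term is the $(i+1)$-fold product of that cover with itself; passing from the site to its opposite category of prisms, this product becomes the $(i+1)$-fold coproduct. Thus $(A_{\infty}^i,(\xi))$ is the $(i+1)$-fold coproduct of $(\Ainf(R_{\infty}),(\xi))$ with itself in the category of bounded prisms in $(R/\Ainf)_{\Prism}$, while $(B_{\infty}^i,(\xi))$ is the analogous $(i+1)$-fold coproduct computed in $(R/\Ainf)_{\Prism}^{\perf}$.

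Next, I would invoke the fact (see \cite[Lemma 3.9]{BS}) that the perfection functor $(-)^{\perf}$ is left adjoint to the inclusion of perfect prisms into all bounded prisms (restricted to the relevant slice over $R$). Being a left adjoint, perfection preserves all colimits, and in particular finite coproducts. Consequently, applying perfection termwise commutes with forming the $(i+1)$-fold coproducts above, which yields a canonical identification
\[
(B_{\infty}^i,(\xi)) \;\cong\; \bigl((A_{\infty}^i,(\xi))\bigr)^{\perf}
\]
for every $i\geq 0$. For $i=0$ the statement is tautological since $(\Ainf(R_{\infty}),(\xi))$ is already perfect, and for $i\geq 1$ one gets the asserted description.

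Finally, I would check that these identifications are compatible with the face and degeneracy maps: these maps are induced by the universal property of coproducts (inclusions of factors and codiagonals), and since perfection is a functor, it takes the structural maps between the $A_{\infty}^i$ to the corresponding structural maps between the $B_{\infty}^i$. This gives the desired identification of cosimplicial prisms. The only subtle point to verify is that the coproduct in the site $(R/\Ainf)_{\Prism}^{\perf}$ really does agree with the perfection of the coproduct in $(R/\Ainf)_{\Prism}$ (equivalently, that the relevant slice category is stable under the perfection/adjunction formalism of \cite[Lemma 3.9]{BS}); this is the step I would expect to require the most care, but it follows directly once one unwinds that in both sites, coproducts are computed as pushouts in prisms over $(\Ainf,(\xi))$ together with a compatible map of the reduction to $R$, a datum that is preserved by perfection since perfection is the identity modulo the distinguished ideal up to $p$-completed colimit.
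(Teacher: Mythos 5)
Your proposal is correct and is essentially the paper's own argument in categorical dress: the paper characterizes $(A_{\infty}^i,(\xi))$ and $(B_{\infty}^i,(\xi))$ as initial objects among (perfect) prisms in the slice receiving $(i+1)$ morphisms from $(\Ainf(R_{\infty}),(\xi))$ and then constructs the two mutually inverse maps $f,g$ by hand, which is exactly the unwound form of your statement that the perfection functor of \cite[Lemma 3.9]{BS}, being left adjoint to the inclusion, carries the coproduct computed in $(R/\Ainf)_{\Prism}$ to the coproduct in $(R/\Ainf)_{\Prism}^{\perf}$. The slice-compatibility point you flag is handled in the paper simply by composing the structure map $R\to A_{\infty}^i/\xi$ with the canonical map to the perfection, so there is no gap.
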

	\begin{proof}
		Clearly, $A_{\infty}^0 = \Ainf(R_{\infty}) = B^0_{\infty}$. For $i\geq 1$, by definition of $(A_{\infty}^i,(\xi))$, it is the initial object among the category of prisms $(A,I)\in (R/\Ainf)_{\Prism}$ together with $(i+1)$-morphisms from $(A_{\infty}^0,(\xi))$ to $(A,I)$. Similarly, $(B_{\infty}^i,(\xi))$ is the initial object among the category of perfect prisms $(B,J)\in (R/\Ainf)_{\Prism}^{\perf}$ together with $(i+1)$-morphisms from $(A_{\infty}^0,(\xi))$ to $(B,J)$. In particular, we get a unique morphism $(A_{\infty}^i,(\xi))\rightarrow (B_{\infty}^i,(\xi))$ of prisms, which factors uniquely through the perfection $(A_{\infty}^{i},(\xi))^{\perf}$ of the source. Denote this morphism by $f: (A_{\infty}^{i},(\xi))^{\perf}\rightarrow (B_{\infty}^i,(\xi))$. On the other hand, the $(i+1)$ arrows from $(A_{\infty}^0,(\xi))$ to $(A_{\infty}^i,(\xi))$ composed with the canonical morphism $(A_{\infty}^i,(\xi))\rightarrow (A_{\infty}^i,(\xi))^{\perf}$ induces a unique morphism $g:(B_{\infty}^i,(\xi))\rightarrow (A_{\infty}^{i},(\xi))^{\perf}$. By construction, both $f\circ g$ and $g\circ f$ are the identity morphisms on their domains, which completes the proof.
	\end{proof}
	
	Similar to Definition \ref{Dfn-Stratification}, one can define stratifications on \'etale $\varphi$-modules in $\Et\Phi\rM(\widehat{A_{\infty}^0[\frac{1}{\xi}]})$ with respect to $\widehat{B_{\infty}^{\bullet}[\frac{1}{\xi}]}$ and we denote ${\rm Strat}(\widehat{B_{\infty}^{\bullet}[\frac{1}{\xi}]})$ the corresponding category. Then we have the following lemma.
	
	\begin{lem}\label{Lem-Fcrystal as perfect Stratification}
		Let $M\in \Et\Phi\rM(\widehat{A_{\infty}^0[\frac{1}{\xi}]})$. Then $M$ admits a stratification with respect to $\widehat{A_{\infty}^{\bullet}[\frac{1}{\xi}]}$ if and only if it admits a stratification with respect to $\widehat{B_{\infty}^{\bullet}[\frac{1}{\xi}]}$.
		
		As a consequence, the evaluation functor $\ev_{\infty}$ induces an equivalence between the category $\Crys({\calO_{\Prism}[\frac{1}{\xi}]^{\wedge}_p},\varphi)$ and the category ${\rm Strat}(\widehat{B_{\infty}^{\bullet}[\frac{1}{\xi}]})$.
	\end{lem}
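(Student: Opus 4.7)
The plan is to reduce the claim to the level-wise equivalences furnished by Proposition \ref{Prop-Wu}. By Lemma \ref{Lem-Perfect special cover}, $(B_\infty^i,(\xi))$ is the perfection of the bounded prism $(A_\infty^i,(\xi))$ for every $i\geq 0$, and since $\varphi(\xi)\equiv\xi^p\pmod{p}$ is a non-zero divisor in $A_\infty^i/p$ (because $(p,\xi)$ is regular in any prism), Proposition \ref{Prop-Wu} yields a tensor equivalence
\[
F_i:\Et\Phi\rM(\widehat{A_\infty^i[\tfrac{1}{\xi}]})\xrightarrow{\simeq}\Et\Phi\rM(\widehat{B_\infty^i[\tfrac{1}{\xi}]})
\]
induced by base change along $\widehat{A_\infty^i[\tfrac{1}{\xi}]}\to\widehat{B_\infty^i[\tfrac{1}{\xi}]}$, with $F_0$ the identity (as $A_\infty^0=B_\infty^0=\Ainf(R_\infty)$ is already perfect).

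Next I would verify that the face and degeneracy maps of the two cosimplicial objects are intertwined by the $F_i$'s. Since perfection is functorial, every structure map $A_\infty^i\to A_\infty^j$ is covered by the corresponding structure map $B_\infty^i\to B_\infty^j$; hence for every $M\in\Et\Phi\rM(\widehat{A_\infty^0[\tfrac{1}{\xi}]})$ and every face map $p:A_\infty^0\to A_\infty^i$ one has a canonical isomorphism
\[
F_i\bigl(M\otimes_{\widehat{A_\infty^0[\tfrac{1}{\xi}]},p}\widehat{A_\infty^i[\tfrac{1}{\xi}]}\bigr)\;\simeq\;M\otimes_{\widehat{A_\infty^0[\tfrac{1}{\xi}]},p}\widehat{B_\infty^i[\tfrac{1}{\xi}]},
\]
functorial in $M$ and compatible with further composition of structure maps.

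With these compatibilities in place, the equivalence of stratification data becomes formal. The ``only if'' direction is immediate: base-change an $A$-stratification along $A_\infty^\bullet\to B_\infty^\bullet$. For the ``if'' direction, given a $B$-stratification $\varepsilon'$, fully faithfulness of $F_1$ produces a unique morphism $\varepsilon$ between the two $A_\infty^1$-base-changes of $M$ whose image under $F_1$ is $\varepsilon'$; since $F_1$ reflects isomorphisms (being an equivalence), $\varepsilon$ is itself an isomorphism. The cocycle identity for $\varepsilon$ on $A_\infty^2$ is an equality of morphisms that can be detected after applying $F_2$, where it reduces to the cocycle identity for $\varepsilon'$ on $B_\infty^2$. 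Combining this bijection of stratification categories with Lemma \ref{Lem-Fcrystal as Stratification} gives the ``consequence'' part.

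I expect the main obstacle to be purely bookkeeping: tracking that base change along the face/degeneracy maps and the perfection maps $A_\infty^i\to B_\infty^i$ genuinely commute, and that $F_i$ is compatible with the tensor identifications used to define the stratification and its cocycle condition. Beyond this, no geometric input is required beyond Proposition \ref{Prop-Wu} and Lemma \ref{Lem-Perfect special cover}.
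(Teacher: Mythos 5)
Your proof is essentially the paper's own argument, made explicit: the paper likewise applies Proposition \ref{Prop-Wu} level-wise to the perfections $(A_{\infty}^i,(\xi))^{\perf}=(B_{\infty}^i,(\xi))$ provided by Lemma \ref{Lem-Perfect special cover}, observing that all maps occurring in a stratification are isomorphisms of \'etale $\varphi$-modules over the \v{C}ech-nerve terms, and then concludes via Lemma \ref{Lem-Fcrystal as Stratification}. The only blemish is your parenthetical justification that the hypothesis of Proposition \ref{Prop-Wu} holds ``because $(p,\xi)$ is regular in any prism'' — this is false for general prisms (e.g.\ crystalline ones), though the hypothesis is indeed satisfied by the prismatic envelopes $A_{\infty}^i$ appearing here, a verification the paper itself also leaves implicit.
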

	\begin{proof}
		As all morphisms $\varepsilon$, $\Delta^{\ast}(\varepsilon)$, $p_{\bullet}^{\ast}(\varepsilon)$ for $\bullet\in \{01,02,12\}$ involved in the definition of stratifications (Definition \ref{Dfn-Stratification}) are isomorphisms of \'etale $\varphi$-modules. The lemma follows from Proposition \ref{Prop-Wu} and Lemma \ref{Lem-Fcrystal as Stratification} immediately.
	\end{proof}
	
	\begin{rmk}\label{Rmk-Realtion with Bhatt-Scholze}
		As $(\Ainf(R_{\infty}),(\xi))$ is a perfect prism, by deformation theory, to give $(i+1)$ morphisms from $(A_{\infty}^0,(\xi))$ to $(A,(\xi))$ in $(R/\Ainf)_{\Prism}$ is equivalently to give $(i+1)$ $R$-morphisms from $R_{\infty}$ to $A/I$, which is also equivalent to give an $R$-morphism $S^i\rightarrow A/I$ where $S^i = R_{\infty}\widehat \otimes_R\cdots\widehat \otimes_RR_{\infty}$ is the $(i+1)$-folds $p$-complete tensor product of $R_{\infty}$ over $R$. It is easy to check that $S_i$ is a quasi-regular semiperfectoid ring in the sense of \cite[Notation 7.1]{BS}. It follows from \cite[Proposition 7.2]{BS} that $(A_{\infty}^i,(\xi)) = (\Prism_{S_i}^{\mathrm{init}},(\xi))$.
	\end{rmk}
	
	It remains to consider stratifications with respect to $\widehat{B_{\infty}^{\bullet}[\frac{1}{\xi}]}$ on \'etale $\varphi$-modules over $\widehat{\Ainf(R_{\infty})[\frac{1}{\xi}]}$. To do so, we need to describe the \v{C}ech nerve $\widehat{B_{\infty}^{\bullet}[\frac{1}{\xi}]}$ in a more explicit way.
	
	\begin{lem}\label{Lem-Describe of perfect cover}
		$(1)$~ For $i = 0$, we have $\widehat{B_{\infty}^{0}[\frac{1}{\xi}]} = \rW((R_{\infty}[\frac{1}{p}])^{\flat})$. The ring $\Ainf(R_{\infty})$ with $(p,\xi)$-adic topology is an open bounded subring of the both sides.
		
		$(2)$~ In general, for any $i\geq 1$, $\widehat{B_{\infty}^{i}[\frac{1}{\xi}]} = \rC(\Gamma^i,\rW((R_{\infty}[\frac{1}{p}])^{\flat})$ is the ring of continuous functions on $\Gamma^i$. Moreover, $\widehat{B_{\infty}^{\bullet}[\frac{1}{\xi}]}\simeq \rC(\Gamma^{\bullet},\rW((R_{\infty}[\frac{1}{p}])^{\flat})$ is an isomorphism of cosimplicial rings.
	\end{lem}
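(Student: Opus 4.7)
For part (1), the plan is to apply the tilting equivalence for perfectoid rings. Since $R_\infty$ is perfectoid, $\Ainf(R_\infty) = \rW(R_\infty^\flat)$. To compute $\widehat{\Ainf(R_\infty)[\frac{1}{\xi}]}$, I would first reduce modulo $p$, where inverting $\xi$ has the effect of inverting a pseudo-uniformizer $\bar\xi$ of $R_\infty^\flat$. Since the tilt of the Tate ring $R_\infty[\frac{1}{p}]$ is precisely $R_\infty^\flat[\frac{1}{\bar\xi}]$, taking Witt vectors yields the identification $\widehat{\Ainf(R_\infty)[\frac{1}{\xi}]} = \rW((R_\infty[\frac{1}{p}])^\flat)$. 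The openness and boundedness of $\Ainf(R_\infty)$ in either ring follow from the corresponding facts that $R_\infty$ is an open bounded subring of $R_\infty[\frac{1}{p}]$ and that $R_\infty^\flat$ is an open bounded subring of $(R_\infty[\frac{1}{p}])^\flat$.

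For part (2), by Lemma \ref{Lem-Perfect special cover} and Remark \ref{Rmk-Realtion with Bhatt-Scholze}, $B_\infty^i$ is the perfection of $\Prism_{S^i}^{\mathrm{init}}$ where $S^i$ is the $(i+1)$-fold $p$-complete tensor power of $R_\infty$ over $R$. Under the equivalence between perfect prisms and perfectoid rings, $B_\infty^i$ corresponds to the perfectoidization $T^i$ of $S^i$ and $(B_\infty^i,(\xi))$ is canonically identified with $(\Ainf(T^i),(\xi))$. Applying part (1) to the perfectoid ring $T^i$, it then suffices to identify $T^i[\frac{1}{p}]$ with $\rC(\Gamma^i, R_\infty[\frac{1}{p}])$ as perfectoid Tate rings and to verify that Witt vectors commute with continuous functions into $\Gamma^i$, i.e.
\[
\rW\!\bigl(\rC(\Gamma^i,(R_\infty[\tfrac{1}{p}])^\flat)\bigr)=\rC\!\bigl(\Gamma^i,\rW((R_\infty[\tfrac{1}{p}])^\flat)\bigr),
\]
which holds because $\rC(\Gamma^i,-)$ is a cofiltered limit of finite products indexed by finite quotients of $\Gamma^i$ and $\rW$ commutes with finite products and cofiltered limits along surjections of perfect $\bF_p$-algebras.

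The central step is the Galois-descent identification $T^i[\frac{1}{p}]\simeq \rC(\Gamma^i, R_\infty[\frac{1}{p}])$. The plan is to argue on the level of adic generic fibers: the morphism $X_\infty=\Spa(R_\infty[\frac{1}{p}],R_\infty)\to X=\Spa(R[\frac{1}{p}],R)$ is a pro-\'etale Galois cover with group $\Gamma$ by Remark \ref{Rmk-Perfection}, so the standard isomorphism for Galois covers gives $X_\infty\times_X\cdots\times_X X_\infty \simeq X_\infty\times \Gamma^i$ as perfectoid spaces. Taking global sections on the right yields $\rC(\Gamma^i, R_\infty[\frac{1}{p}])$, which is itself perfectoid, and therefore coincides with $T^i[\frac{1}{p}]$ via the universal property of perfectoidization. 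The cosimplicial compatibility in the displayed isomorphism $\widehat{B_\infty^{\bullet}[\frac{1}{\xi}]}\simeq \rC(\Gamma^{\bullet},\rW((R_\infty[\frac{1}{p}])^\flat))$ will follow since the face and degeneracy maps on the left side arise from the group structure on $\Gamma$ under this identification.

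The main obstacle I expect is the careful verification of the Galois descent isomorphism on generic fibers and the check that this matches the cosimplicial structure on the Čech nerve of $(\Ainf(R_\infty),(\xi))$ in the perfect site. By the \'etaleness of $\Box:\calO\langle\underline T^{\pm 1}\rangle\to R$ and the explicit decomposition (\ref{Equ-Decomposition}), this can likely be reduced to the toric case $R=\calO\langle\underline T^{\pm 1}\rangle$, where the Galois action of $\Gamma=\bigoplus_{i=1}^d\Zp\gamma_i$ on $R_\infty$ is entirely explicit and the identification with continuous functions on $\Gamma^i$ can be written down directly.
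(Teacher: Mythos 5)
Your route is in outline the same as the paper's: part (1) by reduction modulo $p$ through tilting, and part (2) by identifying the \v{C}ech nerve rationally with $\Gamma^i$-valued continuous functions using the Galois structure of $R_\infty[\frac{1}{p}]$ over $R[\frac{1}{p}]$, then tilting and taking Witt vectors. The gap is at the step you yourself flag as the main obstacle, and it is the actual content of the lemma. You invoke ``the standard isomorphism for Galois covers'' to get $X_\infty\times_X\cdots\times_X X_\infty\simeq X_\infty\times\Gamma^i$ \emph{as perfectoid spaces}, but $X_\infty\to X$ is only a pro-\'etale (not finite \'etale) Galois cover over a non-perfectoid base, so no such isomorphism is available off the shelf: one must first show that the object in question --- the generic fibre of $B_\infty^i$, i.e.\ of the perfectoidization $T^i$ of $S^i$ --- represents the fibre product in the category of affinoid perfectoid spaces over $U=\Spa(R[\frac{1}{p}],R)$ (it is not a naive fibre product of adic spaces), and then actually trivialize this \v{C}ech nerve. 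The paper does exactly this by passing to diamonds: $V_0^\diamond\to U^\diamond$ is a $\Gamma$-torsor, hence $V_i^\diamond\simeq V_0^\diamond\times\underline{\Gamma}^i$, and the right-hand side is identified with the diamond of $\Spa(\rC(\Gamma^i,R_\infty)[\frac{1}{p}],\rC(\Gamma^i,R_\infty))$ via \cite[Example 11.12]{Sch-a} and \cite[Proposition 10.2.3]{SW}, after which one descends back to rings. Your proposal needs either this diamond argument or an explicit substitute (e.g.\ writing $R_\infty$ as a limit of finite-level covers, trivializing each level after inverting $p$, and passing to the limit); the suggested reduction to the toric case only simplifies the group action and does not remove this difficulty, since the perfectoidized \v{C}ech nerve of the toric tower poses the same problem.

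Two further points need repair. First, the universal property of the perfectoidization $T^i$ is integral (initial among perfectoid rings under $S^i$), while your torsor identification lives on generic fibres; bridging the two, and likewise applying your part (1) to $T^i$ --- which, unlike $R_\infty$, need not be $p$-torsion free or agree with a ring of integral elements of its generic fibre --- requires the almost-isomorphism statement that $S^i_\infty\to S^i_\infty[\frac{1}{p}]^+$ has kernel and cokernel killed by $\frakm_{\Cp}$; this is what lets the paper pass from $(S^i_\infty[\frac{1}{p}])^\flat$ back to $B_\infty^i[\frac{1}{\xi}]/p$, and your sketch omits it. Second, your justification of $\rW(\rC(\Gamma^i,-))=\rC(\Gamma^i,\rW(-))$ is stated backwards: for a discrete target, $\rC(\Gamma^i,A)$ is a \emph{filtered colimit} of finite products over finite quotients of $\Gamma^i$, not a cofiltered limit; the clean argument is the identity $\rW_m(\rC(\Gamma^i,B))=\rC(\Gamma^i,\rW_m(B))$ at each finite Witt level followed by the limit over $m$. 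These last two items are repairable details, but the torsor trivialization above cannot be left as an assertion.
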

	\begin{proof}
		$(1)$~ Since both sides are $p$-complete $\Zp$-algebras on which $\varphi$ acts as automorphisms, it is enough to verify the equality modulo $p$, which holds obviously.
		
		$(2)$~ We follow the idea of the proof of \cite[Lemma 5.3]{Wu}. Let $S_{\infty}^i = B_{\infty}^i/\xi$, which is a perfectoid algebra, for any $i\geq 0$. We claim that for any $i\geq 0$,
		\begin{equation}\label{Equ-Key point in Perfect cover}
			S_{\infty}^i[\frac{1}{p}] = \rC(\Gamma^i,R_{\infty})[\frac{1}{p}].
		\end{equation}
		
		By \cite[Theorem 3.10]{BS}, it follows from the construction of $B_{\infty}^{\bullet}$ that $S_{\infty}^{\bullet}$ is the \v{C}ech nerve associated to $R_{\infty}$ in the category of perfectoid $R$-algebras and $S_{\infty}^i$ is the initial object in the category of perfectoid $R$-algebras as the target of $(i+1)$ morphisms from $R_{\infty}$. As a consequence, $S_{\infty}^i[\frac{1}{p}]$ is the initial object in the category of perfectoid $R[\frac{1}{p}]$-algebras as the target of $(i+1)$ morphisms from $R_{\infty}$.
		
		Let $S_{\infty}^i[\frac{1}{p}]^+$ be the $p$-adic completion of the integral closure of $S_{\infty}^i$ in $S_{\infty}^i[\frac{1}{p}]$. Then it is almost isomorphic to the quotient of $S_{\infty}^i$ by its $p$-torsion part, which is again a perfectoid algebra for all $i\geq 0$. More precisely, the natural map
		\[S_{\infty}^i\to S_{\infty}^i[\frac{1}{p}]^+\]
		has kernel and cokernel which are both killed by $\frakm_{\Cp}$.
		From this, we see that $V_i: = \Spa(S_{\infty}^i[\frac{1}{p}],S_{\infty}^i[\frac{1}{p}]^+)$ is the final object in the category of affnoid perfectoid spaces over $U = \Spa(R[\frac{1}{p}],R)$ together with $(i+1)$ morphisms to $V_0 = \Spa(R_{\infty}[\frac{1}{p}],R_{\infty})$.
		
		We regard $U$ and $V_i$ for $i\geq 0$ as diamonds. Then the above argument shows that $V_i^{\diamond}$ is the $(i+1)$-folds fibre products of $V_0^{\diamond}$ over $U^{\diamond}$. As $V_1$ is a Galois pro-\'etale cover of $U$ with the Galois group $\Gamma$, we know that $V_1^{\diamond}/\Gamma = U^{\diamond}$ and that $V_1^{\diamond}\rightarrow U^{\diamond}$ is a $\Gamma$-torsor. Therefore, we deduce that $V_i^{\diamond}\simeq V_1^{\diamond}\times\underline \Gamma^i$. Since all spaces involved are defined over $\Spa(\Cp,\calO_{\Cp})$, by \cite[Example 11.12]{Sch-a}, we see that $V_1^{\diamond}\times\underline \Gamma^i\simeq V_1^{\diamond}\times_{\Spa(\Cp,\calO_{\Cp})^{\diamond}}\Spa(\rC(\Gamma^i,\Cp),\rC(\Gamma^i,\calO_{\Cp}))^{\diamond}$.
		By \cite[Proposition 10.2.3]{SW}, we have
		\begin{equation*}
			\begin{split}
				& V_1^{\diamond}\times_{\Spa(\Cp,\calO_{\Cp})^{\diamond}}\Spa(\rC(\Gamma^i,\Cp),\rC(\Gamma^i,\calO_{\Cp}))^{\diamond}\\
				\simeq ~& ~\Spa(R_{\infty}\widehat \otimes_{\calO_{\Cp}}\rC(\Gamma^i,\calO_{\Cp})[\frac{1}{p}],R_{\infty}\widehat \otimes_{\calO_{\Cp}}\rC(\Gamma^i,\calO_{\Cp}))^{\diamond}\\
				\simeq~ & ~\Spa(\rC(\Gamma^i,R_{\infty})[\frac{1}{p}],\rC(\Gamma^i,R_{\infty}))^{\diamond}.
			\end{split}
		\end{equation*}
		Combining all isomorphisms above, we deduce that
		\[V_i^{\diamond}\simeq \Spa(\rC(\Gamma^i,R_{\infty})[\frac{1}{p}],\rC(\Gamma^i,R_{\infty}))^{\diamond}.\]
		By \cite[Proposition 10.2.3]{SW} again, we have $S_{\infty}^i[\frac{1}{p}] = \rC(\Gamma^i,R_{\infty})[\frac{1}{p}]$ for all $i\geq 0$ as desired.
		
		Similar to the proof of $(1)$, in order to prove $(2)$, we are reduced to verify
		\[\widehat{B_{\infty}^{\bullet}[\frac{1}{\xi}]}/p = B_{\infty}^{\bullet}[\frac{1}{\xi}]/p\simeq \rC(\Gamma^i,(R_{\infty}[\frac{1}{p}])^{\flat}).\]
		Combing the claim $(\ref{Equ-Key point in Perfect cover})$ with the tilting equivalence, we get $(S_{\infty}^i[\frac{1}{p}]^+)^{\flat}\simeq \rC(\Gamma^i,R_{\infty}^{\flat})$. On the other hand, since the map $S_{\infty}^i\twoheadrightarrow S_{\infty}^i[\frac{1}{p}]^+$ has kernel and cokernel which are annihilated by $\frakm_{\Cp}$, the composition $(S_{\infty}^i)^{\flat}\rightarrow (S_{\infty}^i[\frac{1}{p}]^+)^{\flat}\simeq \rC(\Gamma^i,R_{\infty}^{\flat})$ has kernel and cokernel which are annihilated by $\frakm_{\calO_{\Cp}^{\flat}}$, which implies that
		\[B_{\infty}^{\bullet}[\frac{1}{\xi}]/p\simeq (S_{\infty}^i[\frac{1}{p}])^{\flat}\simeq \rC(\Gamma^i,(R_{\infty}[\frac{1}{p}])^{\flat}).\]
		
		Finally, by chasing isomorphisms above, we see that \[\widehat{B_{\infty}^{\bullet}[\frac{1}{\xi}]}\simeq \rC(\Gamma^{\bullet},\rW((R_{\infty}[\frac{1}{p}])^{\flat})\]
		is an isomorphism of cosimplicial rings and complete the proof.
	\end{proof}
	
	\begin{exam}\label{Exam-low degree of cover}
		We identify $\widehat{B_{\infty}^{i}[\frac{1}{\xi}]}$ with $\rC(\Gamma^i,\rW((R_{\infty}[\frac{1}{p}])^{\flat})$ for $i\in \{0,1,2\}$ via the isomorphisms in Lemma \ref{Lem-Perfect special cover} $(2)$. Then
		\[p_j: \rW((R_{\infty}[\frac{1}{p}])^{\flat}) = \rC(\Gamma^0,\rW((R_{\infty}[\frac{1}{p}])^{\flat})\rightarrow \rC(\Gamma^1,\rW((R_{\infty}[\frac{1}{p}])^{\flat})\]
		for $j\in \{0,1\}$ is given by
		\begin{equation*}
			p_j(x) = \left\{
			\begin{array}{rcl}
				x, & \IF ~j=0\\
				\gamma(x), & \IF ~j=1
			\end{array}   .
			\right.
		\end{equation*}
		for any $x\in \rW((R_{\infty}[\frac{1}{p}])^{\flat})$ and $\gamma\in \Gamma$.
		The degeneracy morphism
		\[\Delta:\rC(\Gamma^1,\rW((R_{\infty}[\frac{1}{p}])^{\flat})\rightarrow\rW((R_{\infty}[\frac{1}{p}])^{\flat})\]
		is given by $\Delta(f) = f(1)$, for any continuous function $f:\Gamma\rightarrow \rW((R_{\infty}[\frac{1}{p}])^{\flat})$.
		For $j\in \{01,02,12\}$, for any $f\in \rC(\Gamma^1,\rW((R_{\infty}[\frac{1}{p}])^{\flat})$ and $\gamma_0,\gamma_1\in \Gamma$,
		\begin{equation*}
			p_j(f)(\gamma_0,\gamma_1) = \left\{
			\begin{array}{rcl}
				f(\gamma_0), & \IF ~j=01\\
				f(\gamma_0\gamma_1), & \IF ~j=02\\
				\gamma_0(f(\gamma_1)), & \IF ~j=12
			\end{array}.
			\right.
		\end{equation*}
	\end{exam}
	As an application of Lemma \ref{Lem-Describe of perfect cover}, we give the description of $\ev_{\infty}$.
	
	\begin{prop}\label{Prop-The second evaluation}
		The functor $\ev_{\infty}$ induces an equivalence of categories \[\Crys({\calO_{\Prism}[\frac{1}{\xi}]^{\wedge}_p},\varphi)\xrightarrow{\ev_{\infty}} \Et\Phi\Gamma\rM(\widehat{\Ainf(R_{\infty})[\frac{1}{\xi}]}).\]
	\end{prop}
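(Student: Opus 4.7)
The plan is to chain together the equivalences already established. By Lemma \ref{Lem-Fcrystal as perfect Stratification}, the evaluation functor $\ev_{\infty}$ induces an equivalence $\Crys({\calO_{\Prism}[\frac{1}{\xi}]^{\wedge}_p},\varphi) \simeq {\rm Strat}(\widehat{B_{\infty}^{\bullet}[\frac{1}{\xi}]})$, so it suffices to produce a natural equivalence ${\rm Strat}(\widehat{B_{\infty}^{\bullet}[\frac{1}{\xi}]}) \simeq \Et\Phi\Gamma\rM(\widehat{\Ainf(R_{\infty})[\frac{1}{\xi}]})$ whose composition with $\ev_{\infty}$ recovers the upgraded functor constructed in Remark \ref{Rmk-The first evaluation}.

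Set $A_{\infty}:=\widehat{\Ainf(R_{\infty})[\frac{1}{\xi}]}$. By Lemma \ref{Lem-Describe of perfect cover}, the cosimplicial ring $\widehat{B_{\infty}^{\bullet}[\frac{1}{\xi}]}$ is identified with $\rC(\Gamma^{\bullet}, A_{\infty})$, and the face maps are given explicitly by Example \ref{Exam-low degree of cover}. Given a stratification $\varepsilon: M\otimes_{A_{\infty},p_1}\rC(\Gamma,A_{\infty})\xrightarrow{\sim} M\otimes_{A_{\infty},p_0}\rC(\Gamma,A_{\infty})$ on an étale $\varphi$-module $M$ over $A_{\infty}$, I would define $\varepsilon_{\gamma}:M\rightarrow M$ for each $\gamma\in\Gamma$ by evaluating $\varepsilon$ at $\gamma$. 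Since $p_0(a)$ is the constant function $a$ while $p_1(a)(\gamma)=\gamma(a)$, each $\varepsilon_{\gamma}$ is $\gamma$-semilinear and commutes with $\varphi$. The cocycle condition, evaluated at $(\gamma_0,\gamma_1)\in\Gamma^2$ using the formulas for $p_{01},p_{02},p_{12}$ in Example \ref{Exam-low degree of cover}, becomes
\[
\varepsilon_{\gamma_0\gamma_1}=\varepsilon_{\gamma_0}\circ\gamma_0(\varepsilon_{\gamma_1}),
\]
which is precisely the condition for $\{\varepsilon_{\gamma}\}_{\gamma\in\Gamma}$ to define a semilinear $\Gamma$-action, and the degeneracy condition $\Delta^{\ast}(\varepsilon)=\id$ forces $\varepsilon_{\id}=\id$. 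Continuity of $\gamma\mapsto\varepsilon_{\gamma}$ is automatic: because $\varepsilon$ is by construction an isomorphism of modules over the continuous function ring $\rC(\Gamma,A_{\infty})$, the assignment must be continuous in $\gamma$. This equips $M$ with the structure of an étale $(\varphi,\Gamma)$-module.

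Conversely, any continuous semilinear $\Gamma$-action on $M$ commuting with $\varphi$ assembles, by the universal property of the continuous function ring, into a single morphism $\varepsilon$ of $\rC(\Gamma,A_{\infty})$-modules, and the group law translates into the cocycle and degeneracy identities. These two constructions are mutually inverse and manifestly tensor-compatible, yielding the required equivalence of categories. To verify compatibility with the $\Gamma$-action specified in Remark \ref{Rmk-The first evaluation}, one notes that both actions are defined as pullback along the automorphisms of the prism $(\Ainf(R_{\infty}),(\xi))$ induced by $\gamma\in\Gamma$: evaluating a crystal $\calM$ at $(\Ainf(R_{\infty}),(\xi))$ via $\gamma$ and via the identity yields the two modules related by $\varepsilon_{\gamma}$, which agrees with the crystal isomorphism $\gamma^{\ast}\ev_{\infty}(\calM)\simeq\ev_{\infty}(\calM)$.

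I expect the main technical subtlety to be precisely the identification of stratifications over $\rC(\Gamma^{\bullet},A_{\infty})$ with continuous $\Gamma$-actions; the cocycle and group-law equivalence is algebraically straightforward, but the continuity of $\gamma\mapsto\varepsilon_{\gamma}$ is not a formal consequence of the abstract cocycle and requires the explicit identification of the Čech nerve with rings of continuous functions provided by Lemma \ref{Lem-Describe of perfect cover}. That identification is the essential geometric input that makes the translation between prismatic $F$-crystals and étale $(\varphi,\Gamma)$-modules work at the level of evaluation on $(\Ainf(R_{\infty}),(\xi))$.
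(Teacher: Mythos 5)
Your argument is correct and takes essentially the same route as the paper: the paper likewise combines Lemma \ref{Lem-Fcrystal as perfect Stratification} with the identification $\widehat{B_{\infty}^{\bullet}[\frac{1}{\xi}]}\simeq \rC(\Gamma^{\bullet},\rW((R_{\infty}[\frac{1}{p}])^{\flat}))$ from Lemma \ref{Lem-Describe of perfect cover} and then translates stratifications into continuous semilinear $\Gamma$-actions by evaluating $\varepsilon$ at group elements, exactly the dictionary recorded in Remark \ref{Rmk-Description of Gamma-action}. The only nitpick is notational: for the $\gamma$-semilinear operators $\varepsilon_{\gamma}$ the group law coming from the cocycle condition is $\varepsilon_{\gamma_0\gamma_1}=\varepsilon_{\gamma_0}\circ\varepsilon_{\gamma_1}$ (your twisted formula is its linearized/matrix form), which does not affect the argument.
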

	\begin{proof}
		It follows from Lemma \ref{Lem-Describe of perfect cover} combined with the Galois descent that the evaluation functor $\ev_{\infty}$ can be upgraded to a functor to  $\Et\Phi\Gamma\rM(\widehat{\Ainf(R_{\infty})[\frac{1}{\xi}]})$. By Lemma \ref{Lem-Fcrystal as perfect Stratification}, $\ev_{\infty}$ is an equivalence.
	\end{proof}
	
	\begin{rmk}\label{Rmk-Description of Gamma-action}
		For any $i\geq 0$, denote $C^i:=\rC(\Gamma^i,(R_{\infty}[\frac{1}{p}])^{\flat})\simeq \widehat{B_{\infty}^i[\frac{1}{\xi}]}$.
		Let $M\in\Et\Phi\rM(\widehat{\Ainf(R_{\infty})[\frac{1}{\xi}]})$ with a stratification $\varepsilon: M\otimes_{C^0,p_1}C_1\rightarrow M\otimes_{C^0,p_0}C_1$ with respect to $C^{\bullet}$. We describe the induced $\Gamma$-action on $M$ explicitly as follows.
		
		For any $\gamma\in \Gamma$ and $m\in M$, define $\gamma(m):=\varepsilon(m\otimes_{p_1} 1)(\gamma)$; that is, if $\varepsilon(m\otimes_{p_1} 1) = \sum_{i=1}^dm_i\otimes_{p_0} f_i$ for $m_i\in M$ and $f_i\in C^1$, then $\gamma(m)=\sum_{i=1}^df_i(\gamma)m_i$. One can easily check this ``action'' is semi-linear. As $\Delta^{\ast}(\varepsilon)=\id_M$ is the identity morphism on $M$, we see $m = \sum_{i=1}^df_i(1)m_i$, which shows that $1\in \Gamma$ acts identically on $M$. Finally, for any $\gamma_0,\gamma_1\in \Gamma$, one can check that $p_{02}^{\ast}(\varepsilon)(m\otimes_{q_2}1)(\gamma_0,\gamma_1) = (\gamma_0\gamma_1)(m)$ and that $(p_{01}^{\ast}(\varepsilon)\circ p_{12}^{\ast}(\varepsilon))(\gamma_0,\gamma_1)=\gamma_0(\gamma_1(m))$. So we get a $\Gamma$-action on $M$.
	\end{rmk}
	\begin{rmk}\label{Rmk-Compare Gamma-action with Morrow-Tsuji}
		We have seen $\Gamma$ acts on $A_{\infty}^0=\Ainf(R_{\infty})$ as automorphisms. By the construction of $A_{\infty}^{\bullet}$, for any $i\geq 0$, $A_{\infty}^i$ is equipped with an action of $\Gamma^{i+1}$ such that the face and degeneracy morphisms are compatible with these actions. These actions extend to $\widehat{B_{\infty}^{\bullet}[\frac{1}{\xi}]}$ and thus for any $M\in\Et\Phi\rM(\widehat{A_{\infty}^{0}[\frac{1}{\xi}]})$ with a stratification, one can define a $\Gamma$-action on $M$ as what Morrow-Tsuji did in the paragraph below \cite[Remark 3.15]{MT}. One can check this $\Gamma$-action coincides with the one given in Proposition \ref{Prop-The second evaluation}.
	\end{rmk}
	
	Combining Proposition \ref{Prop-The second evaluation} with Lemma \ref{Lem-The first evaluation}, we get a diagram
	\begin{equation}\label{Equ-commutative diagram of evulations-PhiGamma}
		\xymatrix@C=0.5cm{
			& \ar[ld]_{\ev^{\Box}}\Crys({\calO_{\Prism}[\frac{1}{\xi}]^{\wedge}_p},\varphi)\ar[rd]^{\ev_{\infty}}&\\
			\Et\Phi\Gamma\rM(\widehat{A^{\Box}_{\inf}(R)[\frac{1}{\xi}]})\ar[rr]_{-\otimes_{\widehat{A^{\Box}_{\inf}(R)[\frac{1}{\xi}]}}\widehat{\Ainf(R_{\infty})[\frac{1}{\xi}]}}&&\Et\Phi\Gamma\rM(\widehat{\Ainf(R_{\infty})[\frac{1}{\xi}]})
			,}
	\end{equation}
	which is commutative after forgetting $\Gamma$-actions. The main theorem says that this diagram is indeed commutative.
	
	\begin{thm}\label{Thm-F crystals as Phi-Gamma modules}
		The above diagram $(\ref{Equ-commutative diagram of evulations-PhiGamma})$ is commutative such that all arrows involved are equivalences of categories.
	\end{thm}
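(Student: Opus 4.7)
The plan is to verify the two outstanding claims: (i) the diagram \eqref{Equ-commutative diagram of evulations-PhiGamma} commutes as a diagram of \'etale $(\varphi,\Gamma)$-modules (and not merely of \'etale $\varphi$-modules, which is already recorded from \eqref{Equ-commutative diagram of evulations-Phi}), and (ii) the evaluation functor $\ev^{\Box}$ is an equivalence. Once (i) is established, (ii) becomes automatic: the horizontal base-change arrow is an equivalence by Proposition \ref{Prop-equivalence of phi-Gamma modules}(2), and $\ev_{\infty}$ is an equivalence by Proposition \ref{Prop-The second evaluation}, so the two-out-of-three property for equivalences of categories forces $\ev^{\Box}$ to be an equivalence as well.

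To prove (i), I will fix $\calM \in \Crys(\calO_{\Prism}[\frac{1}{\xi}]^{\wedge}_p,\varphi)$ and check that the two natural $\Gamma$-actions on $\ev^{\Box}(\calM)\otimes_{\widehat{A^{\Box}_{\inf}(R)[\frac{1}{\xi}]}}\widehat{\Ainf(R_{\infty})[\frac{1}{\xi}]}$ agree: the action induced from the $\Gamma$-action on $\ev^{\Box}(\calM)$ by base change, and the action on $\ev_{\infty}(\calM)$. By Remark \ref{Rmk-Perfection}, the $\Gamma$-action on $A^{\Box}_{\inf}(R)$ is defined exactly as the restriction of the one on $\Ainf(R_{\infty})$, so the structural morphism of prisms $(A^{\Box}_{\inf}(R),(\xi))\to(\Ainf(R_{\infty}),(\xi))$ is $\Gamma$-equivariant. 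For each $\gamma\in\Gamma$, the compatibility square of pullback functors along $\gamma$ and along this structural morphism commutes, and the crystal condition on $\calM$ ensures that the resulting pullback isomorphisms are identified under base change. This shows that the base-changed action matches the action on $\ev_{\infty}(\calM)$ defined in Remark \ref{Rmk-The first evaluation} via viewing $\Gamma$ as automorphisms of the perfect prism.

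The last and most delicate point is to check that this pullback-induced action on $\ev_{\infty}(\calM)$ coincides with the stratification-induced action of Remark \ref{Rmk-Description of Gamma-action} that was used to upgrade $\ev_{\infty}$ to a functor into $\Et\Phi\Gamma\rM(\widehat{\Ainf(R_{\infty})[\frac{1}{\xi}]})$ in Proposition \ref{Prop-The second evaluation}. This is essentially what is asserted in Remark \ref{Rmk-Compare Gamma-action with Morrow-Tsuji}, and I expect the main technical content of the proof to lie precisely here: one needs to unpack Example \ref{Exam-low degree of cover} and trace through the identification $\widehat{B_{\infty}^1[\frac{1}{\xi}]}\simeq \rC(\Gamma,\rW((R_{\infty}[\frac{1}{p}])^{\flat}))$ to see that evaluating the stratification cocycle at $\gamma\in\Gamma$ reproduces the pullback of $\calM$ along the automorphism of $(\Ainf(R_{\infty}),(\xi))$ given by $\gamma$; the cocycle and normalization conditions then become precisely the associativity and the identity axiom for a $\Gamma$-action. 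After this identification the commutativity of \eqref{Equ-commutative diagram of evulations-PhiGamma} follows, and combining it with the two known equivalences yields (ii).
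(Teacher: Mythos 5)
Your plan is essentially the paper's own proof: reduce via Remark \ref{Rmk-The first evaluation} to showing the two upgraded functors into $\Et\Phi\Gamma\rM(\widehat{\Ainf(R_{\infty})[\frac{1}{\xi}]})$ coincide, locate the whole difficulty in matching the pullback-induced $\Gamma$-action with the stratification-induced one of Remark \ref{Rmk-Description of Gamma-action}, and then deduce that $\ev^{\Box}$ is an equivalence from Proposition \ref{Prop-equivalence of phi-Gamma modules}(2), Proposition \ref{Prop-The second evaluation} and the commutativity. The single step you leave as an expectation --- that evaluating the stratification cocycle at $\gamma$ reproduces the pullback of $\calM$ along the automorphism $\gamma$ of $(\Ainf(R_{\infty}),(\xi))$ --- is precisely what the paper verifies, by writing down two commutative squares of prisms relating $p_0,p_1$ to $\gamma$, $1$ and $(1,\gamma)$, evaluating $\calM$ along them, and chasing an element to get $\varepsilon(\gamma(m)\otimes_{p_1}1)=\varepsilon(m\otimes_{p_1}1)\otimes_{(1,\gamma)}1$ and hence $\gamma(m)=\sum_i f_i(\gamma)m_i$; so your sketch is the correct one and only needs this explicit check written out.
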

	\begin{proof}
		By Remark \ref{Rmk-The first evaluation}, to check the commutativity, it suffices to see the two functors
		\[\ev^{\Box},\ev_{\infty}:\Crys({\calO_{\Prism}[\frac{1}{\xi}]^{\wedge}_p},\varphi))\rightarrow \Et\Phi\Gamma\rM(\widehat{\Ainf(R_{\infty})[\frac{1}{\xi}]})\]
		coincide.
		
		For any given $\calM\in\Crys({\calO_{\Prism}[\frac{1}{\xi}]^{\wedge}_p},\varphi)$, let $\ev_{C^i}(\calM)$ be the evaluation of $\calM$ at $(B_{\infty}^i,(\xi))$ as \'etale $\varphi$-modules in $\Et\Phi\rM(\rC(\Gamma^i,\rW((R_{\infty}[\frac{1}{p}])^{\flat})))$. For any $\gamma\in \Gamma$, we claim the following diagram
		\begin{equation}\label{Equ-F-crystal as Phi-Gamma modules}
			\xymatrix@C=0.4cm{
				p_1^{\ast}\gamma^{\ast}\ev_{C^0}(\calM)\ar@{=}[r]\ar[d]& (1,\gamma)^{\ast}p_1^{\ast}\ev_{C^0}(\calM)\ar[r]&(1,\gamma)^{\ast}\ev_{C^1}(\calM)\ar[d]&
				\ar[l](1,\gamma)^{\ast}p_0^{\ast}\ev_{C^0}(\calM)\ar@{=}[d]\\
				p_1^{\ast}\ev_{C^0}(\calM)\ar[rr]&& \ev_{C^1}(\calM)&\ar[l]p_0^{\ast}\ev_{C^0}(\calM)
			}
		\end{equation}
		commutes with all arrows being isomorphisms of \'etale $\varphi$-modules over $\rC(\Gamma,\rW((R_{\infty}[\frac{1}{p}])^{\flat}))$, where $(1,\gamma)$ acts on $\widehat{B_{\infty}^1[\frac{1}{\xi}]}\simeq\rC(\Gamma,\rW((R_{\infty}[\frac{1}{p}])^{\flat}))$ as explained in Remark \ref{Rmk-Compare Gamma-action with Morrow-Tsuji} and for any continuous function $f:\Gamma\rightarrow \rC(\Gamma,\rW((R_{\infty}[\frac{1}{p}])^{\flat}))$, $(1,\gamma)(f)(\bullet) = f(\bullet\gamma)$.
		
		In fact, by Remark \ref{Rmk-Compare Gamma-action with Morrow-Tsuji}, we have the following commutative diagram of prisms
		\[\xymatrix@C=0.45cm{
			(B_{\infty}^0,(\xi))\ar[r]^{\gamma}\ar@{.>}[d]\ar[d]_{p_1}&(B_{\infty}^0,(\xi))\ar[d]^{p_1}\\
			(B_{\infty}^1,(\xi))\ar[r]_{(1,\gamma)}&(B_{\infty}^1,(\xi))
		}.\]
		Considering the evaluations of $\calM$ along this diagram, we see the left square of \ref{Equ-F-crystal as Phi-Gamma modules} is commutative. Similarly, one can prove the right square commutes by considering the diagram
		\[\xymatrix@C=0.45cm{
			(B_{\infty}^0,(\xi))\ar[r]^{1}\ar@{.>}[d]\ar[d]_{p_0}&(B_{\infty}^0,(\xi))\ar[d]^{p_0}\\
			(B_{\infty}^1,(\xi))\ar[r]_{(1,\gamma)}&(B_{\infty}^1,(\xi)).
		}.\]
		
		We explain how the claim implies the compatibility of $\ev^{\Box}$ and $\ev_{\infty}$. In fact, for any $m\in \ev_{\infty}(\calM)$ (just as a $\varphi$-module and so is equal to $\ev^{\Box}(\calM)$), consider $m\otimes_{\gamma}1\otimes_{p_1}1\in p_1^{\ast}\gamma^{\ast}\ev_{C^0}(\calM)$, its image in $p_0^{\ast}\ev_{C^0}(\calM)$ via the left-bottom arrows is $\varepsilon(\gamma(m)\otimes_{p_1}1)$ with $\gamma$-action being induced via $\ev^{\Box}$ and the image via the top-right arrows in $\varepsilon(m\otimes_{p_1}1)\otimes_{(1,\gamma)}1$. In other words, we have
		\[\varepsilon(\gamma(m)\otimes_{p_1}1)=\varepsilon(m\otimes_{p_1}1)\otimes_{(1,\gamma)}1.\]
		Assume $\varepsilon(m\otimes_{p_1}1)=\sum_{i=1}^dm_i\otimes_{p_0}f_i$ for $m_i\in M$ and $f_i\in \rC(\Gamma,\rW((R_{\infty}[\frac{1}{p}])^{\flat}))$. Evaluating both sides at $1\in \Gamma$, we get
		\[\gamma(m) = \sum_{i=1}^df_i(\gamma)m_i.\]
		According to Remark \ref{Rmk-Description of Gamma-action}, the right hand side is the $\gamma$-action on $m$, which shows the compatibility as desired.

		The last assertion follows from Proposition \ref{Prop-equivalence of phi-Gamma modules} $(2)$, Proposition \ref{Prop-The second evaluation} and the commutativity of the diagram $(\ref{Equ-commutative diagram of evulations-PhiGamma})$.
	\end{proof}
	\begin{rmk}\label{Rmk-General groups}
		One can replace $R_{\infty}$ by $\widehat{\overline R}$ the $p$-adic completion of the normalization of $R$ in the filtered colimit of all finite \'etale Galois extensions of $R$ which are unramified outside $p$, that is, $\Spa(\widehat{\overline R}[\frac{1}{p}],\widehat{\overline R})$ is the ``universal cover'' of $\Spa(R[\frac{1}{p}],R)$ and replace $\Gamma$ by $G = \pi_1^{\et}(\Spa(R[\frac{1}{p}],R))$ (\cite[Example 5.5 (ii)]{MT}). It is easy to see $\widehat{\overline R}$ is a perfectoid $R$-alegbra and is also a quasi-syntomic cover of $R$. Let $\widetilde A=\widehat{\Ainf(\widehat{\overline R})[\frac{1}{\xi}]}$. Then, by the same argument as above, one can show that the evaluation at $(\widetilde A,(\xi))$ induces an equivalence from the category $\Crys({\calO_{\Prism}[\frac{1}{\xi}]^{\wedge}_p},\varphi)$ to the category of \'etale $(\varphi,G)$-modules over $\widetilde A$.
	\end{rmk}
	
	Until now we have assumed $R$ lives over $\calO$. By similar arguments as above, we have the following corollary.
	\begin{cor}\label{descend equivalence}
		Let $\Spf(R)$ be a $p$-adic smooth formal scheme over $\calO_K$ with a framing $\square:\calO_K\langle\underline{T}^{\pm1}\rangle\to R$, where $\calO_K$ is the ring of integers in a $p$-adic field $K$. Then there is an equivalence between the category of $F$-crystals over $\calO_{\Prism}[\frac{1}{I}]^{\wedge}_p$ on the absolute prismatic site $(R)_{\Prism}$ and the category of \'etale $(\varphi, G)$-modules over $\widehat{\Ainf(\widehat{\overline R})[\frac{1}{\xi}]}$, where $G = \pi_1^{\et}(\Spa(R[\frac{1}{p}],R))$.
	\end{cor}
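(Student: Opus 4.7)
The plan is to reduce to the relative situation over $\calO = \calO_{\Cp}$ already handled by Theorem \ref{Thm-F crystals as Phi-Gamma modules} (together with Remark \ref{Rmk-General groups}) and then perform a Galois descent along the pro-\'etale $G_K$-cover $\Spf(\calO) \to \Spf(\calO_K)$. Concretely, I set $R_{\calO} := R\widehat\otimes_{\calO_K}\calO$, which is a $p$-adic smooth formal scheme over $\calO$ with framing induced from $\square$. A key observation is that the universal pro-\'etale cover in the sense of Remark \ref{Rmk-General groups} is unchanged by this base change: $\widehat{\overline{R_\calO}} = \widehat{\overline R}$ as perfectoid algebras, since any finite \'etale Galois cover of $R[\frac{1}{p}]$ unramified outside $p$ remains one after base change to $R_\calO[\frac{1}{p}]$, and $\widehat{\overline R}$ already contains $\calO_{\Cp}$ as the completed union of the $\calO_{\overline K}$-extensions. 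In particular, $\widetilde A = \widehat{\Ainf(\widehat{\overline R})[\frac{1}{\xi}]}$ is the same ring in both the relative and absolute setups.

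Applying Theorem \ref{Thm-F crystals as Phi-Gamma modules} and Remark \ref{Rmk-General groups} to $R_\calO$ would give an equivalence between $\Crys(\calO_{\Prism}[\frac{1}{I}]^{\wedge}_p,\varphi)$ on $(R_\calO)_{\Prism}$ and the category of \'etale $(\varphi,H)$-modules over $\widetilde A$, where $H = \pi_1^{\et}(\Spa(R_\calO[\frac{1}{p}],R_\calO))$. Taking $G_K$-equivariance on both sides, the exact sequence
\[
1 \longrightarrow H \longrightarrow G \longrightarrow G_K \longrightarrow 1
\]
on the module side promotes \'etale $(\varphi,H)$-modules with a compatible $G_K$-action to \'etale $(\varphi,G)$-modules, while on the prismatic side the natural $G_K$-equivariance of the base change $(R)_{\Prism}\to(R_\calO)_{\Prism}$ identifies $G_K$-equivariant prismatic $F$-crystals on $(R_\calO)_{\Prism}$ with prismatic $F$-crystals on the absolute site $(R)_{\Prism}$. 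Concretely, the same diagram chase used to build the $\Gamma$-action in Lemma \ref{Lem-The first evaluation} and Remark \ref{Rmk-Description of Gamma-action}, applied to the additional automorphisms of the prism $(\Ainf(\widehat{\overline R}),(\xi))$ coming from $G_K \subset G$, upgrades the $(\varphi,H)$-module structure on the evaluation to a $(\varphi,G)$-module structure.

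Alternatively, one can bypass the base change entirely by redoing the local construction directly for the absolute site: the crucial ingredients are that $\widehat{\overline R}$ is a perfectoid $R$-algebra and a quasi-syntomic cover of $R$, which implies (as in Lemma \ref{Lem-Special cover}) that $(\Ainf(\widehat{\overline R}),(\xi))$ covers the final object of $\mathrm{Shv}((R)_{\Prism})$, and then the \v Cech nerve computation of Lemma \ref{Lem-Describe of perfect cover} carries over verbatim with $R_\infty$ replaced by $\widehat{\overline R}$ and $\Gamma$ replaced by $G$; this ultimately produces a commutative diagram analogous to $(\ref{Equ-commutative diagram of evulations-PhiGamma})$ whose equivalence is established in the same way as Theorem \ref{Thm-F crystals as Phi-Gamma modules}. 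The main obstacle is verifying the Galois-descent step, i.e.\ making precise the equivalence between prismatic $F$-crystals on $(R)_{\Prism}$ and $G_K$-equivariant prismatic $F$-crystals on $(R_\calO)_{\Prism}$; once one identifies both sides as stratifications with respect to the \v Cech nerve of $(\Ainf(\widehat{\overline R}),(\xi))$ in the relevant perfect prismatic sites, the extra $G_K$-action simply enlarges the continuous function rings $\rC(H^{\bullet},W((\widehat{\overline R}[\frac{1}{p}])^{\flat}))$ to $\rC(G^{\bullet},W((\widehat{\overline R}[\frac{1}{p}])^{\flat}))$, and the descent follows formally.
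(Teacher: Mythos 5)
Your second (``alternative'') route is exactly the paper's proof: the paper disposes of this corollary with ``by similar arguments as above,'' meaning precisely that one reruns the construction of Section \ref{local} on the absolute site $(R)_{\Prism}$ with $R_\infty$ replaced by $\widehat{\overline R}$ and $\Gamma$ by $G$ as in Remark \ref{Rmk-General groups}: $\widehat{\overline R}$ is a perfectoid quasi-syntomic cover of $R$, so $(\Ainf(\widehat{\overline R}),(\xi))$ covers the final object of $\mathrm{Shv}((R)_{\Prism})$, crystals are stratifications over its \v Cech nerve, perfection of the nerve is harmless by Proposition \ref{Prop-Wu}, and the perfect nerve is identified with $\rC(G^{\bullet},\rW((\widehat{\overline R}[\frac{1}{p}])^{\flat}))$ as in Lemma \ref{Lem-Describe of perfect cover} (all the diamond-theoretic steps still apply since $\widehat{\overline R}$ contains $\calO_{\Cp}$). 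Your first route (base change to $R_{\calO}=R\widehat\otimes_{\calO_K}\calO$ followed by $G_K$-descent) is a genuinely different packaging, but its pivotal step is not as formal as you suggest: the equivalence between $F$-crystals on the absolute site $(R)_{\Prism}$ and $G_K$-equivariant $F$-crystals on $(R_{\calO})_{\Prism}$ is not a tautology, because objects of $(R)_{\Prism}$ (e.g.\ Breuil--Kisin-type prisms) need not admit any structure over $(\Ainf,(\xi))$, so there is no naive base-change functor of sites to descend along; one must instead compare both sides through a common cover, namely $(\Ainf(\widehat{\overline R}),(\xi))$ and its \v Cech nerves in the two sites, together with the extension $1\to H\to G\to G_K\to 1$ (which also requires geometric connectedness, or a mild reformulation, for $H=\pi_1^{\et}(\Spa(R_{\calO}[\frac{1}{p}],R_{\calO}))$ to behave as stated). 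Once you do this, you have reproduced the direct argument, so the descent route buys no simplification; the direct route you sketch is complete and is the one the paper intends.
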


	\section{Prismatic crystals and relative $(\varphi, \Gamma)$-modules} \label{correspondence}
	
	Let $\frakX$ be a separated $p$-adic smooth formal scheme over $\calO_K$ and $X$ be its adic generic fiber. Let ${\rm Crys}(\calO_{\Prism}[\frac{1}{I}]^{\wedge}_p, \varphi)$ denote the category of $F$-crystals over $\calO_{\Prism}[\frac{1}{I}]^{\wedge}_p$ on the absolute prismatic site $(\frakX)_{\Prism}$.

	The main result of this section is the following theorem.
	\begin{thm}\label{local system}
		There is an equivalence of categories
		\[
		{\rm Crys}(\calO_{\Prism}[\frac{1}{I}]^{\wedge}_p, \varphi)\xrightarrow{\simeq} {\rm LS}(X_{\rm \acute et}, \bZ_p),
		\]
		where the right one is the category of \'etale $\bZ_p$-local systems on $X$.
	\end{thm}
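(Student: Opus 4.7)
The plan is to deduce the global equivalence from its affine/local counterpart, namely Corollary \ref{descend equivalence} together with Remark \ref{Rmk-General groups}, combined with the Kedlaya--Liu description of étale $\Zp$-local systems in terms of relative $(\varphi,G)$-modules.

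First, I would verify that both categories appearing in the statement satisfy descent for the Zariski topology on $\frakX$. For étale $\Zp$-local systems on $X$ this is standard. For the category of prismatic $F$-crystals over $\calO_{\Prism}[\frac{1}{I}]^{\wedge}_p$ on $\frakX_{\Prism}$, the descent follows from the identification of the localization of the absolute prismatic site of $\frakX$ at an affine open $\Spf(R)\subset\frakX$ with $(R)_{\Prism}$ (cf.\ Remark \ref{Rmk-Identify prismatic sites}), together with the sheaf condition built into the crystal datum. Thus it suffices to construct functorial equivalences for affine opens $\Spf(R)\subset\frakX$ admitting a framing over $\calO_K$, compatible with restriction along open immersions.

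For such an affine piece, Corollary \ref{descend equivalence} (in the variant of Remark \ref{Rmk-General groups}) provides an equivalence between the category of prismatic $F$-crystals over $\calO_{\Prism}[\frac{1}{I}]^{\wedge}_p$ on $(R)_{\Prism}$ and the category of étale $(\varphi,G_R)$-modules over $\widehat{\Ainf(\widehat{\overline R})[\frac{1}{\xi}]}$, with $G_R=\pi_1^{\et}(\Spa(R[\frac{1}{p}],R))$. I would then invoke the theorem of Kedlaya--Liu \cite{KL}, which identifies the category of such $(\varphi,G_R)$-modules with the category of étale $\Zp$-local systems on $\Spa(R[\frac{1}{p}],R)$. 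The composition gives the desired equivalence on the affine $\Spf(R)$.

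The main obstacle is the functoriality of these two affine equivalences under an étale base change $R\to R'$ coming from an open immersion $\Spf(R')\hookrightarrow\Spf(R)$ inside $\frakX$, since this is precisely what allows the local equivalences to glue. For the first equivalence, functoriality reduces to that of the construction $R\mapsto \widehat{\overline R}\mapsto \widehat{\Ainf(\widehat{\overline R})[\frac{1}{\xi}]}$ together with the crystal property, which identifies evaluations at the natural covering prisms for $R$ and $R'$ via base change. For the Kedlaya--Liu equivalence, the necessary compatibility is built into their pro-étale sheaf-theoretic construction of $(\varphi,G)$-modules on the generic fiber. Once these compatibilities are in place, the local equivalences assemble into a morphism of stacks on $\frakX_{\Zar}$ which is an equivalence on each affine open and hence globally, yielding the theorem.
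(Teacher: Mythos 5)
Your proposal is correct in outline and shares the paper's overall strategy (reduce via the Zariski stack property to a framed affine $\Spf(R)$, use Corollary \ref{descend equivalence}, and conclude with Kedlaya--Liu), but it assembles the local-system side differently, and one citation is looser than what \cite{KL} literally provides. The paper first invokes the sheaf-theoretic statement \cite[Theorem 9.3.7]{KL} to replace ${\rm LS}(X_{\rm \acute et},\bZ_p)$ by the category $\Et\Phi(W(\widehat{\calO}_{X^{\flat}}))$ of \'etale $\varphi$-modules over the pro-\'etale sheaf $W(\widehat{\calO}_{X^{\flat}})$, and then, for framed affine $\frakX$, constructs an explicit functor $\alpha$ by evaluating a crystal at the perfect prism $(W((S^+)^{\flat}),(\Ker(\theta)))$ attached to each affinoid perfectoid $V\in X_{\proet}$; the commutative triangle with $\ev_{\infty}$ and $\Gamma(U_{\infty},-)$, together with Corollary \ref{descend equivalence} and \cite[Corollary 9.3.8]{KL}, shows $\alpha$ is an equivalence, and naturality in $R$ is automatic because $\alpha$ is defined sheaf-theoretically on $X_{\proet}$. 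You instead compose Corollary \ref{descend equivalence} with a Galois-type statement that \'etale $(\varphi,G_R)$-modules over $\widehat{\Ainf(\widehat{\overline R})[\frac{1}{\xi}]}$ are equivalent to $\bZ_p$-local systems on $\Spa(R[\frac{1}{p}],R)$: be aware that this is not a single quotable theorem of \cite{KL}; it follows from \cite[Theorem 9.3.7]{KL} by descent along the pro-\'etale $G_R$-torsor $U_{\infty}\to X$ using the acyclicity and restriction results \cite[Lemma 9.3.4, Corollary 9.3.8]{KL}, which is in effect exactly what the paper's $\alpha$-construction and the identity $\calM(V)=\ker(\calM(V_{\infty})\to\calM(V_{\infty}\times_V V_{\infty}))$ supply, so you would need to write out that descent step. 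Likewise your gluing step must track how the groups $G_R$ and the rings $\widehat{\Ainf(\widehat{\overline R})[\frac{1}{\xi}]}$ vary under localization $R\to R'$, a bookkeeping burden the paper's sheaf-level formulation avoids; with these two points filled in, your argument goes through and buys essentially the same theorem by a slightly more Galois-theoretic, less sheaf-theoretic route.
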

	
	\begin{proof}
		By \cite[Theorem 9.3.7]{KL}, there is a natural equivalence between the category $\Et\Phi(W(\widehat{\calO}_{X^{\flat}}))$ of \'etale $\varphi$-modules over $W(\widehat{\calO}_{X^{\flat}})$ and the category ${\rm LS}(X_{\rm \acute et}, \bZ_p)$. So it reduces to proving a natural equivalence between the categroy ${\rm Crys}(\calO_{\Prism}[\frac{1}{I}]^{\wedge}_p, \varphi)$ and the category $\Et\Phi(W(\widehat{\calO}_{X^{\flat}}))$. This follows from the next theorem.

	\end{proof}
	
	Note that the above equivalence can be checked locally on small affine opens and then glued up globally.	In fact, the association $\frakX\mapsto {\rm Crys}((\frakX)_{\Prism},\calO_{\Prism}[\frac{1}{I}]^{\wedge}_p, \varphi))$ is a stack for the Zariski topology (for example, see \cite[Theorem 5.17]{MT}). So we assume $\frakX={\Spf(R)}$  such that there exists a framing $\square:\calO_K\langle\underline{T}^{\pm1}\rangle\to R$.
	
	\begin{thm}
		There exists a commutative diagram of categories
		
		\begin{equation}
			\xymatrix@C=0.5cm{
				& \ar[ld]_{\alpha}	{\rm Crys}(\calO_{\Prism}[\frac{1}{I}]^{\wedge}_p, \varphi)\ar[rd]^{\rm ev_{\infty}}&\\
				\Et\Phi(W(\widehat{\calO}_{X^{\flat}}))\ar[rr]_{\Gamma(U_{\infty}, -)}&&\Et\Phi G(\widehat{\Ainf(\widehat{\overline R})[\frac{1}{\xi}]})
				,}
		\end{equation}
		where all the functors are equivalences. Here $U_{\infty}$ is the affinoid perfectoid object corresponding to $(\widehat{\overline R}[\frac{1}{p}], \widehat{\overline R})$ as in Remark \ref{Rmk-General groups}. The functor $\rm ev_{\infty}$ is the evaulation on the object $(\widehat{\Ainf(\widehat{\overline R})[\frac{1}{\xi}]}, (\xi))$.
	\end{thm}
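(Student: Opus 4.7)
The strategy is to construct $\alpha$ directly by sheafifying an $F$-crystal on the pro-\'etale site of $X$, verify the triangle commutes by inspection, and then combine a Galois-descent argument with Corollary~\ref{descend equivalence} to conclude that all three arrows are equivalences.

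First I would define $\alpha$ as follows. For any affinoid perfectoid $U=\Spa(S,S^+)\in X_{\proet}$ lying over $\frakX$, the integral perfectoid $S^+$ determines, by the deformation theory cited in Remark~\ref{Rmk-Identify prismatic sites}, a canonical perfect prism $(\Ainf(S^+),(\xi))\in(\frakX)_{\Prism}$; and by the same computation as in Lemma~\ref{Lem-Describe of perfect cover}(1), one has $W(\widehat{\calO}_{X^\flat})(U)=\widehat{\Ainf(S^+)[\frac{1}{\xi}]}$. Given $\calM\in{\rm Crys}(\calO_{\Prism}[\frac{1}{I}]^{\wedge}_p,\varphi)$, I set
\[\alpha(\calM)(U):=\calM\bigl((\Ainf(S^+),(\xi))\bigr).\]
The crystal axiom makes this functorial in $U$, and the Frobenius on $\calM$ equips $\alpha(\calM)$ with an \'etale $\varphi$-structure. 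That $\alpha(\calM)$ is a pro-\'etale sheaf should follow by checking descent along any pro-\'etale cover by affinoid perfectoids via the stratification description of Lemma~\ref{Lem-Fcrystal as perfect Stratification} applied locally, together with sheafiness of $W(\widehat{\calO}_{X^\flat})$.

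Commutativity of the triangle is then essentially automatic. Evaluating $\alpha(\calM)$ at $U_\infty$ returns $\calM((\Ainf(\widehat{\overline R}),(\xi)))={\rm ev}_\infty(\calM)$ as a $\varphi$-module by construction, and the induced $G$-action matches: the Galois action on $U_\infty/X$ lifts uniquely to automorphisms of the prism $(\Ainf(\widehat{\overline R}),(\xi))$ by deformation theory (cf.~Remark~\ref{Rmk-Perfection} and Remark~\ref{Rmk-General groups}), and both the action coming from $\Gamma(U_\infty,-)$ and the one described in Remark~\ref{Rmk-Description of Gamma-action} are induced by pull-back along this same automorphism.

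For the bottom arrow, I would invoke Galois descent: since $U_\infty\to X$ is a pro-\'etale $G$-torsor by an affinoid perfectoid, evaluation at $U_\infty$ identifies $\Et\Phi(W(\widehat{\calO}_{X^\flat}))$ with the category of \'etale $(\varphi,G)$-modules over $W(\widehat{\calO}_{X^\flat})(U_\infty)=\widehat{\Ainf(\widehat{\overline R})[\frac{1}{\xi}]}$; this is a formal consequence of the sheaf-theoretic formalism in \cite[\S 9]{KL} together with the vanishing of higher pro-\'etale cohomology of $W(\widehat{\calO}_{X^\flat})$ on affinoid perfectoid covers. Since ${\rm ev}_\infty$ is already an equivalence by Corollary~\ref{descend equivalence}, the commutative triangle forces $\alpha$ to be an equivalence as well. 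I expect the main difficulty to lie in the sheafification step for $\alpha$: one must verify that the presheaf $U\mapsto\calM((\Ainf(S^+),(\xi)))$ actually satisfies pro-\'etale descent, which requires matching the crystal descent of Lemma~\ref{Lem-Fcrystal as perfect Stratification} with pro-\'etale descent for modules over $W(\widehat{\calO}_{X^\flat})$ on the generic fibre.
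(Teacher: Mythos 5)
Your construction of $\alpha$ (evaluating $\calM$ on the perfect prism $(W((S^+)^\flat),\Ker\theta)$ attached to an affinoid perfectoid $V$) and your commutativity check are the same as the paper's. The gap is in how you conclude: you take as an independent input that the bottom arrow $\Gamma(U_\infty,-)$ is an equivalence, asserting this is ``a formal consequence'' of the sheaf-theoretic formalism of \cite[\S 9]{KL} plus cohomology vanishing. Full faithfulness is indeed essentially formal: by \cite[Corollary 9.3.8]{KL} the restriction of an \'etale $W(\widehat{\calO}_{X^\flat})$-module to $X_{\proet}/V$ is the base change of its value at $V$, and the value at any affinoid perfectoid $V$ is the equalizer of the values at $V_\infty:=U_\infty\times_X V$ and $V_\infty\times_V V_\infty$, both of which lie over $U_\infty$. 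But essential surjectivity of $\Gamma(U_\infty,-)$ is not formal: it amounts to effectivity of descent for finite projective \'etale $\varphi$-modules with continuous semilinear $G$-action along the pro-\'etale $G$-torsor $U_\infty\to X$, i.e.\ to a relative Fontaine-type equivalence between \'etale $\varphi$-modules on $X_{\proet}$ and $(\varphi,G)$-modules over $\widehat{\Ainf(\widehat{\overline R})[\frac{1}{\xi}]}$; neither sheafiness of $W(\widehat{\calO}_{X^\flat})$ nor the vanishing of its higher cohomology on affinoid perfectoids tells you that a module given only over the sections at $U_\infty$ spreads out to a sheaf that is locally finite projective. This is exactly the nontrivial content that the paper refuses to assume and instead extracts from the prismatic side.

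The repair is short and is in effect the paper's proof, with the roles of the two unknown arrows exchanged. Only the (formal) section-wise statement is needed as input: given $\calE\in\Et\Phi\rM(W(\widehat{\calO}_{X^\flat}))$, put $\calM:=\ev_\infty^{-1}(\Gamma(U_\infty,\calE))$ using Corollary \ref{descend equivalence}; for every affinoid perfectoid $V$, both $\calE(V)$ and $\alpha(\calM)(V)$ are the equalizers of the two maps from the value at $V_\infty$ to the value at $V_\infty\times_V V_\infty$, and since these objects live over $U_\infty$ the values are base-changed from $\Gamma(U_\infty,\calE)=\ev_\infty(\calM)$, so $\alpha(\calM)\cong\calE$. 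Combined with the commutativity (which gives $\ev_\infty^{-1}\circ\Gamma(U_\infty,-)\circ\alpha\cong\id$), this shows $\alpha$ is an equivalence, and the equivalence of $\Gamma(U_\infty,-)$ is then a consequence of the triangle rather than an input (its essential surjectivity also follows directly from the triangle and Corollary \ref{descend equivalence}). Two smaller points: your sheafiness of $\alpha(\calM)$ is left at ``should follow'' via stratifications, whereas the paper's argument is simpler --- by \cite[Corollary 9.3.8]{KL} and the crystal property, $\alpha(\calM)|_{X_{\proet}/V}$ is the base change of the finite projective module $\calM(W((S^+)^\flat),\Ker\theta)$ along the sheaf of rings $W(\widehat{\calO}_{X^\flat})|_V$, hence already a sheaf; and the $G$-equivariance comparison you sketch is fine and matches Remark \ref{Rmk-General groups}.
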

	
	\begin{proof}
		
		We first construct the functor $\alpha$ in a natural way and prove it is an equivalence by studying this commutative diagram.
		
		Let $\calM$ be a prismatic $F$-crystal in $	{\rm Crys}(\calO_{\Prism}[\frac{1}{I}]^{\wedge}_p, \varphi)$.  Note that the site $X_{\proet}$ admits a basis consisting of affinoid perfectoid objects. To define a sheaf on $X_{\proet}$ then is equivalent to defining a sheaf on the site consisting of affinoid perfectoid objects with the induced topology. We can define a pro-\'etale presheaf $\alpha(\bM)$ on $X_{\proet}$ corresponding to $\calM$ as follows: let $V=``\varprojlim"_{i\in I}V_i$ be affinoid perfectoid with $V_i=\Spa(S_i, S_i^+)$ and $S^+=(\varinjlim_iS_i^+)^{\wedge}$  being a perfectoid ring and $\hat V$ be the associated perfectoid space. We can define
		\[
		\alpha(\calM)(V):=\calM((W((S^+)^{\flat}), (\Ker(\theta))).
		\]
		
		By the fact that there is an equivalence between the two sites $\hat V_{\proet}$ and $X_{\proet}/V$ and \cite[Corollary 9.3.8]{KL}, we see that for any \'etale $W(\hat \calO_{X^{\flat}})$-module $\calE$ on $X_{\proet}$, its restriction on $X_{\proet}/V$ is isomorphic to $\calE(V)\otimes_{W(\hat \calO_{X^{\flat}})(V)}W(\hat \calO_{X^{\flat}})|_V$.  This means $\alpha(\calM)|_V$ is a already a sheaf on $X_{\proet}/V$ for any affinoid perfectoid object $V$ by the definition of prismatic crystals.
		
		Then it is easy to see that $\alpha(\calM)$ is in $	\Et\Phi(W(\widehat{\calO}_{X^{\flat}}))$ and the diagram in the theorem is commutative by the construction.
		Now by Corollay \ref{descend equivalence}, the functor $\rm ev_{\infty}$ is an equivalence. Then ${\rm ev_{\infty}}^{-1}\circ \Gamma(U_{\infty},-)$ is a left inverse to the functor $\alpha$.
		
		To show $\alpha$ is an equivalence, it remains to show that $\alpha( {\rm ev_{\infty}^{-1}}( \Gamma(U_{\infty},\calE)))\cong \calE$ for any $\calE\in \Et\Phi(W(\widehat{\calO}_{X^{\flat}}))$. Let $\calM={\rm ev_{\infty}^{-1}}( \Gamma(U_{\infty},\calE))$. For any affinoid perfectoid objects $V\in X_{\rm pro\acute et}$, write $V_{\infty}:=U_{\infty}\times_XV$. Then $\calE(V)=\ker(\calE(V_{\infty})\to \calE(V_{\infty}\times_VV_{\infty}))$. By abuse of notation, let $\calM(V)$ denote the value of $\calM$ on the prism corresponding to the affinoid perfectoid space $\hat V$. Then we also have $\calM(V)=\ker(\calM(V_{\infty})\to \calM(V_{\infty}\times_VV_{\infty}))$. Since both $V_{\infty}$ and $V_{\infty}\times_VV_{\infty}$ live over $U_{\infty}$, we get $\calE(V)=\alpha(\calM)(V)=\calM(V)$. We are done.
	\end{proof}
	
	\begin{rmk}
	    By similar arguments, one can also prove that there is an equivalence between the category $\Crys(\overline{\calO}_{\Prism}[\frac{1}{p}])$ of rational Hodge--Tate crystals over $(\frakX)_{\Prism}$ and the category ${\rm Vect}(X_{\proet},\widehat{\calO}_X)$ of generalized representations over $X_{\proet}$.
	\end{rmk}

	\section{\' Etale comparison}
	
	In this section, we fix a perfect prism $(A,I=(d))$ such that $A/I$ contains all $p$-power roots of unity, which ensures that the arguments in Section \ref{local} can be safely applied. Note that for any $p$-adic formal scheme $\frakX$ over $A/I$ we have $(\frakX/A)_{\Prism}\simeq \frakX_{\Prism}$. Under the equivalence of categories in Section \ref{correspondence}, we want to study the relationship between the prismatic cohomology of crystals and the pro-\' etale cohomology of the corresponding $\Zp$-local systems.  The main theorem of this section is the following.

	\begin{thm}\label{global etale}
		Let $\frakX$ be a separated $p$-adic smooth formal scheme over $A/I$ and $\calM$ be a prismatic $F$-crystal over $\calO_{\Prism}[\frac{1}{I}]^{\wedge}_p$ with corresponding $\Zp$-local system $\calL$ on $X_{\rm \acute et}$. Then there is a quasi-isomorphism
		\[
		R\Gamma((\frakX/A)_{\Prism}, \calM)^{\varphi=1}\simeq R\Gamma(X_{\rm \acute et}, \calL).
		\]
	\end{thm}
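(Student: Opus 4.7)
The plan is to compute both sides as continuous $G$-cohomology of an \'etale $(\varphi,G)$-module, and then to connect them via an Artin--Schreier-type exact sequence. Because the formation of both sides is Zariski-local on $\frakX$, I work locally with $\frakX = \Spf(R)$ small affine and framed, and place myself in the setting of Remark \ref{Rmk-General groups}: let $U_{\infty}$ be the universal pro-\'etale cover of $X = \Spa(R[\frac{1}{p}],R)$, $G = \pi_1^{\et}(X)$, $\widetilde A = \widehat{\Ainf(\widehat{\overline R})[\frac{1}{\xi}]}$, and $\widetilde M := \calM(\widetilde A) = \Gamma(U_{\infty}, \alpha(\calM))$ the associated \'etale $(\varphi,G)$-module, with corresponding $\Zp$-representation $\widetilde L := \calL(U_{\infty})$.

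The pro-\'etale side is straightforward: since $U_{\infty}\to X$ is a pro-\'etale $G$-torsor and $\calL$ is locally constant, Cartan--Leray gives
\[ \RGamma(X_{\et}, \calL) \simeq \RGamma(\Xproet, \calL) \simeq \RGamma_{\cts}(G, \widetilde L). \]

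For the prismatic side, I first pass from the absolute prismatic site to the perfect prismatic site. Working in $(\frakX/A)^{\perf}_{\Prism}$, the prism $(\widetilde A, (\xi))$ covers the final object (cf.\ Lemma \ref{Lem-Special cover}), and its \v Cech nerve $\widetilde A^{\bullet}$, after $p$-completely inverting $\xi$, is identified with $\rC(G^{\bullet}, \widetilde A)$ by the same argument as Lemma \ref{Lem-Describe of perfect cover}. The crystal condition on $\calM$ together with the \v Cech--Alexander description of prismatic cohomology then identifies $\RGamma((\frakX/A)^{\perf}_{\Prism}, \calM)$ with the continuous cochain complex of $\widetilde M$, giving
\[ \RGamma((\frakX/A)^{\perf}_{\Prism}, \calM) \simeq \RGamma_{\cts}(G, \widetilde M). \]
The comparison with the full site $\RGamma((\frakX/A)_{\Prism}, \calM)$ uses the invertibility of Frobenius on $\calM$: restricting from the full to the perfect prismatic site is compatible with $\calM$, and the equivalence of \'etale $\varphi$-modules under perfection (Proposition \ref{Prop-Wu}), applied termwise in the \v Cech complex, ensures the two cohomologies agree.

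Finally, the Kedlaya--Liu equivalence furnishes an Artin--Schreier exact sequence of topological $G$-modules
\[ 0 \to \widetilde L \to \widetilde M \xrightarrow{\varphi-1} \widetilde M \to 0, \]
realising $\widetilde L = \widetilde M^{\varphi=1}$. Applying $\RGamma_{\cts}(G, -)$ and taking the $\varphi=1$ part --- using that the Frobenius on $\RGamma((\frakX/A)_{\Prism}, \calM)$ corresponds to $\varphi$ on $\widetilde M$ under the identification above --- yields the desired quasi-isomorphism. The main obstacle is the full-vs-perfect comparison in step two: one must upgrade Proposition \ref{Prop-Wu} and the stratification argument of Lemma \ref{Lem-Fcrystal as perfect Stratification} from an equivalence of categories of crystals to an equivalence of their derived global sections. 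Once this is in place, the identification with continuous $G$-cohomology follows from the explicit cosimplicial description in Lemma \ref{Lem-Describe of perfect cover}, and the final Artin--Schreier step is formal (surjectivity of $\varphi - 1$ on $\widetilde M$ being a consequence of the \'etaleness of the $\varphi$-module together with the perfectoid nature of $\widehat{\overline R}$).
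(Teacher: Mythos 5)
Your overall architecture (\v{C}ech--Alexander computation, identification of the perfect prismatic site with pro-\'etale/continuous $G$-cohomology, an Artin--Schreier sequence at the end) is the same as the paper's, but the step you yourself flag as the main obstacle --- the full-versus-perfect comparison --- is a genuine gap, and the repair you propose is not the right one. You claim that Proposition \ref{Prop-Wu}, ``applied termwise in the \v{C}ech complex'', ensures $\RGamma((\frakX/A)_{\Prism},\calM)\simeq \RGamma((\frakX/A)^{\perf}_{\Prism},\calM)$, and you suggest upgrading the categorical equivalence to an ``equivalence of derived global sections''. But Proposition \ref{Prop-Wu} is an equivalence of categories realized by base change $M\mapsto M\otimes_{\widehat{A[\frac{1}{I}]}}\widehat{A_{\infty}[\frac{1}{I}]}$; it does not identify the underlying modules, and the \v{C}ech--Alexander terms $\calM(A^{\square}(R)^i,(d))$ and $\calM(\Ainf(R_{\infty})^i,(d))$ are modules over genuinely different rings, so a termwise appeal to it yields no quasi-isomorphism of complexes. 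Moreover, the comparison of the two sites before taking $\varphi$-fixed points is stronger than what is true (or needed): as in Bhatt--Scholze, only the fibers of $\varphi-1$ can be compared, which is exactly how the paper phrases its intermediate theorem $\RGamma((R/A)_{\Prism},\calM)^{\varphi=1}\simeq \RGamma((R/A)^{\perf}_{\Prism},\calM)^{\varphi=1}$.

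Concretely, the missing idea is the following. After checking via transversality that the higher cohomology of $\calO_{\Prism}[\frac{1}{I}]^{\wedge}_p$ on each \v{C}ech--Alexander term vanishes (Lemma \ref{vanish}, Corollary \ref{vanishing for crystals} --- a point you also use implicitly), one forms the bicomplex computing the fiber of $\varphi-1$ on the \v{C}ech--Alexander complex and compares spectral sequences with the perfect-site analogue. The input that makes this work is that $\Ker(\varphi-1)$ and $\Coker(\varphi-1)$ on each term agree with those on its perfection: these groups are identified with automorphisms and extension classes of the unit object by the \'etale $\varphi$-module (Lemma \ref{Lem-Cohomology of phi modules}), and the perfection equivalence preserves Homs and extensions (Remark \ref{Rmk-Preserve extension}, which itself requires revisiting the proof of Wu's theorem). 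Without this extension-preservation statement, invertibility of Frobenius on $\calM$ alone tells you nothing about $\varphi-1$ having matching kernel and cokernel on corresponding terms. So the correct fix is to compare only the fibers of $\varphi-1$ termwise via Lemma \ref{Lem-Cohomology of phi modules} and Remark \ref{Rmk-Preserve extension} and then run the spectral-sequence argument, not to seek an equivalence of the derived global sections themselves. (A lesser point: for globalization the paper must first make the local quasi-isomorphisms independent of the chosen framing --- the colimit over finite sets of framings --- so that they glue; ``both sides are Zariski-local'' by itself does not produce a natural comparison map.)
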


	We check this theorem locally in a good functorial way. Assume $\frakX=\Spf(R)$ where $R$ admits an \'etale map from $A/I\langle T_1^{\pm 1},\cdots, T_n^{\pm 1}\rangle$ for some $n$. Our strategy is to relate prismatic cohomology on $(R/A)_{\Prism}$ to that on $(R/A)_{\Prism}^{\rm perf}$. The latter is more closely related to the \'etale cohomology of the generic fiber $\Spa(R[\frac{1}{p}], R)$.
	
	We first prove some lemmas showing that the  prismatic cohomology of crystals can also be calculated by some $\rm \check{C}$ech-Alexander complexes. The following lemma is an analogue of \cite[Corollary 3.12]{BS}.



	\begin{lem}\label{vanish}
		Let $(B,(d))$ be a transversal prism in $(R/A)_{\Prism}$, i.e. $(p, d)$ is a regular sequence in $B$. Then the higher cohomology of $\calO_{\Prism}[\frac{1}{I}]^{\wedge}_p$ on $(B, (d))$ vanishes.
	\end{lem}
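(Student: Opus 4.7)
The plan is to reduce to the analogous vanishing of higher cohomology of $\calO_{\Prism}$ on transversal prisms---that is, \cite[Corollary 3.12]{BS}---by applying the operation $(-)[\tfrac{1}{d}]^{\wedge}_p$ term-by-term to a suitable \v{C}ech--Alexander complex.

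First, I would choose a cover $\frakB' \to \frakB$ in $(R/A)_{\Prism}/\frakB$ with $\frakB'$ transversal. A convenient choice is the perfection $\frakB^{\mathrm{perf}}$ of $\frakB$ (\cite[Lemma 3.9]{BS}), which remains transversal since $B \to B_{\mathrm{perf}}$ is $(p,d)$-completely faithfully flat and hence preserves the regularity of the pair $(p,d)$. Form the \v{C}ech nerve $(\frakB')^{(\bullet)}$ in $(R/A)_{\Prism}/\frakB$; an inductive application of the prismatic envelope construction (compare the proof of Lemma~\ref{Lem-Perfection}) shows that each $(\frakB')^{(n)}$ is again transversal over $\frakB$.

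Second, \cite[Corollary 3.12]{BS} implies that the \v{C}ech complex of $\calO_{\Prism}$ along $(\frakB')^{(\bullet)}$ is a resolution of $B$ concentrated in degree $0$. I would then apply $(-)[\tfrac{1}{d}]^{\wedge}_p$ term-by-term. Since each $(B')^{(n)}$ is transversal, $d$ is a non-zero-divisor on it and it is $p$-torsion free; consequently, inverting $d$ is exact and classical $p$-completion agrees with derived $p$-completion. It follows that the term-by-term completed cosimplicial complex is still a resolution, now of $B[\tfrac{1}{d}]^{\wedge}_p$, and therefore the higher cohomology of $\calO_{\Prism}[\tfrac{1}{I}]^{\wedge}_p$ on $\frakB$ vanishes.

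The main technical obstacle is verifying that $p$-adic completion genuinely commutes with the cosimplicial totalization on this specific complex, i.e.\ that no derived-completion error terms appear. Transversality is essential here: the positive-degree cohomology of the uncompleted \v{C}ech complex already vanishes, so the Milnor exact sequence for derived inverse limits collapses and termwise $p$-completion preserves acyclicity. A complementary check is that, after inverting $d$, the prismatic-envelope corrections used in constructing $(\frakB')^{(n)}$ become trivial (they are $d$-power torsion $\delta$-structure adjustments), so the resulting cosimplicial ring matches the one arising from classical faithfully flat descent along $B[\tfrac{1}{d}]^{\wedge}_p \to B_{\mathrm{perf}}[\tfrac{1}{d}]^{\wedge}_p$, which gives an independent verification of the vanishing.
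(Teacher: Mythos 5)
There are two genuine gaps, and they sit at the heart of the lemma. First, your chosen ``cover'' is not a covering: coverings in $(R/A)_{\Prism}$ are $(p,I)$-completely faithfully flat maps of prisms, and the perfection map $B\to B_{\perf}$ is in general not $(p,d)$-completely flat. Modulo $p$ it is built from the Frobenius of $B/p$, which (by Kunz-type results) is flat only under strong regularity hypotheses, whereas transversal prisms in this site need not have regular reduction at all --- e.g.\ the \v{C}ech--Alexander terms $A^{\square}(R)^i$, whose reductions modulo $(d)$ are completed PD-polynomial algebras, hence highly non-regular modulo $p$. Your justification (``since $B\to B_{\perf}$ is $(p,d)$-completely faithfully flat'') is exactly the unproved point, and it is also what you use to get transversality of the nerve terms, so that step collapses too; the appeal to Lemma \ref{Lem-Perfection} does not help, since that lemma identifies the perfection of $A^{\Box}_{\inf}(R)$ with $\Ainf(R_{\infty})$ (a case where the target happens to be topologically free over the source) and says nothing about an arbitrary transversal $(B,(d))$, nor about prismatic envelopes.

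Second, and more fundamentally, even if $\frakB'\to\frakB$ were a legitimate cover, acyclicity of the \v{C}ech complex of $\calO_{\Prism}[\frac{1}{I}]^{\wedge}_p$ along that \emph{single} cover does not give vanishing of the derived-functor cohomology $H^{>0}((B,(d)),\calO_{\Prism}[\frac{1}{I}]^{\wedge}_p)$: to identify the totalization with $R\Gamma((B,(d)),-)$ you would already need the vanishing of higher cohomology of this very sheaf on the terms of the nerve (via the \v{C}ech-to-derived spectral sequence), which is what is being proved, or else \v{C}ech-acyclicity for \emph{arbitrary} coverings. This is precisely why the paper's proof starts from an arbitrary flat cover $(B,(d))\to(C,(d))$: flatness makes every term $C^i$ of the nerve transversal, hence $p$-torsion free after inverting $d$, and the resulting \v{C}ech acyclicity for all flat covers of all transversal prisms (a class stable under forming such nerves) is what the standard comparison of \v{C}ech and derived cohomology requires. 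Your completion mechanics --- localization at $d$ is exact, termwise classical equals derived $p$-completion by $p$-torsion freeness, derived completion commutes with the totalization --- are correct and agree with the paper's, but they are applied to an inadmissible cover, and the final deduction from one cover to sheaf cohomology does not follow.
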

	
	\begin{proof}
		Let $(B,(d))\to (C,(d))$ be a flat cover. Then $C\otimes_{B}^{\bL}B/{(p,d)}$ is concentrated in degree 0 by definition. Since $(p,d)$ is a regular sequence in $B$, $(p,d)$ is also a regular sequence in $C$. In particular, $C$ is $p$-torsion free. Let $(C^i,(d))$ be $(i+1)$-fold fiber product $(C,(d))\times_{(B,(d))}\cdots\times_{(B,(d))}(C,(d))$ of $(C,(d))$ over $(B,(d))$. By the same argument,  $C^i$ is also $p$-torsion free for any $i\geq 0$.
		
		On the other hand, we have $B \simeq R\varprojlim C^i$. Inverting $I$, we have $B[\frac{1}{I}]\simeq R\varprojlim C^i[\frac{1}{I}]$. By taking derived $p$-adic completion, we have
		\[
		B[\frac{1}{I}]^{\wedge}_{p,\rm derived}\simeq R\varprojlim C^i[\frac{1}{I}]^{\wedge}_{p,\rm derived}.
		\]
		Since $B[\frac{1}{I}]$ and the $C^i[\frac{1}{I}]$'s are all $p$-torsion free, their derived $p$-adic completion is the same as their classical $p$-adic completion.  So we have
		\[
		B[\frac{1}{I}]^{\wedge}_{p}\simeq R\varprojlim C^i[\frac{1}{I}]^{\wedge}_{p}.
		\]
		This means all the higher $\rm \check C$ech cohomology groups vanish, which implies
		\[
		R\Gamma((B,(d)), \calO_{\Prism}[\frac{1}{I}]^{\wedge}_p)=\Gamma((B, (d)),\calO_{\Prism}[\frac{1}{I}]^{\wedge}_p).
		\]
	\end{proof}
	
	\begin{cor}\label{vanishing for crystals}
		Let $(B,(d))$ be a transversal prism in $(R/A)_{\Prism}$. Then for any prismatic $F$-crystal $\calM$ over $\calO_{\Prism}[\frac{1}{I}]^{\wedge}_p$, we have
		\[
		R\Gamma((B,(d)), \calM|_{(B,(d))})=\Gamma((B, (d)),\calM).
		\]
	\end{cor}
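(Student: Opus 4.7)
The plan is to reduce the statement to the case of the structure sheaf $\calO_{\Prism}[\frac{1}{I}]^{\wedge}_p$ (handled in Lemma \ref{vanish}) by exhibiting $\calM|_{(B,(d))}$ as a direct summand of a finite free sheaf on the localised site $((R/A)_{\Prism})_{/(B,(d))}$. The key enabling input is Definition \ref{Dfn-Prismatic F-Crystal}(2), which tells us that $\calM(B)$ is a finite projective $\widehat{B[\frac{1}{I}]}$-module, hence a direct summand of some $\widehat{B[\frac{1}{I}]}^{\oplus n}$.

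Concretely, I would pick an idempotent $e \in \End_{\widehat{B[\frac{1}{I}]}}(\widehat{B[\frac{1}{I}]}^{\oplus n})$ whose image is $\calM(B)$. For every morphism $(C,(d)) \to (B,(d))$ in the localised site, the crystal property identifies $\calM(C)$ with $\calM(B) \otimes_{\widehat{B[\frac{1}{I}]}} \widehat{C[\frac{1}{I}]}$; since $\calM(B)$ is finitely generated and $\widehat{C[\frac{1}{I}]}$ is $p$-adically complete, no further completion is necessary. The idempotent $e$ therefore extends functorially to an idempotent $e_C$ on $\widehat{C[\frac{1}{I}]}^{\oplus n}$ with image $\calM(C)$. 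Setting $\calN(C):=(1-e_C)\widehat{C[\frac{1}{I}]}^{\oplus n}$ yields a sheaf-theoretic splitting
\[
\bigl(\calO_{\Prism}[\tfrac{1}{I}]^{\wedge}_p|_{(B,(d))}\bigr)^{\oplus n} \;\cong\; \calM|_{(B,(d))} \oplus \calN
\]
on the localised site.

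Once this splitting is in hand the rest is formal: $R\Gamma((B,(d)),-)$ commutes with finite direct sums, and by Lemma \ref{vanish} applied to the transversal prism $(B,(d))$ we have $R\Gamma((B,(d)),\calO_{\Prism}[\frac{1}{I}]^{\wedge}_p|_{(B,(d))})=\widehat{B[\frac{1}{I}]}$ concentrated in degree zero. Both direct summands above are therefore concentrated in degree zero, and reading off the $\calM$-summand gives $R\Gamma((B,(d)), \calM|_{(B,(d))}) = \Gamma((B,(d)), \calM)$. The only subtlety I expect is the verification that $\calN$ really is a sheaf (rather than merely a presheaf) on the localised site, but this is immediate from the sheaf property of the structure sheaf together with the fact that applying a functorial idempotent commutes with equalisers. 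I do not anticipate any serious obstacle beyond that bookkeeping.
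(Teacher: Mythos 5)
Your proof is correct and in substance the same as the paper's: the paper likewise uses the crystal property to write $\calM|_{(B,(d))}\cong\calM(B,(d))\otimes_{\widehat{B[\frac{1}{I}]}}\calO_{\Prism}[\frac{1}{I}]^{\wedge}_p$ with $\calM(B,(d))$ finite projective and then reduces to Lemma \ref{vanish}, the only difference being that it quotes the projection formula where you reprove it by realizing $\calM|_{(B,(d))}$ as a functorial direct summand of a finite free $\calO_{\Prism}[\frac{1}{I}]^{\wedge}_p|_{(B,(d))}$-module. The bookkeeping point you flag is harmless: $\calN$ is the kernel of a morphism of sheaves, hence a sheaf, and in any case additivity of $R\Gamma$ applied to the retract $\calM|_{(B,(d))}$ of $\bigl(\calO_{\Prism}[\frac{1}{I}]^{\wedge}_p|_{(B,(d))}\bigr)^{\oplus n}$ already yields the vanishing of higher cohomology.
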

	
	\begin{proof}
		By definition, we see that $\calM(B,(d))$ is a finite projective module over $B[\frac{1}{I}]^{\wedge}_p$ and $\calM|_{(B,(d))}=\calM(B,(d))\otimes_{B[\frac{1}{I}]^{\wedge}_p} \calO_{\Prism}[\frac{1}{I}]^{\wedge}_p$. By Lemma \ref{vanish} and the projection formula
		\[
		R\Gamma((B,(d)), \calM(B,(d))\otimes^{\bL}_{B[\frac{1}{I}]^{\wedge}_p} \calO_{\Prism}[\frac{1}{I}]^{\wedge}_p)\simeq R\Gamma((B,(d)),\calO_{\Prism}[\frac{1}{I}]^{\wedge}_p)\otimes^{\bL}_{B[\frac{1}{I}]^{\wedge}_p}\calM(B,(d)),
		\]
		we are done.
	\end{proof}
	
	Let's choose a framing, i.e. an \'etale map $\square: A/I\langle T_1^{\pm 1},\cdots, T_n^{\pm 1}\rangle\to R$. There is a unique lift $A^{\square}(R)$ of $R$ to $A$. Lemma \ref{Lem-Special cover} tells us that $(A^{\square}(R),(d))$ is a cover of the final object of the topos ${\rm Shv}((R/A)_{\Prism})$.
	
	\begin{prop}\label{Cech}
		
		For any prismatic $F$-crystal $\calM$ over $\calO_{\Prism}[\frac{1}{I}]^{\wedge}_p$, there is a quasi-isomorphism
		\[
		R\Gamma((R/A)_{\Prism}, \calM)\simeq R\varprojlim_i\Gamma(((A^{\square}(R)^i,(d)),\calM),
		\]
		where $(A^{\square}(R)^i,(d))$ is the $(i+1)$-fold self-product $(A^{\square}(R),(d))\times\cdots\times(A^{\square}(R),(d))$ of $(A^{\square}(R),(d))$.
	\end{prop}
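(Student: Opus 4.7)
The plan is to descend along the cover $(A^{\square}(R),(d))$ of the final object of the topos ${\rm Shv}((R/A)_{\Prism})$ provided by Lemma \ref{Lem-Special cover}, and then use Corollary \ref{vanishing for crystals} to kill all higher Čech cohomology. Combining these two ingredients will identify $R\Gamma((R/A)_{\Prism},\calM)$ with the totalization (i.e.\ the derived limit $R\varprojlim_i$) of the cosimplicial object $i\mapsto \Gamma((A^{\square}(R)^i,(d)),\calM)$.

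\textbf{Step 1.} Since $(A^{\square}(R),(d))$ covers the final object of ${\rm Shv}((R/A)_{\Prism})$, the standard Čech-to-derived-functor spectral sequence gives
\[
E_1^{i,j} = H^j\bigl((A^{\square}(R)^i,(d)),\calM\bigr) \Longrightarrow H^{i+j}((R/A)_{\Prism},\calM),
\]
where $(A^{\square}(R)^i,(d))$ denotes the $(i+1)$-fold self-product in the prismatic site $(R/A)_{\Prism}$, i.e.\ the prismatic envelope of the $(p,d)$-completed tensor product of $A^{\square}(R)$ with itself $(i+1)$ times over $A$. So it suffices to establish that this spectral sequence degenerates on the $E_2$-page by showing $E_1^{i,j}=0$ for $j>0$.

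\textbf{Step 2.} The key claim is that each $(A^{\square}(R)^i,(d))$ is a transversal prism, i.e.\ that $(p,d)$ is a regular sequence in $A^{\square}(R)^i$. Given transversality, Corollary \ref{vanishing for crystals} immediately yields $H^j\bigl((A^{\square}(R)^i,(d)),\calM\bigr)=0$ for $j>0$, and Step 1 then collapses to the desired identification
\[
R\Gamma((R/A)_{\Prism},\calM)\simeq R\varprojlim_i\Gamma\bigl((A^{\square}(R)^i,(d)),\calM\bigr).
\]
The transversality follows because $A$ is a perfect (hence transversal) prism and $A^{\square}(R)$ is $(p,d)$-completely (formally) étale over $A\langle\underline T^{\pm 1}\rangle$; the iterated $(p,d)$-completed tensor products $A^{\square}(R)\widehat\otimes_A\cdots\widehat\otimes_A A^{\square}(R)$ are therefore $(p,d)$-completely flat over the transversal base $A$, hence transversal themselves. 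One then checks that passing to the prismatic envelope preserves this property, e.g.\ using \cite[Proposition 3.13]{BS} together with the $(p,d)$-complete flatness.

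\textbf{Main obstacle.} The only nontrivial point is the verification that the self-products in the prismatic site are transversal; one needs to be careful that the fiber product is taken in $(R/A)_{\Prism}$, not merely as a $(p,d)$-completed tensor product of $\delta$-rings, so a brief computation of the prismatic envelope is required. Once this is in place the remaining arguments are formal: they just assemble the Čech-Alexander resolution of the structure sheaf with Corollary \ref{vanishing for crystals}.
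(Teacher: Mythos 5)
Your proposal follows essentially the same route as the paper: descend along the cover $(A^{\square}(R),(d))$ of the final object (Lemma \ref{Lem-Special cover}) to write $R\Gamma((R/A)_{\Prism},\calM)$ as the derived limit over the \v{C}ech nerve, then apply Corollary \ref{vanishing for crystals} to each self-product, whose transversality the paper justifies the same way you do (via \cite[Proposition 3.13]{BS}, with the explicit description of $A^{\square}(R)^i/(d)$ from \cite{Tian}). One minor caveat: ``perfect hence transversal'' is false in general (e.g.\ $(\bZ_p,(p))$), though transversality of $A$ does hold under the paper's standing hypothesis that $A/I$ contains all $p$-power roots of unity.
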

	
	\begin{proof}
		Note that all $A^{\square}(R)^i$'s are $p$-torsion free. Since $(A^{\square}(R),(d))$ is a cover of the final object in the topos ${\rm Shv}((R/A)_{\Prism})$, we have \[
		R\Gamma((R/A)_{\Prism},\calM)\simeq R\varprojlim_iR\Gamma((A^{\square}(R)^i,(d)), \calM).
		\]
		
		Then this proposition follows from Corollary \ref{vanishing for crystals}.
	\end{proof}
	
	We have similar results for the perfect prismatic site. Recall that the prism $(\Ainf(R_{\infty}),(d))$, which is a cover of the final object of the topos ${\rm Shv}((R/A)_{\Prism}^{\rm perf})$, is the perfection of $(A^{\square}(R), (d))$.
	
	\begin{prop}\label{Infinity cech nerve}
		For any prismatic $F$-crystal $\calM$ over $\calO_{\Prism}[\frac{1}{I}]^{\wedge}_p$, there is a quasi-isomorphism
		\[
		R\Gamma((R/A)_{\Prism}^{\rm perf}, \calM)\simeq R\varprojlim_i\Gamma((\Ainf(R_{\infty})^i,(d)),\calM),
		\]
		where $(\Ainf(R_{\infty})^i,(d))$ is the $(i+1)$-fold self-product of $(\Ainf(R_{\infty}),(d)))$ in $(R/A)^{\rm perf}_{\Prism}$.
	\end{prop}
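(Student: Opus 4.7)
The plan is to mimic Proposition \ref{Cech} in the perfect site $(R/A)_{\Prism}^{\rm perf}$, using $(\Ainf(R_{\infty}),(d))$ in place of $(A^{\square}(R),(d))$ as the cover of the final object. By (the perfect-site variant of) Lemma \ref{Lem-Special cover}, $(\Ainf(R_{\infty}),(d))$ covers the final object of ${\rm Shv}((R/A)_{\Prism}^{\rm perf})$, so standard \v{C}ech descent yields
\[
R\Gamma((R/A)_{\Prism}^{\rm perf}, \calM) \simeq R\varprojlim_i R\Gamma((\Ainf(R_{\infty})^i,(d)), \calM),
\]
where $(\Ainf(R_{\infty})^i,(d))$ denotes the $(i+1)$-fold self-product formed in $(R/A)_{\Prism}^{\rm perf}$.

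The next step is to check that each of these \v{C}ech terms is a transversal prism, so that Corollary \ref{vanishing for crystals} kills the higher cohomology. By \cite[Theorem 3.10]{BS}, coproducts of perfect prisms remain perfect, so each $(\Ainf(R_{\infty})^i,(d))$ is itself a perfect prism; any perfect prism $(B,(d))$ is automatically transversal, since $B$ is $p$-torsion free and $B/p$ is a perfect $\bF_p$-algebra, hence reduced, so $d$ is a non-zero-divisor modulo $p$. Transversality then lets us replace each $R\Gamma$ on the right-hand side by the plain global sections $\Gamma$, producing the claimed identification.

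The main potential obstacle is that Corollary \ref{vanishing for crystals} is formally stated for cohomology computed in $(R/A)_{\Prism}$, whereas we need it in $(R/A)_{\Prism}^{\rm perf}$. However, the proof of Lemma \ref{vanish} relies only on the $p$-torsion-freeness of the \v{C}ech nerve of a flat cover of a transversal prism, and this property persists verbatim in the perfect site: any flat cover of a perfect prism in $(R/A)_{\Prism}^{\rm perf}$ is again by a perfect, hence transversal, prism, so its \v{C}ech nerve is $p$-torsion free. Consequently Corollary \ref{vanishing for crystals} carries over to $(R/A)_{\Prism}^{\rm perf}$ without modification, and the proposition follows.
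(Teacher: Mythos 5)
Your proposal is correct and follows essentially the same route as the paper, whose proof of this proposition is simply "the same argument as in Proposition \ref{Cech}": \v{C}ech descent along the cover $(\Ainf(R_\infty),(d))$ of the final object of ${\rm Shv}((R/A)_{\Prism}^{\rm perf})$, plus the vanishing of higher cohomology on the (perfect, hence transversal, hence $p$-torsion free) self-products as in Lemma \ref{vanish} and Corollary \ref{vanishing for crystals}. Your extra care in checking that the vanishing argument transfers verbatim to the perfect site is exactly the implicit content of the paper's one-line proof.
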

	
	\begin{proof}
		This follows from the same argument as in Proposition \ref{Cech}.
	\end{proof}
	
	Next we compare $R\Gamma((R/A)_{\Prism}, \calM)$ to $R\Gamma((A/R)_{\Prism}^{\rm perf}, \calM)$.
	
	\begin{thm}
		There is a quasi-isomorphsim
		\[
		R\Gamma((R/A)_{\Prism}, \calM)^{\varphi=1}\simeq R\Gamma((R/A)_{\Prism}^{\rm perf},\calM)^{\varphi=1}.
		\]
	\end{thm}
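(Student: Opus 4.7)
The strategy is to use the \v{C}ech--Alexander descriptions of Propositions~\ref{Cech} and~\ref{Infinity cech nerve} to reduce the global comparison to a termwise statement about \'etale $\varphi$-modules, and then to invoke the tensor equivalence of Proposition~\ref{Prop-Wu} together with Remark~\ref{Rmk-Preserve extension} to deduce the termwise quasi-isomorphism.

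Explicitly, writing $M_i := \calM(A^{\square}(R)^i, (d))$ and $M_i^{\perf} := \calM(\Ainf(R_{\infty})^i, (d))$, Propositions~\ref{Cech} and~\ref{Infinity cech nerve} give
\[
R\Gamma((R/A)_{\Prism}, \calM) \simeq R\varprojlim_i M_i, \qquad R\Gamma((R/A)_{\Prism}^{\perf}, \calM) \simeq R\varprojlim_i M_i^{\perf}.
\]
The prism $(\Ainf(R_{\infty})^i, (d))$ is (by universal property) the perfection of $(A^{\square}(R)^i, (d))$, so the crystal property gives $M_i^{\perf} = M_i \otimes_{\widehat{A^{\square}(R)^i[1/d]}} \widehat{\Ainf(R_{\infty})^i[1/d]}$. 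Since the homotopy fiber of $\varphi - 1$ commutes with $R\varprojlim$, I would reduce the theorem to showing, for each $i$, that the natural map of two-term complexes
\[
[M_i \xrightarrow{\varphi - 1} M_i] \longrightarrow [M_i^{\perf} \xrightarrow{\varphi - 1} M_i^{\perf}]
\]
is a quasi-isomorphism.

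A map of two-term complexes is a quasi-isomorphism iff it is an isomorphism on $H^0 = \Ker(\varphi - 1)$ and on $H^1 = \Coker(\varphi - 1)$. In the category of \'etale $\varphi$-modules over the relevant coefficient ring, the former is $\Hom_\varphi(\mathbf{1}, -)$, which is preserved by the tensor equivalence of Proposition~\ref{Prop-Wu}. The latter is $\Ext^1_\varphi(\mathbf{1}, -)$ via the standard Artin--Schreier identification: an extension $0 \to M \to E \to \mathbf{1} \to 0$ is given by the underlying module $E = M \oplus \widehat{B[1/d]}$ with Frobenius determined up to isomorphism by a class in $M/(\varphi - 1)M$. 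Remark~\ref{Rmk-Preserve extension} then gives preservation of $H^1$ under the base change, and both together yield the termwise quasi-isomorphism.

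The main obstacle will be to confirm that each \v{C}ech term $(A^{\square}(R)^i, (d))$ is a transversal prism, so that the hypotheses of Proposition~\ref{Prop-Wu} and Remark~\ref{Rmk-Preserve extension} (which require $(p, d)$ to be a regular sequence) are actually met. The case $i = 0$ is immediate from the formal smoothness of $A^{\square}(R)$ over $A$; the case $i \geq 1$ should follow by the same flatness considerations used to establish $p$-torsion-freeness of the \v{C}ech terms in the proofs of Lemma~\ref{vanish} and Proposition~\ref{Cech}, since prismatic self-products of sufficiently flat transversal prisms remain transversal. Once this is in place, the plan above yields the desired quasi-isomorphism $R\Gamma((R/A)_{\Prism}, \calM)^{\varphi = 1} \simeq R\Gamma((R/A)_{\Prism}^{\perf}, \calM)^{\varphi = 1}$.
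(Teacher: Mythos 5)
Your proposal is correct and follows essentially the same route as the paper: reduce via the \v{C}ech--Alexander complexes to a termwise comparison, and identify $\ker(\varphi-1)$ and $\mathrm{coker}(\varphi-1)$ with $\Hom$ and extension classes of \'etale $\varphi$-modules, which are preserved under the perfection base change by Proposition \ref{Prop-Wu} and Remark \ref{Rmk-Preserve extension} (the paper packages this as a comparison of $E_1$-pages of the bicomplex spectral sequences plus Lemma \ref{Lem-Cohomology of phi modules}). The transversality of the \v{C}ech terms $(A^{\square}(R)^i,(d))$, which you flag as the remaining point, is settled in the paper by \cite[Proposition 3.13]{BS} (with $A^{\square}(R)^i/(d)$ identified as the $p$-adic completion of a free PD-polynomial ring, cf.\ \cite{Tian}).
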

	
	\begin{proof}
		Note that $R\Gamma((R/A)_{\Prism},\calM)^{\varphi=1}$ is calculated by the total complex of the following bicomplex 
		
		\begin{equation*}
			\xymatrix@C=0.45cm{
				\Gamma((A^{\square}(R), (d)),\calM)       \ar[r]                     &  \Gamma((A^{\square}(R)^1, (d)),\calM)      \ar[r]                               &  \cdots  & \\
				\Gamma((A^{\square}(R), (d)),\calM)        \ar[u]^{\varphi-1}\ar[r]                 &      \Gamma((A^{\square}(R)^1, (d)),\calM)       \ar[u]^{\varphi-1}\      \ar[r]                             & \cdots   \ar[u]     &
			}
		\end{equation*}
		in the first quadrant. We denote this bicomplex by $C^{\bullet,\bullet}$.
		There ia a spectral sequences $E_1^{i,j}$ associated with this bicomplex, i.e. $E_0^{i,j}=C^{i,j}$. In particular, we have
		\[
		E_1^{i,1}=\Gamma((A^{\square}(R)^i,(d)),\calM)/{\Ima(\varphi-1)},
		\]
		\[
		E_1^{i,0}=\Gamma((A^{\square}(R)^i,(d)),\calM)^{\varphi=1}.
		\]
		
		For the perfect prismatic site, we have a similar bicomplex $C_{\rm perf}^{\bullet,\bullet}$, whose totalization calculates $R\Gamma((R/A)_{\Prism}^{\rm perf},\calM)^{\varphi=1}$, and a spectral sequence $E^{i,j}_{0,\rm perf}=C^{i,j}_{\rm perf}$. Clearly, there is a natural map from $C^{\bullet,\bullet}$ to $C^{\bullet,\bullet}_{\rm perf}$. Since in the both cases, the two spectral sequences converge to the cohomologies of the totalizations of corresponding bicomplexes. This means that we just need to compare the $E_1$-page $E_1^{i,j}$ and $E_{1,\rm perf}^{i,j}$. Then this theorem follows from the next lemma.
		
	\end{proof}

	\begin{lem}
		For any $i$, we have
		\[
		\Gamma((A^{\square}(R)^i,(d)),\calM)/{\Ima(\varphi-1)}\cong \Gamma((\Ainf(R_{\infty})^i,(d)),\calM)/{\Ima(\varphi-1)}
		\]
		and
		\[
		\Gamma((A^{\square}(R)^i,(d)),\calM)^{\varphi=1}\cong \Gamma((\Ainf(R_{\infty})^i,(d)),\calM)^{\varphi=1}.
		\]
	\end{lem}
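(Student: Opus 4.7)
The plan is to reinterpret both $\varphi$-invariants and $\varphi$-coinvariants via $\Hom$ and $\Ext^1$ in the category of \'etale $\varphi$-modules, and then transport these across the base-change equivalence of Proposition~\ref{Prop-Wu}, using Remark~\ref{Rmk-Preserve extension} to handle $\Ext^1$.

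First I would verify that $(\Ainf(R_{\infty})^i,(d))$ is the perfection of $(A^{\square}(R)^i,(d))$ by repeating verbatim the universal-property argument of Lemma~\ref{Lem-Perfect special cover}: both are initial objects among perfect prisms in $(R/A)^{\rm perf}_{\Prism}$ equipped with $(i+1)$ morphisms from $(A^{\square}(R),(d))$, since any morphism from $(A^{\square}(R),(d))$ to a perfect prism factors uniquely through its perfection $(\Ainf(R_{\infty}),(d))$ by Lemma~\ref{Lem-Perfection}. Moreover $(A^{\square}(R)^i,(d))$ is transversal (it is a $(p,d)$-completely faithfully flat cover of the transversal prism $(A^{\square}(R),(d))$ and flatness preserves regularity of the sequence $(p,d)$), so $\varphi(d)\equiv d^p \pmod p$ is a non-zero divisor in $A^{\square}(R)^i/p$; thus both Proposition~\ref{Prop-Wu} and its refinement in Remark~\ref{Rmk-Preserve extension} are applicable.

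Next, set $M:=\Gamma((A^{\square}(R)^i,(d)),\calM)$. By definition of a prismatic $F$-crystal over $\calO_{\Prism}[\frac{1}{I}]^{\wedge}_p$, $M$ is a finite projective \'etale $\varphi$-module over $\widehat{A^{\square}(R)^i[\frac{1}{d}]^{\wedge}_p}$, and the crystal property identifies
\[
\Gamma((\Ainf(R_{\infty})^i,(d)),\calM) \;\cong\; M\otimes_{\widehat{A^{\square}(R)^i[\frac{1}{d}]^{\wedge}_p}}\widehat{\Ainf(R_{\infty})^i[\frac{1}{d}]^{\wedge}_p}
\]
as the image of $M$ under the tensor equivalence of Proposition~\ref{Prop-Wu}. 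For any base ring $B$ in play and any \'etale $\varphi$-module $N$ over $B$, a direct computation in the abelian category $\Et\Phi\rM(B)$ gives
\[
\Hom_{\Et\Phi\rM(B)}(\one,N) = N^{\varphi=1}, \qquad \Ext^1_{\Et\Phi\rM(B)}(\one,N) = N/(\varphi-1)N,
\]
where $\one=B$ is the trivial $\varphi$-module. The first identity is immediate. The second classifies an extension $0\to N\to E\to \one\to 0$ by the element $n_0:=(\varphi_E-\id)(e)\in N$, with $e$ lifting a basis of $\one$, modulo the boundaries $(\varphi-1)N$ that arise from changing the splitting of the underlying $B$-module.

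Since the base-change functor of Proposition~\ref{Prop-Wu} sends $\one$ to $\one$ and is an equivalence of categories (hence preserves $\Hom$), and, by Remark~\ref{Rmk-Preserve extension}, additionally preserves extensions (hence $\Ext^1$), both identities above transfer along the base change, yielding the two claimed isomorphisms. The main obstacle is the initial identification of $(\Ainf(R_{\infty})^i,(d))$ with the perfection of $(A^{\square}(R)^i,(d))$, together with the verification that $A^{\square}(R)^i$ is transversal, since products in the prismatic site and perfection need to be juggled carefully; once these are in place the rest is a formal consequence of Proposition~\ref{Prop-Wu} and Remark~\ref{Rmk-Preserve extension}.
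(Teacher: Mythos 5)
Your proof is correct and follows essentially the same route as the paper: both identify $\varphi$-invariants and $\varphi$-coinvariants with automorphisms and isomorphism classes of extensions of the unit \'etale $\varphi$-module (your $\Hom(\one,-)$ and $\Ext^1(\one,-)$ are exactly the paper's Lemma \ref{Lem-Cohomology of phi modules}), and then transfer them across the base-change equivalence using Remark \ref{Rmk-Preserve extension}, after checking transversality of $A^{\square}(R)^i$ and that $(\Ainf(R_{\infty})^i,(d))$ is the perfection of $(A^{\square}(R)^i,(d))$ (which the paper leaves implicit, citing \cite{BS} and \cite{Tian} for the regularity). The only cosmetic slip is calling $\Et\Phi\rM(B)$ abelian---it is an exact category of finite projective modules, but your Yoneda-style computation of $\Hom$ and $\Ext^1$ goes through unchanged.
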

	
	\begin{proof}
		Since $(p,(d))$ is a regular sequence in $A^{\square}(R)$, by \cite[Proposition 3.13]{BS} it is also regular in $A^{\square}(R)^i$ for any $i$ ( in fact, $A^{\square}(R)^i/(d)$ is the $p$-adic completion of a free PD-polynomial ring by \cite{Tian}). As mentioned in Remark \ref{Rmk-Preserve extension}, the equivalence \[\Et\Phi\rM(A^{\square}(R)^i[\frac{1}{I}]^{\wedge}_p)\rightarrow \Et\Phi\rM(\Ainf(R_{\infty})^i[\frac{1}{I}]^{\wedge}_p)\]
		preserves extensions.
		
		Let $E_1$ and $E_2$ be extensions of $N$ by $M $ for $\varphi$-modules $N$ and $M$ over some ring $A$. In other words, for $i = 1,2$, we have
		\[\xymatrix@C=0.45cm{
			0\ar[r]& M \ar[r]^{\alpha_i}& E_i\ar[r]^{\pi_i}& N\ar[r]& 0.
		}\]
		Then a morphism $f:E_1\rightarrow E_2$ is an isomorphism of extensions if and only if
		\[f\circ\alpha_1 = \alpha_2: M\rightarrow E_2\] and
		\[\pi_1 = \pi_2\circ f: E_1\rightarrow N.\]
		
		As a consequence, let $N,M\in \Et\Phi\rM(A^{\square}(R)^i[\frac{1}{I}]^{\wedge}_p)$ with associated $N_{\infty}, M_{\infty}\in \Et\Phi\rM(\Ainf(R_{\infty})^i[\frac{1}{I}]^{\wedge}_p)$. Then the above arguments imply that the set of isomorphism classes of extensions of $N$ by $M$ coincides with the one of $N_{\infty}$ by $M_{\infty}$. For the same reason, fix an extension $E$ of $N$ by $M$ and denote by $E_{\infty}$ the corresponding extension of $N_{\infty}$ by $M_{\infty}$, then the group of automorphisms of $E$ (as an extension) coincides with the one of $E_{\infty}$.
		
		Now the result follows from Lemma \ref{Lem-Cohomology of phi modules}.
	\end{proof}
	
	The following lemma on $\varphi$-modules was used above.
	\begin{lem}\label{Lem-Cohomology of phi modules}
		Let $A$ be a $\delta$-ring and $M$ be a $\varphi$-module over $A$. Then the set of isomorphic classes of extensions of $A$ by $M$ forms an $\rH^1_{\varphi}(M)$-torsor and the group of automorphisms of such a fixed extension is $\rH^0_{\varphi}(M)$.
	\end{lem}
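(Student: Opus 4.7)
The plan is to use the fact that the trivial $\varphi$-module $A$ is free of rank one to split extensions as $A$-modules and then measure the failure of such a splitting to be $\varphi$-equivariant by a single element of $M$.

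First, given an extension
\[
0\longrightarrow M\xrightarrow{\alpha} E\xrightarrow{\pi} A\longrightarrow 0
\]
of $\varphi$-modules, I would choose an $A$-linear section $s:A\to E$, which exists since $A$ is projective as an $A$-module. Then $E=\alpha(M)\oplus s(A)$ as $A$-modules, and $\varphi_E$ is completely determined by $\varphi_M$ together with the element $m_s:=\varphi_E(s(1))-s(1)\in M$ (well-defined because $\pi(\varphi_E(s(1)))=\varphi_A(1)=1$). Conversely, every choice of $m\in M$ defines an extension structure via the formula $\varphi_E(\alpha(x)+s(a))=\alpha(\varphi_M(x))+\varphi(a)\bigl(s(1)+\alpha(m)\bigr)$, so the assignment $E\mapsto m_s$ sets up a surjection from the pointed set of pairs (extension, section) onto $M$.

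Next, I would analyze the effect of changing the section. Any other $A$-linear splitting differs from $s$ by an element $x\in M$, i.e.\ is of the form $s'(1)=s(1)+\alpha(x)$. A direct computation gives $m_{s'}=m_s+(\varphi-1)(x)$, so the class $[m_s]$ in $M/(\varphi-1)M=\rH^1_\varphi(M)$ depends only on the isomorphism class of the extension. Moreover, any isomorphism of extensions $f:E_1\to E_2$ must be of the form $\id+\alpha\circ g\circ\pi$ for some $A$-linear $g:A\to M$ (equivalently, an element of $M$), so the induced map on sections is precisely the translation discussed above; this shows the assignment descends to a well-defined and injective map from isomorphism classes of extensions to $\rH^1_\varphi(M)$, and the construction in the previous paragraph provides a preimage for every class, yielding the bijection claimed (with the split extension giving the zero class, endowing the set of isomorphism classes with the structure of an $\rH^1_\varphi(M)$-torsor under the Baer sum).

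Finally, for the automorphism group of a fixed extension $E$, I would observe that any self-isomorphism is determined, via a chosen splitting, by an element $x\in M$ (namely, the map sends $s(1)$ to $s(1)+\alpha(x)$ and fixes $\alpha(M)$ pointwise). The condition that such a map commutes with $\varphi_E$ reduces, after the computation in the previous paragraph, to $(\varphi-1)(x)=0$, i.e.\ $x\in M^{\varphi=1}=\rH^0_\varphi(M)$. The composition law on automorphisms corresponds to addition in $M$, so the automorphism group is canonically isomorphic to $\rH^0_\varphi(M)$. There is no real obstacle here; the only point that requires a little care is verifying that the torsor structure on the set of extensions is compatible with the Baer sum, which is a routine diagram chase.
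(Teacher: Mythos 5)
Your proof is correct and follows essentially the same route as the paper: split the extension $A$-linearly (using that $A$ is free over itself), encode the $\varphi$-structure by the element $m=\varphi_E(s(1))-s(1)\in M$, observe that changing the splitting or applying an isomorphism of extensions shifts $m$ by $(\varphi-1)(x)$, and identify automorphisms with $M^{\varphi=1}=\rH^0_{\varphi}(M)$. You are in fact slightly more explicit than the paper about the choice of section and its independence, but the underlying argument is the same.
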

	\begin{proof}
		Let $E$ be an extension of $A$ by $M$ with the underlying $A$-module $E = M\oplus A$. Then $E$ is uniquely determined by an element $m\in M$ satisfying $\varphi(0,1)=(m,1)$. For this reason, we denote $E$ by $E_m$. Let $f:E_m\rightarrow E_{m'}$ be an isomorphism of extensions. Then $f$ is uniquely determined by some $n\in M$ such that $f(0,1)=(n,1)$. Since $\varphi\circ f = f\circ\varphi$, we deduce that $m = m'+(\varphi-1)(n)$. This implies the first assertion. If moreover $m = m'$, that is, $f$ is an automorphism of $E_m$ (as an extension), then $n\in \rH^0_{\varphi}(M)$. This implies the second assertion.
	\end{proof}
	\begin{rmk}
		The description of $\varphi$-coinvariants in terms of extensions also appears in \cite[Chapter 11]{FF}.
	\end{rmk}
	
	Next we relate perfect prismatic cohomology to pro-\'etale cohomology. Let $X={\rm Spa}(R[\frac{1}{p}],R)$ and $U=``\varprojlim_i"{\rm Spa}(R_i[\frac{1}{p}],R_i)\to X$ be the usual pro-\'etale covering of $X$, where $R_i=R\otimes_{\square, A/I\langle \underline T^{\pm 1}\rangle}A/I\langle \underline T^{\pm \frac{1}{p^i}}\rangle$.
	
	\begin{thm}
		For any prismatic $F$-crystal $\calM$ over $\calO_{\Prism}[\frac{1}{I}]^{\wedge}_p$ with $\calE$ the corresponding $W({\hat \calO}_{X^{\flat}})$-module, there is a quasi-isomorphism
		\[
		R\Gamma((R/A)_{\Prism}^{\rm perf},\calM)\simeq R\Gamma(X_{\rm pro\acute et},\calE).
		\]
	\end{thm}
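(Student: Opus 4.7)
The plan is to compute both sides by \v Cech cohomology with respect to a common affinoid perfectoid cover, and then identify the resulting cosimplicial objects term by term using the tilting equivalence.

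First, on the prismatic side, the perfection $(\Ainf(R_\infty),(d))$ is a cover of the final object of $\mathrm{Shv}((R/A)_{\Prism}^{\perf})$, and by Proposition~\ref{Infinity cech nerve} we have
\[
R\Gamma((R/A)_{\Prism}^{\perf},\calM)\;\simeq\;R\varprojlim_i\,\Gamma((\Ainf(R_\infty)^i,(d)),\calM),
\]
where $(\Ainf(R_\infty)^i,(d))$ is the $(i{+}1)$-fold self-product in $(R/A)_{\Prism}^{\perf}$. By Lemma~\ref{Lem-Perfect special cover} (and the discussion around it), these self-products are perfect prisms of the form $(\Ainf(S_\infty^i),(d))$, where $S_\infty^i$ is the $(i{+}1)$-fold $p$-completed tensor product of $R_\infty$ over $R$ in the category of perfectoid $R$-algebras.

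Next, on the pro-\'etale side, $U_\infty\to X$ is a pro-\'etale cover by an affinoid perfectoid object, and its $(i{+}1)$-fold fibre product $U_\infty^i$ over $X$ in $X_{\proet}$ is again affinoid perfectoid, corresponding precisely to $(S_\infty^i[\frac{1}{p}],S_\infty^i)$ (at least up to the standard almost-isomorphism with the integral closure). By \cite[Corollary~9.3.8]{KL}, for any \'etale $W(\widehat\calO_{X^\flat})$-module $\calE$ and any affinoid perfectoid $V$ one has $\calE|_V\simeq \calE(V)\otimes_{W(\widehat\calO_{X^\flat})(V)}W(\widehat\calO_{X^\flat})|_V$; combined with the almost-vanishing of higher pro-\'etale cohomology of $W(\widehat\calO_{X^\flat})$ on affinoid perfectoid objects (which gives genuine vanishing on $W(\widehat\calO_{X^\flat})$-modules that are finite projective locally on such a cover), this yields
\[
R\Gamma(X_{\proet},\calE)\;\simeq\;R\varprojlim_i\,\calE(U_\infty^i).
\]

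Finally, by the construction of the functor $\alpha$ in Section~\ref{correspondence}, for each $i$ we have by definition
\[
\calE(U_\infty^i)\;=\;\calM\!\left(W((S_\infty^{i,+})^\flat),(\ker\theta)\right)\;=\;\Gamma((\Ainf(R_\infty)^i,(d)),\calM),
\]
and these identifications are compatible with the cosimplicial structure maps (the face and degeneracy maps in both systems are induced by the universal property of $S_\infty^{\bullet}$ as a \v Cech nerve of $R_\infty$ in the category of perfectoid $R$-algebras). Passing to $R\varprojlim_i$ gives the desired quasi-isomorphism.

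The main obstacle is the second step: one must justify that the higher pro-\'etale cohomology of $\calE$ vanishes on affinoid perfectoid objects, so that the pro-\'etale cohomology is really computed by the naive \v Cech cosimplicial object associated to $U_\infty\to X$. This reduces, via the local description $\calE|_V\simeq\calE(V)\otimes W(\widehat\calO_{X^\flat})|_V$, to the analogous almost-vanishing statement for the sheaf $W(\widehat\calO_{X^\flat})$ itself, which is standard after tilting and inverting $p$; the passage back to the integral level requires a small argument using that $\calE$ is locally finite projective over $W(\widehat\calO_{X^\flat})$ and $p$-complete, so that all the $\varprojlim^1$ terms that appear along the way vanish.
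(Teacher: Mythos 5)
Your overall strategy coincides with the paper's: compute the perfect-prismatic side by the \v{C}ech--Alexander complex of the cover $(\Ainf(R_\infty),(d))$ (Proposition \ref{Infinity cech nerve}), compute the pro-\'etale side by the \v{C}ech complex of the corresponding affinoid perfectoid cover, and identify the two cosimplicial objects term by term. The step you single out as the main obstacle --- the degeneration $R\Gamma(U^i,\calE)\simeq\Gamma(U^i,\calE)$ on affinoid perfectoid objects --- is not where the difficulty lies: the paper gets it directly from \cite[Lemma 9.3.4]{KL} together with the crystal property $\calE|_{U^i}\simeq\calE(U^i)\otimes_{W(\widehat{\calO}_{X^\flat})(U^i)}W(\widehat{\calO}_{X^\flat})|_{U^i}$, and your sketch of it is fine.

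The genuine gap is in the step you declare to hold ``by definition'', namely $\calE(U_\infty^i)=\calM(W((S_\infty^{i,+})^\flat),(\ker\theta))=\Gamma((\Ainf(R_\infty)^i,(d)),\calM)$. The construction of $\alpha$ only gives the first equality, where $S_\infty^{i,+}$ is the perfectoid ring attached to the $(i+1)$-fold fibre product of $U_\infty$ in $X_{\proet}$; that ring is a completed tensor product of Huber pairs whose ring of integral elements is an integral closure, and it is not a priori the self-coproduct $R_\infty^i=\Ainf(R_\infty)^i/(d)$ of $R_\infty$ in perfectoid $R$-algebras, i.e.\ the reduction of the prismatic self-product. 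Matching the two is precisely the content of the paper's proof: one shows $\hat U^i\simeq \hat U\times\underline{\Gamma}^i$ using that $\hat U^{\diamond}\to X^{\diamond}$ is a torsor, checks via the universal property that $\Spa(R_\infty^i[\frac{1}{p}],R_\infty^i[\frac{1}{p}]^{+})$ is the self-product in affinoid perfectoid spaces over $X$ (equivalently in diamonds), and finally observes that the discrepancy between $R_\infty^i$ and $R_\infty^i[\frac{1}{p}]^{+}$ is an almost isomorphism which disappears after applying $W((-)^\flat)[\frac{1}{d}]^{\wedge}_p$, so the evaluations of $\calM$ agree (compare the argument of Lemma \ref{Lem-Describe of perfect cover}). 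Without some version of this, your phrase ``corresponding precisely to $(S_\infty^i[\frac{1}{p}],S_\infty^i)$'' is an assertion rather than a proof, and the claimed compatibility of cosimplicial structure maps rests on it. A smaller point: you should fix one tower --- you take $U_\infty$ attached to $\widehat{\overline R}$ as in Remark \ref{Rmk-General groups}, but your $S_\infty^{\bullet}$ is built from the toric tower $R_\infty$; the paper's proof of this theorem uses the toric tower throughout, and mixing the two makes the identification of \v{C}ech nerves ill-posed as stated.
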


	\begin{proof}
		Let $(\Ainf(R_{\infty}),(d)))$ be the usual perfect prism which is a cover of the final object of the topos ${\rm Shv}((R/A)_{\Prism}^{\rm perf})$ by Lemma \ref{Lem-Special cover}. Then by Proposition \ref{Infinity cech nerve}, we have
		\[
		R\Gamma((R/A)_{\Prism}^{\rm perf},\calM)\simeq R\varprojlim_i\Gamma((\Ainf(R_{\infty})^i,(d))),\calM).
		\]
		
		On the other hand, we also have
		\[
		R\Gamma((X_{\rm pro\acute et},\calE)\simeq R\varprojlim_iR\Gamma(U^i,\calE).
		\]
		where $U^i$ is the $(i+1)$-fold self-product of $U$ in $X_{\rm pro\acute et}$. By \cite[Lemma 9.3.4]{KL} and the fact that $\calE|_{U^i}=\calE(U^i)\otimes_{W({\hat \calO}_{X^{\flat}})(U^i)}W({\hat \calO}_{X^{\flat}})|_{U^i}$, we have $R\Gamma(U^i,\calE)\simeq \Gamma (U^i,\calE)$. Then this theorem follows from $\Gamma((\Ainf(R_{\infty})^i,(d)),\calM)\cong \Gamma (U^i,\calE)$, which is proved in the next paragraph.
		
		On one hand, let $\hat U^i$ be the affinoid perfectoid space associated with $U^i$. Then $\hat U^i=\hat U\times \underline{\bZ_p}^i$. On the other hand, as the subcategory of perfect prisms in $(R)_{\Prism}$ is equivalent to the category ${\rm Perfd}_R$ of perfectoid rings over $R$, we see that $R_{\infty}^i:=A_{\inf}(R_{\infty})^i/(d)$ is isomorphic to the $(i+1)$-fold self-product of $R_{\infty}$ in the category ${\rm Perfd}_R$. By checking the universal property, we see that ${\rm Spa}(R_{\infty}^i[\frac{1}{p}],R_{\infty}^i[\frac{1}{p}]^{+})$ is  the $(i+1)$-fold self-product of ${\rm Spa}(R_{\infty}[\frac{1}{p}],R_{\infty})$ in the category ${\rm Perfd}_X$ of affinoid perfectoid spaces over $X$, which is equivalent to the category ${\rm Perfd}_{X^{\diamond}}$. Here $R_{\infty}^i[\frac{1}{p}]^{+}$ is the $p$-adic completion of the integral closure of the image of $S_{\infty}^i$ in $R_{\infty}^i[\frac{1}{p}]$.
		Since ${\rm Spa}(R_{\infty}[\frac{1}{p}],R_{\infty})^{\diamond}$ is a $\underline {\bZ_p}$-torsor over ${\rm Spa}(R[\frac{1}{p}],R)^{\diamond}$, we see that the $(i+1)$-fold fiber product ${\rm Spa}(R_{\infty}[\frac{1}{p}],R_{\infty})^{\diamond}\times_{{\rm Spa}(R[\frac{1}{p}],R)^{\diamond}}\times\cdots\times_{{\rm Spa}(R[\frac{1}{p}],R)^{\diamond}}{\rm Spa}(R_{\infty}[\frac{1}{p}],R_{\infty})^{\diamond}$ is representable by an affinoid perfectoid space ${\rm Spa}(R_{\infty}[\frac{1}{p}],R_{\infty})^{\flat}\times \underline{\bZ_p}^i$. This implies $\hat U^i\simeq {\rm Spa}(R_{\infty}^i[\frac{1}{p}],R_{\infty}^i[\frac{1}{p}]^{+})$.  Since $W((R^i_{\infty})^{\flat})[\frac{1}{d}]^{\wedge}_{p}\cong W((R_{\infty}^i[\frac{1}{p}]^{\flat})\cong W((R_{\infty}^i[\frac{1}{p}]^{+}))^{\flat})[\frac{1}{d}]^{\wedge}_{p}$, we have $\Gamma((\Ainf(R_{\infty})^i,(d)),\calM)\cong \Gamma (U^i,\calE)$.
	\end{proof}

	\begin{thm}
		There is a natural quasi-isomorphism
		\[
		R\Gamma((R/A)_{\Prism},\calM)^{\varphi=1}\simeq R\Gamma({\rm Spa}(R[\frac{1}{p}],R),\calL)
		\]
		where $\calL$ is the \'etale $\bZ_p$-local system associated with the prismatic $F$-crystal $\calM$.
	\end{thm}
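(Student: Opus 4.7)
The plan is to chain together the two theorems proved just above and then pass from the pro-\'etale $\varphi$-module cohomology to the \'etale cohomology of $\calL$ via an Artin--Schreier type fundamental sequence.

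First, I would apply the theorem $R\Gamma((R/A)_{\Prism},\calM)^{\varphi=1}\simeq R\Gamma((R/A)_{\Prism}^{\perf},\calM)^{\varphi=1}$ directly to identify the left-hand side with the $\varphi$-invariant part of the perfect prismatic cohomology. Next, I would apply the theorem relating perfect prismatic cohomology to pro-\'etale cohomology, $R\Gamma((R/A)_{\Prism}^{\perf},\calM)\simeq R\Gamma(X_{\proet},\calE)$, where $\calE$ is the \'etale $\varphi$-module over $W(\widehat\calO_{X^\flat})$ corresponding to $\calM$ under the equivalence of Section~\ref{correspondence}. The construction of this equivalence (via evaluation on affinoid perfectoids in the Kedlaya--Liu setup) is manifestly $\varphi$-equivariant, and so is the quasi-isomorphism; in particular it induces a quasi-isomorphism on $\varphi$-invariants, so the problem reduces to producing a natural quasi-isomorphism
\[
R\Gamma(X_{\proet},\calE)^{\varphi=1}\simeq R\Gamma(X_{\et},\calL).
\]

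The heart of the argument, and the step I expect to require the most care, is the short exact sequence
\[
0\longrightarrow \calL\longrightarrow \calE\xrightarrow{\;\varphi-1\;} \calE\longrightarrow 0
\]
of sheaves on $X_{\proet}$. Exactness of $\varphi-1$ on the right is the crucial non-formal point: it can be checked locally on affinoid perfectoid objects $V=\Spa(S,S^+)$ in $X_{\proet}$, where $\calE(V)$ is a finite projective $W(S^{\flat})[\tfrac1p]$-module (when inverting $p$) and the statement reduces, via a standard d\'evissage, to the surjectivity of $\varphi-1$ on $W(S^{\flat})/p$, i.e.\ to the Artin--Schreier surjectivity on the perfect $\bF_p$-algebra $S^{\flat}/p$. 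The identification $\calL=\calE^{\varphi=1}$ is precisely the inverse of the Kedlaya--Liu equivalence \cite[Theorem~9.3.7]{KL}. Granting the short exact sequence, applying $R\Gamma(X_{\proet},-)$ yields a distinguished triangle
\[
R\Gamma(X_{\proet},\calL)\longrightarrow R\Gamma(X_{\proet},\calE)\xrightarrow{\;\varphi-1\;} R\Gamma(X_{\proet},\calE),
\]
so that $R\Gamma(X_{\proet},\calE)^{\varphi=1}\simeq R\Gamma(X_{\proet},\calL)$ by definition of the $\varphi=1$ fiber used throughout this section.

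Finally, since $\calL$ is an \'etale $\bZ_p$-local system, a standard comparison (the pro-\'etale to \'etale comparison for $\bZ_p$-local systems, via writing $\calL=\Rlim \calL/p^n$ with each $\calL/p^n$ a lisse $\bZ/p^n$-sheaf) gives $R\Gamma(X_{\proet},\calL)\simeq R\Gamma(X_{\et},\calL)\simeq R\Gamma(\Spa(R[\tfrac1p],R),\calL)$. Composing the quasi-isomorphisms from the three steps produces the desired natural quasi-isomorphism, and naturality is automatic from the constructions. The main obstacle is the right exactness of $\varphi-1$ on $\calE$; once this is in place the rest of the proof is a formal assembly of the already established ingredients.
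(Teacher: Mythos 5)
Your assembly of the two preceding theorems with the Artin--Schreier sequence $0\to\calL\to\calE\xrightarrow{\varphi-1}\calE\to 0$ on $X_{\proet}$ is in substance the same skeleton the paper uses (the paper simply quotes the naturality of $R\Gamma(X_{\proet},\calE)^{\varphi=1}\simeq R\Gamma(X_{\et},\calL)$ from Kedlaya--Liu rather than re-deriving it). But the one thing you dismiss in a half-sentence --- ``naturality is automatic from the constructions'' --- is precisely the content of this theorem and the part your argument does not deliver. The quasi-isomorphisms you compose were both produced from a chosen framing $\square$: the comparison $R\Gamma((R/A)_{\Prism},\calM)^{\varphi=1}\simeq R\Gamma((R/A)_{\Prism}^{\perf},\calM)^{\varphi=1}$ was proved via the \v{C}ech--Alexander bicomplexes of $(A^{\square}(R),(d))$ and its perfection, and the identification $R\Gamma((R/A)_{\Prism}^{\perf},\calM)\simeq R\Gamma(X_{\proet},\calE)$ was proved by matching the \v{C}ech complex of $(\Ainf(R_{\infty}),(d))$ with that of the framing-dependent pro-\'etale cover $U\to X$. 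A priori the resulting composite depends on $\square$ and is not functorial in $R$, and functoriality is exactly what is needed to glue these local statements into Theorem \ref{global etale}. The paper's proof is devoted to removing this dependence: it forms the filtered colimit over the index set $J$ of finite sets of framings of the complexes $\Gamma((\Ainf(R_{\infty}^{\square_S})^{\bullet},(d)),\calM)$, obtaining a framing-independent model that receives compatible quasi-isomorphisms from both sides. Your proposal would be complete only if you either reproduce such a device or construct a genuinely canonical comparison map (for instance via a morphism of sites from the affinoid perfectoid basis of $X_{\proet}$ to $(R/A)^{\perf}_{\Prism}$, in the spirit of the functor $\alpha$ of Section \ref{correspondence}) and show it induces the quasi-isomorphisms above; asserting naturality is not enough.

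A smaller point: surjectivity of $\varphi-1$ on $\calE$ is a statement about sheaves on $X_{\proet}$, not about sections over each affinoid perfectoid $V=\Spa(S,S^+)$. On a fixed perfect ring the Artin--Schreier map need not be surjective (already $\varphi-1$ on $W(\bF_p)$ is not), so the correct local check is that a given section of $\calE(V)$ becomes a $(\varphi-1)$-image after a further (finite \'etale, hence pro-\'etale) cover of $V$ obtained by adjoining solutions of Artin--Schreier--Witt equations. With that reading your sketch of the fundamental exact sequence is correct, and the final pro-\'etale versus \'etale comparison for the $\bZ_p$-local system $\calL$ is standard as you say.
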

	\begin{proof}
		For each  framing $\square: A/I\langle \underline T^{\pm 1}\rangle\to R$, we have quasi-isomorphisms
		\[
		R \Gamma((R/A)_{\Prism}^{\rm perf},\calM)\simeq  \Gamma((\Ainf(R^{\square}_{\infty})^{\bullet },(d)),\calM)\simeq R\Gamma(X_{\rm pro\acute et},\calE)
		\]
		
		Consider the following index set
		\[
		J=\{S\mid S\ {\rm is\ a\ finite\ set \ of \ framings \ of }\ R\}.
		\]
		
		For each $S=\{ \square_1,\cdots, \square_n\}$, we set
		\[\
		(\Ainf(R_{\infty}^{\square_S}),(d))=(\Ainf(R_{\infty}^{\square_1}),(d))\times \cdots\times (\Ainf(R_{\infty}^{\square_n}),(d))
		\]
		in $(R/A)_{\Prism}^{\rm perf}$.
		
		Since $(\Ainf(R_{\infty}^{\square_S}),(d))$ also covers the final object of ${\rm Shv}((R/A)_{\Prism})$, we have
		\[
		R \Gamma((R/A)_{\Prism}^{\rm perf},\calM)\simeq  \Gamma((\Ainf(R^{\square_S}_{\infty})^{\bullet},(d)),\calM)\simeq R\Gamma(X_{\rm pro\acute et},\calE).
		\]
		If $S_1\subset S_2$, then $\Gamma((\Ainf(R^{\square_{S_1}}_{\infty})^{\bullet},(d)),\calM)\simeq \Gamma((\Ainf(R^{\square_{S_2}}_{\infty})^{ \bullet},(d)),\calM)$. Now we consider the filtered colimit
		\[
		M:=\varinjlim_{S\in J}\Gamma((\Ainf(R^{\square_S}_{\infty})^{\bullet},(d)),\calM)
		\]
		which then does not depend on any framing. Then we get natural quasi-isomorphisms
		\[
		R \Gamma((R/A)_{\Prism}^{\rm perf},\calM)\simeq  M\simeq R\Gamma(X_{\rm pro\acute et},\calE).
		\]
		
		Since the quasi-isomorphsims $R\Gamma((R/A)_{\Prism},\calM)^{\varphi=1}\simeq R\Gamma((R/A)_{\Prism}^{\rm perf},\calM)^{\varphi=1}$ and $R\Gamma(X_{\rm pro\acute et},\calE)^{\varphi=1}\simeq R\Gamma({\rm Spa}(R[\frac{1}{p}],R),\calL)$ are already natural, we get the desired natural quasi-isomorphism
		\[
		R\Gamma((R/A)_{\Prism},\calM)^{\varphi=1}\simeq R\Gamma({\rm Spa}(R[\frac{1}{p}],R),\calL).
		\]
	\end{proof}
	
	Now the Theorem \ref{global etale} is just the globalization of the affine case via these natural quasi-isomorphisms.
	
	\begin{rmk}
		Our proof can not apply to the case of arbitrary $p$-adic formal scheme as in \cite{BS}. In fact, our local arguments highly depend on a good cover of the final object of the topos ${\rm Shv((\frakX/A)_{\Prism})}$, which enables us to use $\rm\check{C}$ech-Alexander complex to calculate the prismatic cohomology of crystals. One might also try to study arc-descent results in the non-constant coefficient  case.
	\end{rmk}

	By the same arguments, one can also deduce the following theorem.
	\begin{thm}\label{O_K}
	    	Let $\frakX$ be a separated $p$-adic smooth formal scheme over $\calO_K$ and $\calM$ be a prismatic $F$-crystal over $\calO_{\Prism}[\frac{1}{I}]^{\wedge}_p$ on $\frakX_{\Prism}$ with corresponding $\Zp$-local system $\calL$ on $X_{\rm \acute et}$. Then there is a quasi-isomorphism
		\[
		R\Gamma((\frakX)_{\Prism}, \calM)^{\varphi=1}\simeq R\Gamma(X_{\rm \acute et}, \calL).
		\]
	\end{thm}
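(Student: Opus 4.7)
The plan is to mirror the proof of Theorem \ref{global etale}, replacing the geometric cover $(\Ainf(R_\infty), (\xi))$ by the universal cover $(\Ainf(\widehat{\overline R}), (\xi))$ from Remark \ref{Rmk-General groups}, which records both the arithmetic extension $\calO_K \subset \calO_C$ and the Kummer tower through the full profinite group $G = \pi_1^{\et}(\Spa(R[\frac{1}{p}], R))$.

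First, I would reduce to the affine case $\frakX = \Spf(R)$ equipped with a framing $\square : \calO_K\langle \underline T^{\pm 1}\rangle \to R$ by Zariski descent of both sides. Adapting the proof of Lemma \ref{Lem-Special cover}, using that $\widehat{\overline R}$ is a perfectoid, quasi-syntomic cover of $R$ together with \cite[Proposition 7.11]{BS} and deformation theory, shows that $(\Ainf(\widehat{\overline R}), (\xi))$ covers the final object of both $\rm Shv((R)_{\Prism})$ and $\rm Shv((R)^{\rm perf}_{\Prism})$. Propositions \ref{Cech} and \ref{Infinity cech nerve} then express the two sides of the desired $\varphi=1$ comparison as the limits of the \v{C}ech-Alexander complexes of this cover in the two sites, the transversality of its self-products needed for Lemma \ref{vanish} being supplied by \cite[Proposition 3.13]{BS}.

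Next, the comparison
\[
R\Gamma((R)_{\Prism}, \calM)^{\varphi=1} \simeq R\Gamma((R)^{\rm perf}_{\Prism}, \calM)^{\varphi=1}
\]
follows from the same double-complex spectral sequence argument used earlier in this section; Remark \ref{Rmk-Preserve extension} together with Lemma \ref{Lem-Cohomology of phi modules} shows that base change from each self-product in the bounded site to its perfection induces termwise isomorphisms on both $\varphi$-invariants and $\varphi$-coinvariants. By the same computation as in the proof of Theorem \ref{global etale} relating perfect prismatic and pro-\'etale cohomology, the $(i+1)$-fold self-products of $(\Ainf(\widehat{\overline R}), (\xi))$ in the perfect site tilt to affinoid perfectoid spaces representing the $(i+1)$-fold fiber products of the pro-\'etale cover $U \to X$ attached to $\widehat{\overline R}$, which yields $R\Gamma((R)^{\rm perf}_{\Prism}, \calM) \simeq R\Gamma(X_{\rm pro\acute et}, \calE)$. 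Taking $\varphi=1$ and invoking \cite[Theorem 9.3.7]{KL} produces $R\Gamma(X_{\rm \acute et}, \calL)$, and the filtered-colimit-over-framings device at the end of the proof of Theorem \ref{global etale} together with Zariski gluing yields the global statement.

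The main obstacle I expect is the analogue of Lemma \ref{Lem-Describe of perfect cover} when $\Gamma \cong \bZ_p^d$ is replaced by the possibly non-abelian profinite group $G$: one must identify the reduction modulo $\xi$ of the self-products of $\Ainf(\widehat{\overline R})$ with rings of continuous functions on $G^\bullet$. The diamond-theoretic argument via \cite[Example 11.12]{Sch-a} and \cite[Proposition 10.2.3]{SW} only uses the torsor structure under a profinite group and should carry over with minimal changes, but some care is needed with continuous cochain conventions in the non-abelian setting.
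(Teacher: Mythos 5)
Your proposal is essentially the paper's own proof: the paper establishes Theorem \ref{O_K} by invoking verbatim the argument of Theorem \ref{global etale}, and the adaptation you spell out --- replacing the framed cover by the canonical perfectoid cover $(\Ainf(\widehat{\overline R}),(\xi))$ with the group $G$ as in Remark \ref{Rmk-General groups} and Corollary \ref{descend equivalence}, then running the \v{C}ech--Alexander, $\varphi$-(co)invariants, and pro-\'etale comparison steps --- is exactly the intended one. There is no genuine deviation or gap beyond the level of detail the paper itself leaves implicit.
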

	
	\begin{proof}
	The proof is the same as that of Theorem \ref{global etale}.
	\end{proof}
	
	


\begin{thebibliography}{99}
		\bibitem[BMS18]{BMS-a} B. Bhatt, M. Morrow and P. Scholze: {\it Integral $p$-adic Hodge theory}, Publ. math. de l'IH\'ES 128, Issue 1, pp. 219-395, (2018).https://doi.org/10.1007/s10240-019-00102-z.
		
		\bibitem[BMS19]{BMS-b} B. Bhatt, M. Morrow and P. Scholze: {\it Topological Hochschild homology and integral $p$-adic Hodge theory}, Publ. math. de l'IH\'ES 129, Issue 1, pp. 199-310, (2019). https://doi.org/10.1007/s10240-019-00106-9.
		
		\bibitem[BS19]{BS} B. Bhatt and P. Scholze: {\it Prisms and prismatic cohomology}, arXiv:1905.08229v3, (2019).
		
		
		\bibitem[BS21]{BS1} B. Bhatt and P. Scholze: {\it Prismatic $ F $-crystals and crystalline Galois representations}, arXiv:2106.14735, (2021).
		
			\bibitem[FF18]{FF}L. Fargues and J.M. Fontaine:{\it Courbes et fibrés vectoriels en théorie de Hodge $p$-adique}		, Ast\'erisque 406, (2018).
		
		\bibitem[KL15]{KL} K.S. Kedlaya and R. Liu: {\it Relative $p$-adic Hodge theory: Foundations}, Ast\'erisque 371, (2015).
			\bibitem[Moon18]{Moon}Y. S, Moon: {\it Relative crystalline representations and weakly admissible modules},  arXiv:1806.00867,(2018).
		

		
		\bibitem[MT20]{MT} M. Morrow and T. Tsuji: {\it Generalised representations as $q$-connections in integral $p$-adic Hodge theory}, arXiv:2010.04059v1, (2020).
		
		\bibitem[Sch17]{Sch-a} P. Scholze: {\it \'Etale cohomology of diamonds}, arXiv:1709.07343v2, (2017).
		
		\bibitem[SW]{SW} P. Scholze and J. Weinstein: {\it Berkeley lectures on $p$-adic geometry}, Princeton University Press, Princeton, NJ, (2020).
		
		\bibitem[Tian21]{Tian} Y. Tian: {\it Finiteness and duality for the cohomology of prismatic crystals}, arXiv:2109.00801v1, (2021).
		
		\bibitem[Wu21]{Wu} Z. Wu: {\it Galois representations, $(\varphi,\Gamma)$-modules and prismatic $F$-crystals}, arXiv:2104.12105v3, (2021).
		
	
		
	
	\end{thebibliography}
\end{document}